\documentclass[a4paper, 10pt]{article}
\usepackage[utf8]{inputenc}
\usepackage[left=20 mm, bottom=20 mm, right=20 mm, top=20 mm]{geometry}
\usepackage{amsmath, amssymb, amsthm, csquotes, tikz, float}
\usepackage{hyperref}

\usepackage[english]{babel}
\usepackage{graphicx,epstopdf,epsfig}
\usepackage{amsfonts,epsfig,fancyhdr, graphics, hyperref,amsmath, tikz,amssymb}
\usepackage{geometry}
\usepackage{tikz}
\usetikzlibrary{decorations.markings}
\geometry{a4paper, total={170mm,257mm}, left=20mm, top=20mm}
\parskip=.2cm

\setlength{\textheight}{210mm}
\setlength{\textwidth}{165mm}
\topmargin = -10mm

\setlength{\parskip}{.1in}






\newtheorem{theorem}{Theorem}[section]
\newtheorem{lemma}{Lemma}[section]
\newtheorem{remark}{Remark}[section]

\newtheorem{defn}{Definition}[section]

\newtheorem{eg}{Example}[section]
\numberwithin{equation}{section}



\newcommand{\Names}{Partha Rana, Sriparna Bandopadhyay}
\newcommand{\Title}{On the Consistency of Combinatorially Symmetric Sign Patterns and the Class of $2$-Consistent Sign Patterns}

\def\det{{\rm det}}
\def\sgn{{\rm sgn}}
\def\In{{\rm In}}
\def\Ch{{\rm Ch}}
\def\min{{\rm min}}
\def\max{{\rm max}}

\def\dist{{\rm dist}}
\def\rank{{\rm rank}}

	\bibliographystyle{plain}
	
	\setcounter{page}{1}
	
	\thispagestyle{empty}
\begin{document}	
	\title{\Title\thanks{
			Corresponding Author: Partha Rana.}
	\author{Partha\ Rana\thanks{Department of Mathematics, Indian Institute of Technology Guwahati, Guwahati,
			Assam 781039, India.\\
            E-mail addresses: r.partha@iitg.ac.in (P. Rana), sriparna@iitg.ac.in (S. Bandopadhyay).}
\and Sriparna Bandopadhyay .}
}

	\markboth{\Names}{\Title}

\date{}

\maketitle

\begin{abstract}
  A sign pattern is a matrix that has entries from the set $\{+,-,0\}$. An $n\times n$ sign pattern $\mathcal{P}$ is called consistent if every real matrix in its qualitative class has exactly $k$ real eigenvalues and $n-k$ nonreal eigenvalues for some integer $k$, with $1\leq k\leq n$. 
  In [C. A. Eschenbach, F. J. Hall, and Z. Li. Eigenvalue frequency and consistent sign pattern matrices. \textit{Czechoslovak Math. J.}, 44(119):461–479, 1994.], the authors 
  established a necessary condition for irreducible, tridiagonal patterns with a $0$-diagonal to be consistent.  Subsequently, they proposed that this condition is also sufficient for such patterns to be consistent. In this article, we first demonstrate that this proposition does not hold.
    We characterize all irreducible, tridiagonal sign patterns with a $0$-diagonal of order at most five that are consistent. Moreover, we establish useful necessary conditions for irreducible, combinatorially symmetric sign patterns to be consistent. 
  Finally, we introduce the class $\Delta$ of all $2$-consistent sign patterns and provide several necessary conditions for sign patterns to belong to this class.
\end{abstract}

\noindent {\bf Key words:} Sign pattern, Consistency, Tridiagonal sign pattern.

\noindent {\bf AMS Subject Classification:} 05C50, 15A18, 15B35.
	\maketitle



\section{Introduction}
	A matrix whose entries belong to the set $\{+, -, 0\}$ is called a 
	\textit{sign pattern matrix} (or simply, a \textit{sign pattern} or a 
	\textit{pattern}). The set of all $n\times n$ sign patterns is denoted 
	by $\mathcal{Q}_n$. For $\mathcal{P} = [p_{ij}] \in \mathcal{Q}_n$, the set  
	$$
	\mathcal{Q}(\mathcal{P}) =
	\left\{
	A = [a_{ij}] \in \mathbb{R}^{n\times n} \ \middle| \ 
	\sgn(a_{ij}) = p_{ij} \ \text{for all} \ i,j = 1, 2, \dots, n
	\right\}
	$$
	is called the \textit{qualitative class} of $\mathcal{P}$.
	
	Suppose $P$ is a property that a real matrix may or may not satisfy. 
	A sign pattern $\mathcal{P}$ is said to \textit{require} $P$ if every real matrix in $\mathcal{Q}(\mathcal{P})$ has property $P$ and to \textit{allow} $P$ if some real matrix in $\mathcal{Q}(\mathcal{P})$ has property $P$. 
	A sign pattern $\mathcal{P} \in \mathcal{Q}_n$ is called \textit{sign nonsingular} if it requires nonsingularity. 
	Equivalently, $\mathcal{P}$ is sign nonsingular if in the standard expansion of $\det(\mathcal{P})$, there exists at least one nonzero term, and all nonzero terms have the same sign. If $\mathcal{P}$ requires singularity, that is, all the terms in the expansion of $\det(\mathcal{P})$ are equal to zero, then $\mathcal{P}$ is said to be \textit{sign singular}.
	
	Suppose that $A$ is an $n\times n$ real matrix, define the \textit{eigenvalue frequency} of $A$ as the ordered pair $S_A = (i_r(A), i_c(A))$, where $i_r(A)$ is the number of real eigenvalues of $A$ 
	and $i_c(A)=n-i_r(A)$ is the number of nonreal eigenvalues of $A$.  A sign pattern $\mathcal{P} \in \mathcal{Q}_n$ is said to be 
	\textit{$k$-consistent}, for some integer $k$ with $0 \leq k \leq n$, if $i_r(A)=k$ for all $A \in \mathcal{Q}(\mathcal{P})$.  
	Henceforth, if $\mathcal{P}$ is $k$-consistent, we denote its eigenvalue frequency by $S_{\mathcal{P}} = (k, n-k)$.  
	A sign pattern $\mathcal{P}$ is called \textit{consistent} if it is 
	$k$-consistent for some $k$ with $0 \leq k \leq n$.

	The \textit{inertia} of an $n \times n$ real matrix $A$, denoted by $\In(A)$, is the triple $\In(A) = (i_+(A), i_-(A), i_0(A))$, where $i_+(A)$, $i_-(A)$, and $i_0(A)$ denote the number of eigenvalues of $A$ with positive, negative, and zero real parts, respectively.
    For a sign pattern $\mathcal{P} \in \mathcal{Q}_n$, the inertia of $\mathcal{P}$ is $\In(\mathcal{P}) = \{\In(A) \mid A \in \mathcal{Q}(\mathcal{P})\}$. 
    If $\In(\mathcal{P})$ contains only one element, then $\mathcal{P}$ is said to \textit{require a unique inertia}.
	
	Two sign patterns $\mathcal{P}_1$ and $\mathcal{P}_2$ are said to be \textit{equivalent} if one can be obtained from the other through a combination of permutation similarity, signature similarity, negation, and transposition. Equivalence preserves certain properties; in particular, $\mathcal{P}_1$ requires a unique inertia (respectively, is consistent) if and only if $\mathcal{P}_2$ requires a unique inertia (respectively, is consistent).

	
	Let $A$ be an $n \times n$ matrix. For subsets $\alpha, \beta \subseteq \{1,2,\ldots,n\}$, $A[\alpha,\beta]$ denotes the submatrix of $A$ consisting of rows, columns corresponding to the indices in $\alpha, \beta$, respectively. In particular, $A[\alpha]$ stands for the principal submatrix $A[\alpha,\alpha]$.

	The \textit{signed directed graph} $D = (V, E)$ of $\mathcal{P} = [p_{ij}] \in \mathcal{Q}_n$ is the directed graph with vertex set $V = \{1, 2, \dots, n\}$ and directed edge $(i, j) \in E$ if and only if $p_{ij} \neq 0$. The directed edge $(i,j)$ is associated with a $+$ if $p_{ij}=+$ and associated with a $-$ if $p_{ij}=-$. 
    A product of the form $P = p_{i_1 i_2} p_{i_2 i_3}\cdots p_{i_k i_{k+1}},$ 
where each factor is nonzero and the indices $i_1,i_2,\dots,i_{k+1}$ are all distinct is called a \textit{path} of length $l(P)=k$ from $i_1$ to $i_{k+1}$.
	If $i_1,i_2,\dots,i_{k}$ are all distinct, then $\gamma=p_{i_1i_2}p_{i_2i_3}\cdots p_{i_ki_1}$ is called a \textit{simple cycle}, denoted by $\gamma=(i_1,i_2,...,i_k)$, of length $l(\gamma)=k$.  For $k=1$, the simple cycle $\gamma=p_{i_1i_1}$ is called a \textit{loop}. The simple cycle $\gamma$ is said to be \textit{positive} (respectively, \textit{negative}) if
	$\sgn(\gamma)=(-1)^{k-1} \, p_{i_1 i_2} p_{i_2 i_3} \cdots p_{i_k i_1}$ is positive (respectively, negative). 
    {Therefore, if $l(\gamma)$ is even and the product $p_{i_1i_2}p_{i_2i_3}\cdots p_{i_ki_1}$ is positive (respectively, negative), then $\gamma$ is a negative (respectively, positive) simple cycle. If $l(\gamma)$ is odd and the product $p_{i_1i_2}p_{i_2i_3}\cdots p_{i_ki_1}$ is positive (respectively, negative), then $\gamma$ is a positive (respectively, negative) simple cycle.} 
Suppose that $\gamma_i$'s are mutually vertex disjoint simple cycles with $l(\gamma_i)=k_i$, for $i=1,2,...,t$, then $\Gamma=\gamma_1\gamma_2\cdots\gamma_t$ is called a \textit{composite cycle} of length $l(\Gamma)=\sum_{i=1}^{t}k_i$ and  $\sgn(\Gamma)=\prod_{i=1}^t\sgn(\gamma_i)$.
{Throughout this paper, we assume all the cycles to be simple unless otherwise mentioned.}
	Suppose that $\Gamma_1 = \gamma_1\gamma_2\cdots\gamma_t$, $\Gamma_2$ be two composite cycles in $D$. We define
$$
\Gamma_1 \setminus \Gamma_2 \;=\; \prod \{\, \gamma_i : \gamma_i \in \Gamma_1 \ \text{and} \ \gamma_i \notin \Gamma_2 \},
$$
where each $\gamma_i\in\Gamma_1$ is a simple cycle in $D$.  
Let $S\subseteq V$, then $D \setminus S$ denotes the subgraph of $D$ obtained by deleting the vertices in $S$ together with all the incident edges, {and $D(S)$ denotes the subgraph of $D$ induced by $S$, that is, the subgraph of $D$ with vertex set $S$, together with all the incident edges}.

	A sign pattern $\mathcal{P} = [p_{ij}] \in \mathcal{Q}_n$ is said to be \textit{combinatorially symmetric} if $p_{ij} \neq 0$ if and only if $p_{ji} \neq 0$. The \textit{underlying signed undirected graph} $G$ of $\mathcal{P}$ is the undirected graph whose edges are signed. The edge $\{i,j\}$ is $+$ (respectively, $-$) if and only if $p_{ij}p_{ji}=+$ (respectively, $-$).
	A \textit{path} $P=\{i_1, i_2\} \{i_2, i_3\} \cdots \{i_k, i_{k+1}\}$ of length $l(P) = k$ in $G$ is a sequence of $k$ edges of $G$, where the vertices $i_1, i_2, \dots, i_{k+1}$ are all distinct. For $k \geq 3$, if the vertices $i_1, i_2,..., i_{k}$ are all distinct, then $\{i_1, i_2\} \{i_2, i_3\} \cdots \{i_k, i_{1}\}$ is called a \textit{simple cycle}, denoted by $\mathcal{C} = i_1 i_2 \cdots i_k$, and $l(\mathcal{C)=}k$. Suppose that $\mathcal{C}_i$'s are mutually vertex disjoint simple cycles in $G$, of length $k_i$, for $i=1,2,...,t$, then $\mathcal{C}=\mathcal{C}_1\mathcal{C}_2\cdots \mathcal{C}_t$ is called a \textit{composite cycle} of length $l(\mathcal{C})=\sum_{i=1}^{t}k_i$. A set of nonadjacent edges $\mathcal{M} = \{\{i_1, j_1\}, \{i_2, j_2\}, \dots, \{i_k, j_k\}\}$ is called a \textit{matching} in $G$, of length $l(\mathcal{M})=k$.  
	If $S$ is a set of vertices in $G$, then $G \setminus S$ denotes the subgraph of $G$ obtained by deleting the vertices in $S$ along with their incident edges.
	A combinatorially symmetric sign pattern $\mathcal{P}\in \mathcal{Q}_n$ is called a \textit{tree sign pattern} (respectively, \textit{path sign pattern}) if the underlying signed undirected graph $G$ is a tree (respectively, path). 
	The term path sign pattern is also used to describe a tridiagonal sign pattern.
	
	Let $\mathcal{P} \in \mathcal{Q}_n$ be an irreducible, combinatorially symmetric sign pattern with a $0$-diagonal, and let $G$ denote its underlying signed undirected graph. A path $P$ in $G$ is called a \textit{maximal signed positive path} if it satisfies the following: 
	\begin{itemize}
		\item[i.] the path starts with the first positive edge, 
		or with the first positive edge that follows a negative edge
		\item[ii.] contain successive positive edges and
		\item[iii.] ends at the last positive edge, or when a negative edge occurs. 
	\end{itemize}
	Similarly, a \textit{maximal signed negative path} is defined.
	For example, consider the irreducible, combinatorially symmetric sign pattern $\mathcal{P} = [p_{ij}]$ whose underlying signed undirected graph $G$ is shown in Figure~\ref{nfig11}. 
	\begin{figure}[H] \label{nfig11}
		\tikzset{node distance=1cm}
		\centering
		\begin{tikzpicture}
			\tikzstyle{vertex}=[draw, circle, inner sep=0pt, minimum size=.15cm, fill=black]
			\tikzstyle{edge}=[,thick]
			\node[vertex,label=above:1](v1)at(1,2){};
			\node[vertex,label=right:2](v2)at(2,1){};
			\node[vertex,label=right:3](v3)at(2,-1){};
			\node[vertex,label=below:4](v4)at(1,-2){};
			\node[vertex,label=below:5](v5)at(-1,-2){};
			\node[vertex,label=left:6](v6)at(-2,-1){};
			\node[vertex,label=left:7](v7)at(-2,1){};
			\node[vertex,label=above:8](v8)at(-1,2){};
			\draw[edge](v1)--node[right, yshift=0.2cm, xshift=-0.1cm]{$-$}(v2);
			\draw[edge](v2)--node[right]{$- $}(v3);
			\draw[edge](v3)--node[right, yshift=-0.2cm, xshift=-0.1cm]{$-$}(v4);
			\draw[edge](v4)--node[below]{$+$}(v5);
			\draw[edge](v5)--node[left, yshift=-0.2cm, xshift=0.1cm]{$+$}(v6);
			\draw[edge](v6)--node[left]{$-$}(v7);
			\draw[edge](v7)--node[left, yshift=0.2cm, xshift=0.1cm]{$ +$}(v8);
			\draw[edge](v8)--node[above]{$+$}(v1);
		\end{tikzpicture} \caption{G}\end{figure}
	In this case, the path $\{1,2\}\{2,3\}\{3,4\}$ and $\{6,7\}$ are maximal signed negative path, whereas $\{4,5\}\{5,6\}$ and $\{7,8\}\{8,1\}$ are maximal signed positive path.

	Let $\mathcal{P}$ be an irreducible, combinatorially symmetric sign pattern whose underlying signed undirected graph is $G$ with vertex set $V(G)$. For any two vertices $u, v \in V(G)$, the \textit{distance} $\dist(u,v)$ is the length of the shortest path from $u$ to $v$. If $\mathcal{C} = u_1 u_2 \cdots u_k$ is a cycle in $G$, the distance from a vertex $u \in V(G)$ to $\mathcal{C}$ is given by
	$$\dist(u,\mathcal{C})=\min\{\dist(u,u_i)|~i=1,2,...,k\}.$$

	Suppose $\mathcal{P}=[p_{ij}]\in \mathcal{Q}_n$ is a sign pattern whose underlying signed directed graph $D$ contains a simple $k$-cycle $\gamma$.  Define the $n\times n$ real matrix $B_\gamma(0)=(b_\gamma(0)_{ij})$ by
	\begin{equation} \label{e1}
		b_{\gamma}(0)_{ij}= \begin{cases}
			1 & \text{if}\ p_{ij}=+ ~\text{and}~ \text{is}~ \text{in}~ \gamma \\
			-1 & \text{if}\ p_{ij}=- ~\text{and}~ \text{is}~ \text{in}~ \gamma \\
			0 & \text{elsewhere,}
		\end{cases}
	\end{equation}
	and define the perturbed matrix $B_\gamma(\epsilon)=(b_\gamma(\epsilon)_{ij})$ by
	\begin{equation} \label{e2}
		b_{\gamma}(\epsilon)_{ij}= \begin{cases}
			b_\gamma(0)_{ij} & \text{if}\ p_{ij}~ \text{is}~ \text{in}~ \gamma \\
			\epsilon & \text{if}\ p_{ij}=+ ~\text{and}~ \text{is}~\text{not}~ \text{in}~ \gamma \\
			-\epsilon & \text{if}\ p_{ij}=- ~\text{and}~ \text{is}~ \text{not}~\text{in}~ \gamma \\
			0 & \text{elsewhere,}
		\end{cases}
	\end{equation}
	for some $\epsilon>0$. Since the nonzero eigenvalues of $B_{\gamma}(0)$ are algebraically simple (that is, each has algebraic multiplicity one), 
	thus, for $\epsilon>0$ sufficiently small, the perturbation matrix $B_{\gamma}(\epsilon)$ of $B_{\gamma}(0)$, has $k$ algebraically simple eigenvalues close to the $k$ distinct eigenvalues of $B_{\gamma}(0)$. 
	Consequently, $B_{\gamma}(\epsilon)\in \mathcal{Q}(\mathcal{P})$ has $k$ distinct eigenvalues near the $k$-th complex roots of $1$ or $-1$, depending on the sign of $\gamma$. 

    Let $\Gamma=\gamma_1\gamma_2\cdots \gamma_t$ be a composite cycle in the signed directed graph $D$, where each $\gamma_p$ is a simple cycle of length $l(\gamma_p)$, for $p=1,2,...,t$.  Define the $n\times n$ real matrix $B_\Gamma(0)=(b_\Gamma(0)_{ij})$ by
	\begin{equation} \label{xe3}
		b_{\Gamma}(0)_{ij}= \begin{cases}
			10^p & \text{if}\ p_{ij}=+ ~\text{and is in}~ \gamma_p~\text{for}~p=1,2,...,t \\
			-10^p & \text{if}\ p_{ij}=- ~\text{and is in}~ \gamma_p~\text{for}~p=1,2,...,t\\
			0 & \text{elsewhere.}
		\end{cases}
	\end{equation}
    Then the nonzero eigenvalues of $B_\Gamma(0)$ are the $l(\gamma_p)$-th complex roots of $10^{pl(\gamma_p)}$, $-10^{pl(\gamma_p)}$, depending on the sign of $\gamma_p$, for $p=1,2,...,t$.  Therefore, $B_\Gamma(0)$ has $\sum_{p=1}^{t}l(\gamma_p)$ algebraically simple nonzero eigenvalues. For $\epsilon>0$, define $B_\Gamma(\epsilon)=(b_\Gamma(\epsilon)_{ij})\in \mathcal{Q}(\mathcal{P})$ similarly as in \eqref{e2}, with $\gamma$ replaced by $\Gamma$. Clearly, $B_\Gamma(\epsilon)$ has $\sum_{p=1}^{t}l(\gamma_p)$ algebraically simple nonzero eigenvalues close to the nonzero eigenvalues of $B_\Gamma(0)$ for $\epsilon>0$ sufficiently small. Also, the nonzero real eigenvalues of $B_\Gamma(0)$ of the form $\lambda$, $-\lambda$ cannot merge to form a complex conjugate pair of eigenvalues in $B_\Gamma(\epsilon)$.

Let $\mathcal{P}\in \mathcal{Q}_n$ and let the maximal length of composite cycles in the signed directed graph of $\mathcal{P}$ be $m$. 
For any $B\in \mathcal{Q}(\mathcal{P})$, the characteristic polynomial of $B$, denoted by $\Ch_B(x)$, is of the form
$$
\Ch_B(x) = x^n - E_1(B)x^{n-1} + E_2(B)x^{n-2} - \cdots + (-1)^m E_m(B)x^{n-m},
$$
where $E_k(B)$ is the sum of all cycles (simple or composite) of length $k$ in $B$ properly signed, for all $1 \leq k \leq m$.
Define $V_+(x)$ (respectively, $V_-(x)$) as the number of sign changes in the coefficients of $\Ch_B(x)$ when $x>0$ (respectively, $x<0$).
By Descartes' rule of signs, the number of positive (respectively, negative) real roots of $\Ch_B(x)$ is either exactly equal to $V_+(x)$ 
(respectively, $V_-(x)$) or smaller by an even integer. 
As an example, consider the cubic polynomial
$$
\Ch_B(x) = a_3 x^3 + a_2 x^2 + a_1 x + a_0,
$$
where $B$ is a real matrix and the coefficients $a_i \in \mathbb{R}$, $i=0,1,2,3$, are arbitrary. 
The sign of $\Ch_B(x)$ can be written as
$$
\sgn(\Ch_B(x)) =
\begin{cases}
(\sgn(a_3))(+)\;+\;(\sgn(a_2))(+)\;+\;(\sgn(a_1))(+)\;+\;(\sgn(a_0)), & \text{if~}x>0, \\[1mm]
(\sgn(a_3))(-)\;+\;(\sgn(a_2))(+)\;+\;(\sgn(a_1))(-)\;+\;(\sgn(a_0)), & \text{if~} x<0,
\end{cases}
$$
where $(\sgn(a_i))(\cdot)$ represents the product of $\sgn(a_i)$ and the sign of $x^i$ for $i=3,2,1,0$, 
depending on whether $x>0$ or $x<0$.

In 1994, Eschenbach et al. \cite{1} established several properties of $k$-consistent sign patterns. They also derived a graph theoretic necessary condition for a consistent, irreducible, tridiagonal sign pattern $\mathcal{P}$, based on the number and signs of the edges in the underlying signed undirected graph of $\mathcal{P}$. Furthermore, they proposed that this necessary condition is also sufficient for the consistency of such sign patterns.
In article \cite{2025}, the authors derived several necessary conditions for irreducible, combinatorially symmetric sign patterns to require a unique inertia, based on the number of negative edges and other combinatorial properties of the cycles in their underlying graphs.

In Section~\ref{s2}, we review some relevant results from the literature that will be used in this paper. We then provide a counterexample that shows that the proposition of Eschenbach et al.~\cite{1} does not hold in general.

In Section \ref{s3}, to begin with, we consider irreducible tree sign patterns with a $0$-diagonal. We relate consistent sign patterns in this class with those that require a unique inertia.
 In particular, we prove that for an irreducible, tridiagonal sign pattern $\mathcal{P}$ with a $0$-diagonal of order at most $5$, $\mathcal{P}$ is consistent if and only if it requires a unique inertia. We further derive necessary conditions for consistency of irreducible, tridiagonal sign patterns with a $0$-diagonal and identify a class of such patterns which are not consistent.
In Section~\ref{s4}, we study irreducible, combinatorially symmetric sign patterns whose underlying graphs contain cycles but no loops. We establish several necessary conditions for the consistency of such patterns depending on the sign of the edges and the combinatorial structure of the cycles.

 Eschenbach and Johnson in \cite{1988} proposed the problem of characterizing $k$-consistent sign patterns.
 In \cite{1991}, the authors characterized all $n\times n$ consistent sign patterns $\mathcal{P}$ for which $S_{\mathcal{P}} = (0, n)~\text{or}~ (n, 0)$. Eschenbach \cite{1993a} gave necessary and sufficient conditions for sign patterns of odd order
to have exactly one real eigenvalue.  
In Section \ref{s5}, we examine the class of all sign patterns that require exactly
two real eigenvalues. We denote this class by $\Delta$ and provide useful necessary conditions for sign patterns to
be in $\Delta$. Since a reducible sign pattern is consistent if and only if all its irreducible components are consistent, we consider only irreducible sign patterns in the above study.  In Section \ref{s51}, we also include problems that come as natural extensions of the results obtained in this paper and that pave the way for future research in this topic. 
    
	\section{Preliminaries}\label{s2}

For irreducible, tridiagonal sign patterns with a $0$-diagonal,
Eschenbach et al.~\cite{1} established the following necessary condition for consistency.

\begin{theorem} [Theorem 2.8, \cite{1}] \label{th11}
Let $\mathcal{A}$ be an irreducible, tridiagonal pattern with a $0$-diagonal. 
Then $\mathcal{A}$ is consistent only if the signed undirected graph of $\mathcal{A}$ has, at most, one 
maximal signed path with odd length. 
\end{theorem}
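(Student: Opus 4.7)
The plan is to prove the contrapositive: assuming the signed undirected graph $G$ of $\mathcal{A}$ has at least two maximal signed paths of odd length, I will exhibit two matrices in $\mathcal{Q}(\mathcal{A})$ with different numbers of real eigenvalues. I would first normalize by signature similarity so that every $A=[a_{ij}]\in\mathcal{Q}(\mathcal{A})$ has positive super-diagonal entries and $w_i:=a_{i,i+1}a_{i+1,i}$ carries the sign of the $i$-th edge of $G$. Since $\mathcal{A}$ is tridiagonal with $0$-diagonal, conjugating by $D=\operatorname{diag}(1,-1,1,\dots)$ gives $DAD=-A$, so the spectrum is symmetric about $0$. Using the three-term recurrence $p_k(x)=x\,p_{k-1}(x)-w_{k-1}\,p_{k-2}(x)$, the characteristic polynomial takes the form
\[
\Ch_A(x)=x^{\delta}\sum_{k=0}^{\lfloor n/2\rfloor}(-1)^{k}S_k\,x^{\,n-\delta-2k},\qquad S_k=\sum_{\substack{M\subseteq E(G),\ |M|=k\\ M\ \text{matching}}}\prod_{e\in M}w_e,
\]
with $\delta=n\bmod 2$. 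Setting $y=x^{2}$ yields a polynomial $q(y)$ whose positive real roots account for all nonzero real eigenvalues of $A$, so consistency reduces to the count of non-negative real roots of $q$ being constant over $\mathcal{Q}(\mathcal{A})$.

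The combinatorial heart of the proof is a lemma I would establish first: if $G$ contains two maximal signed runs $R,R'$ of odd length, then there exist matchings $M_1,M_2\subseteq E(G)$ with $|M_1|=|M_2|$ such that the parities of $|M_1\cap E_-(G)|$ and $|M_2\cap E_-(G)|$ differ, where $E_-(G)$ denotes the set of negative edges. The intuition is that matchings supported inside a single odd-length monochromatic run are related only by ``$2$-shifts'' that preserve the parity of negatives chosen from the run; but a second odd-length run supplies the extra flexibility to swap a chosen edge in $R$ for a shifted edge in $R'$, flipping the parity while preserving the matching size. As a consequence, $S_k$ is a sum with summands of opposite sign, so tuning the magnitudes $|w_e|$ can flip $\sgn(S_k)$ without changing the sign pattern.

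To convert sign-indeterminacy of a coefficient into a difference in $i_r$, I would apply the perturbation construction \eqref{xe3}--\eqref{e2} with $\Gamma_i$ equal to the composite cycle made of the $2$-cycles on the edges of $M_i$. For sufficiently small $\epsilon>0$, the matrix $B_{\Gamma_i}(\epsilon)\in\mathcal{Q}(\mathcal{A})$ has $2|M_i|$ nonzero eigenvalues close to $\pm\sqrt{w_e}$ for $e\in M_i$, of which exactly $2\,|M_i\cap E_+(G)|$ are real. Since $M_1$ and $M_2$ share the same size but contain different numbers of positive edges (by the parity lemma), the large real-eigenvalue counts of $B_{\Gamma_1}(\epsilon)$ and $B_{\Gamma_2}(\epsilon)$ already disagree by at least $2$, and the spectral symmetry forces the remaining small eigenvalues to appear in $\pm$ pairs so their contribution to $i_r$ is even.

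The main obstacle is precisely this last step. The $n-2|M_i|$ eigenvalues of $B_{\Gamma_i}(\epsilon)$ near $0$ behave like those of a small perturbation of the zero tridiagonal sub-matrix on the vertices unsaturated by $M_i$, and their real-versus-nonreal split is not determined by $M_i$ alone. I plan to control this by refining $M_1,M_2$ so that the sign sub-patterns on their complementary vertex sets are equivalent under permutation and signature similarity, making the near-zero contributions to $i_r$ identical for the two matrices. If such a refinement is not uniformly available, the fallback is to abandon the matching construction and argue directly on $q(y)$: for two explicit magnitude assignments consistent with the sign pattern, use Descartes' rule of signs together with reality checks (made possible because dominant-edge choices force a block structure whose sub-spectra are known) to show that $q(y)$ has different numbers of positive real roots, and hence $\mathcal{A}$ has different values of $i_r$ on the two corresponding matrices.
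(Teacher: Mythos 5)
Your toolkit---realizing matchings as composite $2$-cycles and perturbing via \eqref{xe3}--\eqref{e2}---is exactly the machinery this paper builds for such arguments (Lemma~\ref{xnl3}), but the step you yourself flag as the ``main obstacle'' is a fatal gap, not a technicality, and neither of your repairs closes it. The inference ``$|M_1|=|M_2|$, different numbers of negative edges, small eigenvalues contribute evenly to $i_r$, hence $i_r\bigl(B_{\Gamma_1}(\epsilon)\bigr)\neq i_r\bigl(B_{\Gamma_2}(\epsilon)\bigr)$'' is invalid: an even contribution need not be an equal contribution, so a gap of $2$ in the large-eigenvalue counts can be exactly cancelled by the near-zero eigenvalues. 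This cancellation really occurs. Take the pattern $\mathcal{P}_3$ of Theorem~\ref{th3.1}, Case~3 (edge signs $+,+,-$, $n=4$), which the paper proves is consistent with $S_B=(2,2)$ for every $B$. The matchings $M_1=\{e_1\}$ and $M_2=\{e_3\}$ have equal size and different numbers of negative edges---precisely the input your argument needs. With $w_1=1,\,w_2=\epsilon^2,\,w_3=-\epsilon^2$ the characteristic polynomial is $x^4-x^2-\epsilon^2$, so the large pair $\approx\pm1$ is real and the small pair is imaginary ($i_r=2$); with $w_1=w_2=\epsilon^2,\,w_3=-1$ it is $x^4+(1-2\epsilon^2)x^2-\epsilon^2$, so the large pair is imaginary but the small pair $\approx\pm\epsilon$ is now real ($i_r=2$ again). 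The near-zero eigenvalues flip to compensate, so your conclusion does not follow; note also that your first repair is unavailable here (the complementary sub-patterns are a single $-$ edge versus a single $+$ edge, which are not equivalent), and this example shows the reality of the small eigenvalues is governed by sign data outside the matching, which your parity lemma does not control.

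What is missing is maximality, plus an even/odd dichotomy. Since the underlying graph is a tree, the composite cycles of $D$ are exactly the matchings of $G$, so if $m$ is the maximum composite-cycle length then $x^{n-m}$ divides $\Ch_B(x)$ for \emph{every} $B\in\mathcal{Q}(\mathcal{A})$; hence if one takes $|M_1|=|M_2|=m/2$, the residual $n-m$ eigenvalues are pinned at $0$ across the whole qualitative class and $i_r\bigl(B_{\Gamma_i}(\epsilon)\bigr)=(n-m)+2|M_i\cap E_+(G)|$ exactly---this is Lemma~\ref{xnl3}. For $n$ odd this finishes the proof: a path on an odd number of vertices has one maximum matching for each omitted odd-indexed vertex, and the presence of an odd-length run forces two of these to have different numbers of negative edges. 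But for $n$ even the maximum matching $\{e_1,e_3,\dots,e_{n-1}\}$ is \emph{unique}, so both Lemma~\ref{xnl3} and your parity lemma (restricted to maximum matchings) are vacuous there; a complete proof must treat this case separately, e.g.\ by comparing $M^*=\{e_1,e_3,\dots,e_{n-1}\}$ with a matching $M'$ of size $n/2-1$ built around the second odd run (which necessarily starts at an even edge index and is interior), using that $\det B=\pm w_1w_3\cdots w_{n-1}\neq0$ on all of $\mathcal{Q}(\mathcal{A})$ together with the $\lambda\mapsto-\lambda$ symmetry to force the two leftover eigenvalues to contribute exactly $0$ or $2$ to $i_r$, while $|M'\cap E_+(G)|$ differs from $|M^*\cap E_+(G)|$ by $+1$ or $-2$. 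Your proposal contains neither the maximality mechanism nor this case split, so as written it does not prove the theorem.
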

They conjectured that the above necessary condition is also sufficient for an irreducible, tridiagonal sign pattern with a $0$-diagonal to be consistent.
The following example shows that the proposed conjecture is not true. 

\begin{eg} \label{eg2.2}\rm
Let 	$$\mathcal{P}=\begin{bmatrix}
	0&+&0&0&0&0\\ +&0&+&0&0&0\\ 0&+&0&-&0&0\\ 0&0&+&0&+&0\\0&0&0&+&0&+\\0&0&0&0&+&0
\end{bmatrix},$$ be an irreducible, tridiagonal sign pattern with a $0$-diagonal. Therefore, the underlying signed undirected graph of $\mathcal{P}$ has exactly one maximal signed path with odd length. However, any real matrix $B\in \mathcal{Q}(\mathcal{P}),$ is similar to
$$\begin{bmatrix}
	0&1&0&0&0&0\\ a&0&1&0&0&0\\ 0&b&0&-1&0&0\\ 0&0&c&0&1&0\\0&0&0&d&0&1\\0&0&0&0&e&0
\end{bmatrix}, ~\text{where}~ a,b,c,d,e>0.$$ \begin{itemize}
	\item[i.] If $B_1\in  \mathcal{Q}(\mathcal{P})$ is such that $a=b=c=d=e=1$, then $\sigma(B_1)=\{-1.3071 + 0.2151i, -1.3071 - 0.2151i, 1.3071 + 0.2151i, 1.3071 - 0.2151i, 0.5698i, - 0.5698i\}$, so $S_{B_1}=(0,6)$.
	\item[ii.] If $B_2\in  \mathcal{Q}(\mathcal{P})$ is such that $a=b=c=d=1$ and $e=2$, then $\sigma(B_2)=\{-1.5538, 1.5538, -1.4142,\\ 1.4142, -0.6436i, 0.6436i\}$, so $S_{B_2}=(4,2)$.
\end{itemize}  Since $S_{B_1}\neq S_{B_2}$, $\mathcal{P}$ is not consistent. 
\end{eg}

In \cite{2025}, the authors, however, obtained a similar necessary condition for irreducible, tridiagonal sign patterns with a $0$-diagonal to require a unique inertia.
\begin{theorem}[Theorem~2.8, \cite{2025}]\label{th2.51}
Let $\mathcal{P}\in \mathcal{Q}_n$ be an irreducible, tridiagonal sign pattern with a $0$-diagonal. Then $\mathcal{P}$ requires a unique inertia only if the signed undirected graph of $\mathcal{P}$ has, at most, one maximal signed path with odd length.
\end{theorem}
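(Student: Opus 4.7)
The plan is to establish the contrapositive: if the underlying signed undirected graph $G$ of $\mathcal{P}$ contains two or more maximal signed paths of odd length, produce $B_1,B_2\in\mathcal{Q}(\mathcal{P})$ with distinct inertias. The starting observation is structural: because $\mathcal{P}$ is tridiagonal with a zero diagonal, every simple cycle in its signed digraph is a $2$-cycle supported on an edge, so $E_k(B)=0$ for every odd $k$ and every $B\in\mathcal{Q}(\mathcal{P})$. Consequently $\Ch_B(x)=x^{\,n\bmod 2}q_B(x^2)$, and cycle enumeration gives
\[
q_B(y)=\sum_{k=0}^{m}(-1)^k S_k(B)\,y^{m-k},\qquad S_k(B)=\sum_{|\mathcal{M}|=k}\ \prod_{\{i,i+1\}\in\mathcal{M}} c_i(B),
\]
with $m=\lfloor n/2\rfloor$ and $c_i(B)=b_{i,i+1}b_{i+1,i}$. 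The spectrum of $B$ is therefore symmetric about $0$ (with an extra zero when $n$ is odd), which forces $i_+(B)=i_-(B)$ and $i_0(B)=2N_-(q_B)+(n\bmod 2)$, where $N_-(q_B)$ is the number of negative real roots of $q_B$. Hence $\mathcal{P}$ requires a unique inertia if and only if $N_-(q_B)$ is constant as the magnitudes $|c_i(B)|>0$ vary, their signs being pinned by the edges of $G$.

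For the construction I would let $P_a,P_b$ be two maximal signed paths of odd length and choose an interior vertex $v$ strictly separating them, so that deleting $v$ partitions the path into subpaths $G_L\supseteq P_a$ and $G_R\supseteq P_b$. Take $B_1$ with $|c_i|=1$ for every edge, and $B_2(\epsilon)$ agreeing with $B_1$ except that the two edges incident to $v$ have magnitude $\epsilon>0$. As $\epsilon\to 0^+$, $B_2(\epsilon)$ converges to a block-diagonal matrix whose blocks are tridiagonal $0$-diagonal matrices on $G_L$, $\{v\}$, and $G_R$; by continuity of roots,
\[
\lim_{\epsilon\to 0^+}N_-\bigl(q_{B_2(\epsilon)}\bigr)=N_-(q_{B_L})+N_-(q_{B_R})+\delta,
\]
where $\delta\in\{0,1\}$ accounts for the fate of the extra zero eigenvalue introduced by isolating $v$. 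Using that a tridiagonal $0$-diagonal block with all positive edge-products $c_i$ is symmetrizable (giving $N_-=0$) while one with all negative $c_i$ has purely imaginary nonzero spectrum (giving the maximal $N_-$), a direct case analysis on the signs of the runs containing $P_a$ inside $G_L$ and $P_b$ inside $G_R$ shows that the odd parity of these runs forces the limiting total to differ from $N_-(q_{B_1})$. Continuity then gives $N_-(q_{B_2(\epsilon)})\ne N_-(q_{B_1})$ for all sufficiently small $\epsilon>0$, so $B_1$ and $B_2(\epsilon)$ have distinct inertias.

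The main obstacle is making this final parity comparison uniform across all configurations: one must track the signs of the monochromatic runs containing $P_a,P_b$ (same versus opposite), the parities of $|V(G_L)|$ and $|V(G_R)|$, and the factors contributed by any additional even- or odd-length runs inside $G_L$ and $G_R$. Boundary cases---when $P_a$ or $P_b$ abuts an endpoint of $G$, when $G_L$ or $G_R$ itself fails to require unique inertia, or when the perturbation direction $\delta$ cannot be read off from a first-order expansion---are handled either by induction on $n$ via the Chebyshev-like recurrence $p_n(x)=x\,p_{n-1}(x)-c_{n-1}\,p_{n-2}(x)$ satisfied by the leading principal characteristic polynomials, or by simultaneously perturbing more than one magnitude so as to separate the eigenvalue contributions coming from different blocks.
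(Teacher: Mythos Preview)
This statement is quoted from~\cite{2025} and is not proved in the present paper, so there is no in-paper argument to compare against directly. That said, the paper does contain a one-line reduction you have overlooked: by Remark~\ref{th2.3}, $\mathcal{P}$ requires a unique inertia iff $\mathcal{P}_-$ is consistent, and passing from $\mathcal{P}$ to $\mathcal{P}_-$ flips every edge sign and hence preserves the lengths of all maximal signed paths. Thus Theorem~\ref{th2.51} is immediately equivalent to Theorem~\ref{th11}. The method actually used in the companion results (see Lemma~\ref{xnl3} and the proof of Theorem~\ref{x2}(ii)) is not a decoupling limit but a matching argument: two odd maximal signed runs allow one to build two maximum-size matchings $\mathcal{M}_1,\mathcal{M}_2$ of the path with different numbers of positive edges, and the associated perturbed matrices $B_{\Gamma_1}(\epsilon),B_{\Gamma_2}(\epsilon)$ from~\eqref{xe3}--\eqref{e2} then have different $i_0$ directly.

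Your reduction to $N_-(q_B)$ is correct, but the construction has a genuine gap precisely at the step you flag as the ``main obstacle'': you never compute $N_-(q_{B_1})$, and the asserted inequality with the block limit is not established. In fact your specific baseline can fail. For $n=4$ with edge signs $+,-,+$ (two odd maximal signed paths $P_a=\{1,2\}$, $P_b=\{3,4\}$), taking $|c_i|=1$ gives $q_{B_1}(y)=y^2-y+1$ with no real roots, so $N_-(q_{B_1})=0$; shrinking the weights at any interior vertex still yields $q(y)=y^2-y+\epsilon$ with two \emph{positive} roots, so $N_-(q_{B_2(\epsilon)})=0$ as well, and your two matrices have the same inertia. (The pattern does fail to require unique inertia, but one must make $|c_2|$ large, not small, to see $N_-=2$.) Moreover, in this same example no vertex $v$ is disjoint from both $P_a$ and $P_b$, so your separating vertex need not exist. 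The fix is to abandon the all-ones baseline and instead compare two maximum matchings with different counts of positive edges, as in the paper's perturbation framework; the existence of two odd-length maximal signed runs is exactly the combinatorial input that produces such a pair.
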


The following necessary and sufficient condition for the consistency of a sign pattern is from \cite{1}.

\begin{remark}\label{xx5}\rm
Let $\mathcal{P}\in \mathcal{Q}_n$. If $n_r(\mathcal{P})$ (respectively, $n_c(\mathcal{P})$) denotes the maximum number of real (respectively, nonreal) eigenvalues allowed by $\mathcal{P}$, then $\mathcal{P}$ is consistent if and only if 
$
n_r(\mathcal{P}) + n_c(\mathcal{P}) = n.
$
\end{remark}

		Let $\mathcal{P}\in \mathcal{Q}_n$ be a tree sign pattern with a $0$-diagonal, with the underlying signed undirected graph $G$. Then $\mathcal{P}_-$ is defined to be the sign pattern whose underlying signed undirected graph $G_-$ is obtained from $G$ by taking the opposite sign of each signed edge of $G$.

	\begin{eg}\rm
    Let $\mathcal{P}\in \mathcal{Q}_n$ be a tree sign pattern with a $0$-diagonal, whose underlying signed undirected graph is $G$  given in Fig.\ref{figx1}.
		\begin{figure}[H]
\centering
\begin{minipage}{.45\textwidth}
\vspace{-1cm}
$$
\mathcal{P}=\begin{bmatrix}
0 & + & 0& 0 & 0\\
- & 0 & + & 0 & 0 \\
0 & + & 0 & + & + \\
0 & 0 & - & 0 & 0\\
0 & 0 & - & 0 & 0
\end{bmatrix}
$$
\end{minipage}
\hspace{0.05\textwidth}
\begin{minipage}{.45\textwidth}
\centering
\tikzset{node distance=1cm}
\begin{tikzpicture}[
  vertex/.style={draw, circle, inner sep=0pt, minimum size=.15cm, fill=black},
  edge/.style={thick}
  ]
  \node[vertex,label=above:1] (v0) at (-3,0) {};
  \node[vertex,label=above:2] (v1) at (-1.5,0) {};
  \node[vertex,label=above:3] (v2) at (0,0) {};
  \node[vertex,label=above:4] (v3) at (1.5,0) {};
  \node[vertex,label=below:5] (v4) at (0,-1) {};
  
  \draw[edge] (v0) -- node[above] {$-$} (v1);
  \draw[edge] (v1) -- node[above] {$+$} (v2);
  \draw[edge] (v2) -- node[above] {$-$} (v3);
  \draw[edge] (v4) -- node[right] {$-$} (v2);
\end{tikzpicture}
\caption{The signed undirected graph $ G$ of $\mathcal{P}$.}
\label{figx1}
\end{minipage}
\end{figure}

	Then the underlying graph corresponding to $\mathcal{P}_-$ is $G_-$ given in Fig.\ref{fign2}.
	\begin{figure}[H]
    \centering
    \begin{minipage}{0.45\textwidth}
    \vspace{-1cm}
        $$
        \mathcal{P}_-=
        \begin{bmatrix}
        0&+&0&0&0\\
           +& 0 & + & 0 & 0 \\
           0& - & 0 & + & + \\
           0& 0 & + & 0 & 0 \\
           0& 0 & + & 0 & 0
        \end{bmatrix}
        $$
    \end{minipage}\hfill
    \begin{minipage}{0.45\textwidth}
        \centering
        \tikzset{
            vertex/.style={draw, circle, inner sep=0pt, minimum size=.15cm, fill=black},
            edge/.style={thick}
        }
        \begin{tikzpicture}
        \node[vertex,label=above:1](v0)at(-3,0){};
            \node[vertex,label=above:2](v1)at(-1.5,0){};
            \node[vertex,label=above:3](v2)at(0,0){};
            \node[vertex,label=above:4](v3)at(1.5,0){};
            \node[vertex,label=below:5](v4)at(0,-1){};
            \draw[edge](v0)--node[above]{$+$}(v1);
            \draw[edge](v1)--node[above]{$-$}(v2);
            \draw[edge](v2)--node[above]{$+$}(v3);
            \draw[edge](v4)--node[right]{$+$}(v2);
        \end{tikzpicture}
        \caption{The signed undirected graph $G_-$ of $\mathcal{P}_-$.}
        \label{fign2}
    \end{minipage}
\end{figure}

\end{eg}

In \cite[Lemma 2.4]{2025}, the authors showed that if $\lambda$ is an eigenvalue of $B=[b_{ij}]\in \mathcal{Q}(\mathcal{P})$ with algebraic multiplicity $m$, where $\mathcal{P}\in \mathcal{Q}_n$ is a tree sign pattern with a $0$-diagonal, then $i\lambda$ is an eigenvalue of $B_-=[b^-_{ij}] \in \mathcal{Q}(\mathcal{P}_-)$ with algebraic multiplicity $m$, where $|b_{ij}|=|b^-_{ij}|$ for all $i,j$.

\begin{remark} \rm \label{th2.3} \cite{2025}
   If $\mathcal{P}\in \mathcal{Q}_n$ is a tree sign pattern with a $0$-diagonal, then $\mathcal{P}$ requires a unique inertia (respectively, consistent) if and only if $\mathcal{P}_-$ is consistent (respectively, requires a unique inertia).
\end{remark}



\section{Consistency of tree sign patterns with a $0$-diagonal and their relation to sign patterns requiring a unique inertia} \label{s3}

In this section, we begin with irreducible path sign patterns with a $0$-diagonal and derive necessary conditions for such patterns to be consistent. We also associate consistent tridiagonal sign patterns with a $0$-diagonal with patterns that require a unique inertia.


\begin{lemma} [Lemma 2.1, \cite{2025}]\label{lem1}
		Suppose that $f(x)$ is a real polynomial of even degree $n$, consisting only of even powers of $x$. If $\lambda$ is a root of $f(x)$ with multiplicity $m$, then $-\lambda$ is also a root of $f(x)$ with multiplicity $m$.
	\end{lemma}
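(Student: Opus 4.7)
The plan is to exploit the fact that a real polynomial containing only even powers of $x$ is an \emph{even function}. Indeed, every term of $f$ is of the form $a_{2k}x^{2k}$, and $(-x)^{2k}=x^{2k}$, so the identity
$$
f(-x) = f(x) \qquad \text{for all } x\in\reals
$$
holds as an identity of polynomials.

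Next I would transfer this symmetry to every derivative of $f$. Differentiating the identity $f(-x)=f(x)$ with respect to $x$ and applying the chain rule $k$ times gives
$$
(-1)^{k} f^{(k)}(-x) \;=\; f^{(k)}(x),
$$
so $f^{(k)}(-x) = (-1)^{k} f^{(k)}(x)$ for every $k\ge 0$. This is the only computation in the proof.

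From here the conclusion is immediate via the standard characterization of multiplicity: $\lambda$ is a root of $f$ of multiplicity exactly $m$ iff $f^{(k)}(\lambda)=0$ for $0\le k\le m-1$ and $f^{(m)}(\lambda)\ne 0$. Plugging $x=\lambda$ into the derivative identity, these conditions transfer verbatim to $-\lambda$ (the factor $(-1)^{k}$ cannot turn a zero into a nonzero quantity, nor vice versa). Hence $-\lambda$ is also a root of $f$ of multiplicity exactly $m$.

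There is essentially no obstacle here; the only thing to check is that the argument is also valid when $\lambda=0$, but in that case $\lambda=-\lambda$ and the claim is trivial. An alternative approach, writing $f(x)=g(x^{2})$ with $\deg g = n/2$ and tracking multiplicities of $\lambda^{2}$ as a root of $g$, also works but requires separate handling of $\lambda=0$ because of the factor of $x$ appearing in $f'(x)=2xg'(x^{2})$; the derivative-based argument above avoids this nuisance.
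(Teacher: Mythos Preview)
Your proof is correct. The paper itself does not supply a proof of this lemma; it merely cites it as Lemma~2.1 of~\cite{2025}, so there is nothing to compare against here. Your derivative-based argument via the evenness identity $f^{(k)}(-x)=(-1)^k f^{(k)}(x)$ and the standard characterization of multiplicity is clean and complete, and your remark that it works uniformly for $\lambda=0$ (whereas the substitution $f(x)=g(x^2)$ requires separate bookkeeping there) is apt.
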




\begin{theorem} \label{th3.1}
If $\mathcal{P}\in \mathcal{Q}_n$ is an irreducible, tridiagonal sign pattern with a $0$-diagonal where $n\leq 5$, then $\mathcal{P}$ is consistent if and only if $\mathcal{P}$  requires a unique inertia.
\end{theorem}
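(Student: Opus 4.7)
The plan is to establish that, for $n \le 5$, both properties ``$\mathcal{P}$ consistent'' and ``$\mathcal{P}$ requires a unique inertia'' are equivalent to the graph-theoretic condition of Theorem~\ref{th11} and Theorem~\ref{th2.51}, namely that the underlying signed undirected graph of $\mathcal{P}$ has at most one maximal signed path of odd length. By those two theorems, each of ``consistent'' and ``requires a unique inertia'' already implies this condition; so the nontrivial direction is: if the condition holds and $n \le 5$, then $\mathcal{P}$ is both consistent and requires a unique inertia. Once this is proved, the biconditional of the theorem follows immediately.

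For the converse direction, I would combine two structural facts with a finite case analysis. First, since $\mathcal{P}$ is tridiagonal with a $0$-diagonal, signature similarity by $S=\operatorname{diag}(1,-1,1,-1,\ldots)$ sends any $B\in\mathcal{Q}(\mathcal{P})$ to $-B$, so the spectrum of every such $B$ is symmetric about $0$ and in particular $i_+(B)=i_-(B)$. Second, under permutation similarity, signature similarity, negation, and transposition, $\mathcal{P}$ is determined by the edge-sign sequence $(e_1,\ldots,e_{n-1})$ of the underlying path up to reversal, and after a positive diagonal scaling the characteristic polynomial $\chi_B$ depends only on the products $u_i:=B_{i,i+1}B_{i+1,i}$, whose signs are the $e_i$'s. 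The tridiagonal determinantal recurrence then yields a closed form; for example, for $n=5$,
$$\chi_B(x)=x^5-(u_1+u_2+u_3+u_4)\,x^3+(u_1 u_3+u_1 u_4+u_2 u_4)\,x.$$
The sign sequences satisfying the necessary condition are few: $\{++\},\{--\}$ for $n=3$; $\{+++\},\{++-\},\{+--\},\{---\}$ for $n=4$; and $\{++++\},\{++--\},\{----\}$ for $n=5$. In each class, substituting $y=x^2$ reduces $\chi_B$ to $q(y)$ or $x\cdot q(y)$ according to the parity of $n$, where $q$ is at most quadratic with coefficient signs determined by the class; one then reads off that the signs of the $y$-roots---and hence the numbers of real, purely imaginary, and zero eigenvalues of $B$, together with the inertia $(i_+(B),i_-(B),i_0(B))$---are constant on $\mathcal{Q}(\mathcal{P})$.

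The main technical point is in the uniform-sign classes, for instance $\{++++\}$ and $\{----\}$ at $n=5$, where $q$ must have two real $y$-roots of the same sign; this requires a strictly positive discriminant. For $n=5$ it follows from the sum-of-squares identity
$$\operatorname{disc}(q)=(u_1+u_2-u_3-u_4)^2+4u_2 u_3,$$
which is strictly positive whenever $u_2$ and $u_3$ share a sign. For the mixed class $\{++--\}$ at $n=5$, the constant term $u_1 u_3+u_1 u_4+u_2 u_4$ is strictly negative, so $q$ has one positive and one negative $y$-root irrespective of the discriminant, giving a fixed eigenvalue count. The $n\le 4$ classes are handled by entirely parallel and simpler arguments. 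Carrying out these finite checks completes the converse direction and hence proves the theorem.
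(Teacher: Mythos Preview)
Your proposal is correct and follows the same overall strategy as the paper: use Theorems~\ref{th11} and~\ref{th2.51} to dispose of patterns with more than one odd maximal signed path, then verify case by case that every remaining equivalence class for $n\le 5$ is both consistent and requires a unique inertia. The difference lies only in how the finitely many cases are dispatched. The paper handles the uniform-sign classes by noting similarity to a real symmetric or skew-symmetric matrix, treats the mixed classes with Descartes' rule of signs applied to $\mathrm{Ch}_B$, and reduces one $n=4$ case to another via the $\mathcal{P}_-$ duality of Remark~\ref{th2.3}. You instead uniformly substitute $y=x^2$, reduce to an at-most-quadratic $q(y)$, and read off the sign of each $y$-root from the coefficients; your discriminant identity $(u_1+u_2-u_3-u_4)^2+4u_2u_3$ (and its $n=4$ analogue $(u_1+u_2-u_3)^2+4u_2u_3$) is a clean way to force two real same-sign $y$-roots in the uniform classes without invoking symmetry. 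Both routes are short; yours is slightly more self-contained (no Descartes, no $\mathcal{P}_-$), while the paper's symmetric/skew-symmetric observation makes those particular cases immediate.
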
	
\begin{proof}
If the number of maximal signed paths of odd length in $\mathcal{P}$ is more than one, then by Theorem \ref{th11} and Theorem \ref{th2.51}, $\mathcal{P}$ is neither consistent nor requires a unique inertia. So, to complete the proof, it is enough to consider $\mathcal{P}$ with the number of maximal signed paths of odd length at most one.

\textbf{Case~1:} $n=2$.\\ The only possible sign patterns up to equivalence are $$\mathcal{P}_1=\begin{bmatrix}
	0&+\\+&0
\end{bmatrix},~\mathcal{P}_2=\begin{bmatrix}
	0&-\\+&0
\end{bmatrix}.$$ Every $B\in \mathcal{Q}(\mathcal{P}_1)$ (respectively, $B\in \mathcal{Q}(\mathcal{P}_2)$) is similar to a real symmetric (respectively, skew-symmetric) matrix. So,  $S_B = (2,0)$ and $\In(B) = (1,1,0)$ for $B \in \mathcal{Q}(\mathcal{P}_1)$, respectively, $S_B = (0,2)$ and $\In(B) = (0,0,2)$ for $B \in \mathcal{Q}(\mathcal{P}_2)$. Therefore, $\mathcal{P}_1, \mathcal{P}_2$ are consistent sign patterns and also require a unique inertia.  


\textbf{Case~2:} $n=3$.\\
Since the number of maximal signed paths of odd length in $\mathcal{P}$ is at most one, the only possible sign patterns up to equivalence are $$\mathcal{P}_1=\begin{bmatrix}
	0&+&0\\+&0&+\\0&+&0
\end{bmatrix}, ~\mathcal{P}_2= \begin{bmatrix}
	0&-&0\\+&0&-\\0&+&0
\end{bmatrix}.$$ Similarly as in case~1, we have $S_B = (3,0)$ and $\In(B) = (1,1,1)$ for $B \in \mathcal{Q}(\mathcal{P}_1)$, respectively, $S_B = (1,2)$ and $\In(B) = (0,0,3)$ for $B \in \mathcal{Q}(\mathcal{P}_2)$. Therefore, $\mathcal{P}_1, \mathcal{P}_2$ are consistent sign patterns and also require a unique inertia.



\textbf{Case~3:} $n=4$.\\
Since the number of maximal signed paths of odd length in $\mathcal{P}$ is at most one, the only possible sign patterns up to equivalence are
$$\mathcal{P}_1=\begin{bmatrix}
	0&+&0&0\\+&0&+&0\\0&+&0&+\\0&0&+&0
\end{bmatrix}, ~\mathcal{P}_2=\begin{bmatrix}
	0&-&0&0\\+&0&-&0\\0&+&0&-\\0&0&+&0
\end{bmatrix}, ~\mathcal{P}_3=\begin{bmatrix}
	0&+&0&0\\+&0&+&0\\0&+&0&-\\0&0&+&0
\end{bmatrix},~\mathcal{P}_4=\begin{bmatrix}
	0&-&0&0\\+&0&-&0\\0&+&0&+\\0&0&+&0
\end{bmatrix}.$$

\begin{itemize}
	\item  Similarly as in case~1, we have $S_B = (4,0)$ and $\In(B) = (2,2,0)$ for $B \in \mathcal{Q}(\mathcal{P}_1)$, respectively, $S_B = (0,4)$ and $\In(B) = (0,0,4)$ for $B \in \mathcal{Q}(\mathcal{P}_2)$. Therefore, $\mathcal{P}_1, \mathcal{P}_2$ are consistent sign patterns and require a unique inertia.
	

	\item 
	Every $B\in \mathcal{Q}(\mathcal{P}_3)$ is similar to
	
	$$B=\begin{bmatrix}
		0&1&0&0\\ a&0&1&0\\ 0&b&0&-1\\ 0&0&c&0
	\end{bmatrix},~\text{where} ~a,b,c>0.$$
	
	Therefore, the characteristic polynomial of $B$ is $\Ch_B(x)=x^4-(a+b-c)x^2-ac$ and
     $$\sgn(\Ch_B(x))=(+)-(*)(+)+(-),~\text{for}~x\in \mathbb{R}\setminus \{0\},$$
 where $*$ denotes an arbitrary sign.
     Since $V_+(x)=1$ and $V_-(x)=1$, by Descartes' rule of signs, the number of positive, negative real roots of $\Ch_B(x)$ is exactly one. So, $S_B=(2,2)$ and by Lemma~\ref{lem1}, $\In(B)=(1,1,2)$ for all $B\in \mathcal{Q}(\mathcal{P}_3)$. Therefore, $\mathcal{P}_3$ is consistent and requires a unique inertia.

	
	\item
	Since ${\mathcal{P}_{4}}_-=\mathcal{P}_3$, ${\mathcal{P}_{4}}_-$ is consistent and requires a unique inertia.
	Therefore, by Remark \ref{th2.3}, $\mathcal{P}_4$ is consistent and also requires a unique inertia.
    
\end{itemize}

\textbf{Case~4:} $n=5$.\\
Since the number of maximal signed paths of odd length in $\mathcal{P}$ is at most one, the only possible  sign patterns up to equivalence are

$$\mathcal{P}_1=\begin{bmatrix}
	0&+&0&0&0\\+&0&+&0&0\\0&+&0&+&0\\0&0&+&0&+\\0&0&0&+&0
\end{bmatrix},~ \mathcal{P}_2=\begin{bmatrix}
	0&-&0&0&0\\+&0&-&0&0\\0&+&0&-&0\\0&0&+&0&-\\0&0&0&+&0
\end{bmatrix},~ \mathcal{P}_3=\begin{bmatrix}
	0&+&0&0&0\\+&0&+&0&0\\0&+&0&-&0\\0&0&+&0&-\\0&0&0&+&0
\end{bmatrix}.$$

\begin{itemize}
	\item Similarly as in case~1, we have $S_B = (5,0)$ and $\In(B) = (2,2,1)$ for $B \in \mathcal{Q}(\mathcal{P}_1)$; respectively, $S_B = (0,5)$ and $\In(B) = (0,0,5)$ for $B \in \mathcal{Q}(\mathcal{P}_2)$. Therefore, $\mathcal{P}_1, \mathcal{P}_2$ are consistent sign patterns and require a unique inertia.


	\item Every $B\in \mathcal{Q}(\mathcal{P}_3)$ is similar to
	
	$$B=\begin{bmatrix}
		0&1&0&0&0\\ a&0&1&0&0\\ 0&b&0&-1&0\\ 0&0&c&0&-1\\ 0&0&0&d&0
	\end{bmatrix},~ \text{where}~a,b,c,d>0.$$
Therefore, the characteristic polynomial of $B$ is $\Ch_B(x)=x(x^4-(a+b-c-d)x^2-(ac+ad+bd))$. Take, $g(x)=x^4-(a+b-c-d)x^2-(ac+ad+bd)$, then
     $$\sgn(g(x))=(+)-(*)(+)+(-),~\text{for}~x\in \mathbb{R}\setminus \{0\},$$
 where $*$ denotes an arbitrary sign.
     Since $V_+(x)=1$ and $V_-(x)=1$, by Descartes' rule of signs, the number of positive, negative real roots of $g(x)$ is exactly one. So,  $S_B=(3,2)$ and by Lemma~\ref{lem1}, $\In(B)=(1,1,3)$ for all $B\in \mathcal{Q}(\mathcal{P}_3)$. Therefore, $\mathcal{P}_3$ is consistent and also requires a unique inertia.
    
\end{itemize}
\end{proof}




The following result is from Eschenbach et al. \cite{1}.
\begin{lemma}[Lemma 1.5, \cite{1}]\label{l1n}
If an $n\times n$ sign pattern matrix $\mathcal{A}$ does not allow repeated real eigenvalues, then $\mathcal{A}$ is consistent.
\end{lemma}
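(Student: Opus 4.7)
The plan is to exploit the convexity of $\mathcal{Q}(\mathcal{A})$ together with continuity of eigenvalues, and show that any change in the real--nonreal split forces a repeated real eigenvalue somewhere along the way.

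First I would observe that $\mathcal{Q}(\mathcal{A})$ is convex: if $A,B\in\mathcal{Q}(\mathcal{A})$ then for any $t\in[0,1]$ the entry $(tA+(1-t)B)_{ij}=ta_{ij}+(1-t)b_{ij}$ has the same sign as $a_{ij}$ (and $b_{ij}$), since $a_{ij}$ and $b_{ij}$ share a sign, or are both zero. Thus $tA+(1-t)B\in\mathcal{Q}(\mathcal{A})$, and in particular $\mathcal{Q}(\mathcal{A})$ is path-connected.

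Next, given two matrices $A,B\in\mathcal{Q}(\mathcal{A})$, I would define the continuous path $M(t)=(1-t)A+tB$ for $t\in[0,1]$ lying entirely in $\mathcal{Q}(\mathcal{A})$. The roots of $\Ch_{M(t)}(x)$ depend continuously on $t$ (as unordered multisets in $\mathbb{C}$), and because $M(t)$ is real, its nonreal eigenvalues occur in complex conjugate pairs. The quantity $i_r(M(t))$ is integer-valued, so to conclude $i_r(A)=i_r(B)$ it suffices to prove that $i_r(M(t))$ is locally constant in $t$.

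The key step is the following: if $i_r$ changes at some $t_0\in[0,1]$, then a pair of complex conjugate eigenvalues $\lambda(t),\overline{\lambda(t)}$ of $M(t)$ must collide on the real axis as $t\to t_0$ (or a pair of real eigenvalues must coalesce before leaving into the complex plane). In either case, by continuity the limiting eigenvalues of $M(t_0)$ satisfy $\lambda(t_0)=\overline{\lambda(t_0)}\in\mathbb{R}$, producing a repeated real eigenvalue of $M(t_0)\in\mathcal{Q}(\mathcal{A})$. This contradicts the hypothesis that $\mathcal{A}$ does not allow repeated real eigenvalues. Hence $i_r(M(t))$ is constant on $[0,1]$, giving $i_r(A)=i_r(B)$, so $\mathcal{A}$ is $k$-consistent for $k=i_r(A)$.

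The only delicate point is formalizing the continuity-of-eigenvalues argument that a change in $i_r$ forces a repeated real eigenvalue. I would make this rigorous by labeling eigenvalues locally via continuous branches (using, e.g., the fact that the roots of a monic polynomial with continuously varying real coefficients can be indexed by continuous functions on any simply connected parameter interval, possibly after local reordering) and noting that a nonreal branch $\lambda(t)$ can become real only through its imaginary part passing through $0$; at that instant $\lambda(t)$ and its conjugate branch meet, yielding a real eigenvalue of algebraic multiplicity at least two. Everything else is routine.
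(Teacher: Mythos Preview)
The paper does not supply its own proof of this lemma; it is quoted from \cite{1} and used as a black box. So there is nothing to compare against, and the question is simply whether your argument is correct. It is.

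Your proof is the standard one: convexity of $\mathcal{Q}(\mathcal{A})$ gives a path $M(t)$ between any two realizations, and continuity of eigenvalues forces a repeated real eigenvalue at any parameter where $i_r$ jumps. One small comment on the ``delicate point'': invoking globally continuous eigenvalue branches is slightly more than you need and is itself a subtle claim (branches can collide and re-sort). A cleaner local argument avoids branches entirely. At a fixed $t_0$, the real eigenvalues $\mu_1<\cdots<\mu_k$ of $M(t_0)$ are simple by hypothesis, and the nonreal eigenvalues have $|\mathrm{Im}|\ge\delta>0$. Choose pairwise disjoint disks $D(\mu_i,\epsilon)$ with $\epsilon<\delta$, each symmetric about $\mathbb{R}$. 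For $t$ near $t_0$, each such disk contains exactly one eigenvalue of $M(t)$ (by continuity of the characteristic polynomial and Rouch\'e/argument principle); that eigenvalue must be real, since a nonreal eigenvalue in the disk would drag its conjugate in as well, giving two. The remaining $n-k$ eigenvalues lie within $\epsilon$ of the nonreal spectrum of $M(t_0)$, hence stay off the real axis. Thus $i_r(M(t))=k$ near $t_0$, so $i_r$ is locally constant on $[0,1]$, hence constant. This is exactly your conclusion, just with the branch bookkeeping replaced by a counting argument.
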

The converse is not true in general, even for symmetric tree sign patterns. The following example illustrates this fact.
\begin{eg}[Example 1.6, \cite{1}]\rm 
Consider,
$$A=\begin{bmatrix}
	0&+&+&+\\+&0&0&0\\+&0&0&0\\+&0&0&0
\end{bmatrix}$$
Then $A$ is consistent with $S_P=(4,0)$; however, $A$ allows repeated zero eigenvalues.  
\end{eg}

The converse of Lemma \ref{l1n}, whether it holds for tridiagonal sign patterns with a $0$-diagonal, remains an interesting open problem. However, if the order of such patterns is less than or equal to $5$, then we have the following result.

\begin{theorem}\label{nth3.3}
If $\mathcal{P}\in \mathcal{Q}_n$ is an irreducible, tridiagonal sign pattern with a $0$-diagonal, where $n\leq 5$, then $\mathcal{P}$ is consistent if and only if $\mathcal{P}$ does not allow repeated real eigenvalues.
\end{theorem}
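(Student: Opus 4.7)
The plan is to prove the biconditional by treating the two directions separately. The reverse implication is immediate: if $\mathcal{P}$ does not allow repeated real eigenvalues, then $\mathcal{P}$ is consistent by Lemma~\ref{l1n}, with no use of the tridiagonal hypothesis or the bound $n \le 5$.

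For the forward implication I would exploit the finite classification already produced inside the proof of Theorem~\ref{th3.1}. For each $n\in\{2,3,4,5\}$, any consistent, irreducible, tridiagonal sign pattern with a $0$-diagonal is equivalent to one of the explicit representatives $\mathcal{P}_1,\mathcal{P}_2,\mathcal{P}_3,\mathcal{P}_4$ listed there; so it suffices to verify, representative by representative, that no $B\in\mathcal{Q}(\mathcal{P}_j)$ has a repeated real eigenvalue. For the ``all $+$'' representative $\mathcal{P}_1$, a positive diagonal similarity brings $B$ into a symmetric irreducible tridiagonal Jacobi matrix with positive off-diagonal entries, which is classically known to have $n$ simple real eigenvalues. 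Dually, for the ``all $-$'' representative $\mathcal{P}_2$, a positive diagonal similarity brings $B$ into a real skew-symmetric irreducible tridiagonal matrix, whose spectrum lies on the imaginary axis and contains at most a single (simple) $0$ eigenvalue when $n$ is odd; in both cases repeated real eigenvalues are excluded.

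The remaining mixed-sign representatives are $\mathcal{P}_3$ for $n=4,5$ and $\mathcal{P}_4$ for $n=4$. For $\mathcal{P}_3$, the characteristic polynomial computed in the proof of Theorem~\ref{th3.1} factors as $\Ch_B(x)=x^{s}\,q(x^2)$ with $s\in\{0,1\}$ and $q(y)=y^2-\alpha y+\beta$, where $\beta\in\{-ac,\,-(ac+ad+bd)\}$ is strictly negative and the discriminant $\alpha^2-4\beta\ge -4\beta>0$. Hence $q$ has two distinct real roots, exactly one positive and one negative, so the real roots of $\Ch_B(x)$ are $\pm\sqrt{y_+}$ together with $0$ when $s=1$, all distinct. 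For $\mathcal{P}_4$ with $n=4$ I would invoke Remark~\ref{th2.3}: since $\mathcal{P}_4=(\mathcal{P}_3)_-$, the eigenvalues of any $B_-\in\mathcal{Q}(\mathcal{P}_4)$ are $i$ times the eigenvalues of the corresponding $B\in\mathcal{Q}(\mathcal{P}_3)$, so the distinct-real-eigenvalue conclusion for $\mathcal{P}_3$ transfers directly to $\mathcal{P}_4$.

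I do not expect any genuine obstacle; the only input beyond the classification already in Theorem~\ref{th3.1} is the strict positivity of $\alpha^2-4\beta$ together with the strict negativity of $\beta$ in the mixed-sign cases, both of which follow immediately from $a,b,c,d>0$. The proof is therefore a systematic verification that in every branch of the finite classification the characteristic polynomial excludes the ``double real root'' scenario, and the hardest bookkeeping item is simply keeping the signs of $\alpha^2-4\beta$ and $\beta$ straight across the cases $n=4$ and $n=5$.
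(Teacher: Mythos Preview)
Your proposal is correct and follows essentially the same approach as the paper: both directions are handled identically (Lemma~\ref{l1n} for the reverse, the finite classification from Theorem~\ref{th3.1} for the forward), and the case-by-case verification of simple real eigenvalues mirrors the paper's treatment, including the use of $\mathcal{P}_4=(\mathcal{P}_3)_-$ to transfer the conclusion. The only cosmetic differences are that you invoke the Jacobi simple-spectrum theorem and an explicit discriminant computation where the paper instead uses a diagonalizability/rank argument and the Descartes count from the proof of Theorem~\ref{th3.1}; these are interchangeable tools for the same verification.
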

\begin{proof}
If $\mathcal{P}$ does not allow repeated real eigenvalues, then by Lemma \ref{l1n}, $\mathcal{P}$ is consistent. To prove the converse, suppose that $\mathcal{P}$ is consistent.

\textbf{Case~1:} $n=2$.\\
The only irreducible, tridiagonal sign patterns with a $0$-diagonal up to equivalence are
$$\mathcal{P}_1=\begin{bmatrix}
	0&+\\+&0
\end{bmatrix},~\mathcal{P}_2=\begin{bmatrix}
	0&-\\+&0
\end{bmatrix}.$$ Clearly $\mathcal{P}_1,\mathcal{P}_2$ are consistent and any $B \in \mathcal{Q}(\mathcal{P}_1)$ (or, $\mathcal{Q}(\mathcal{P}_2)$) has all distinct eigenvalues. 

\textbf{Case~2:} $n=3$. \\
From the proof of Theorem \ref{th3.1} it follows that the only consistent irreducible, tridiagonal sign patterns with a $0$-diagonal up to equivalence are $$\mathcal{P}_1=\begin{bmatrix}
	0&+&0\\+&0&+\\0&+&0
\end{bmatrix}, ~\mathcal{P}_2= \begin{bmatrix}
	0&-&0\\+&0&-\\0&+&0
\end{bmatrix}.$$
Since every $B\in \mathcal{Q}(\mathcal{P}_1)$ (respectively, $B\in \mathcal{Q}(\mathcal{P}_2)$) is similar to a real symmetric (respectively, skew-symmetric) matrix, it is diagonalizable. Hence, the algebraic multiplicity of any eigenvalue $\lambda$ of $B$ is equal to $\rank(B-\lambda I)=1$. So, $B$ has all distinct eigenvalues. 

\textbf{Case~3:} $n=4$.\\
From the proof of Theorem \ref{th3.1} it follows that the only consistent irreducible, tridiagonal sign patterns with a $0$-diagonal up to equivalence are $$\mathcal{P}_1=\begin{bmatrix}
	0&+&0&0\\+&0&+&0\\0&+&0&+\\0&0&+&0
\end{bmatrix}, ~\mathcal{P}_2=\begin{bmatrix}
	0&-&0&0\\+&0&-&0\\0&+&0&-\\0&0&+&0
\end{bmatrix}, ~\mathcal{P}_3=\begin{bmatrix}
	0&+&0&0\\+&0&+&0\\0&+&0&-\\0&0&+&0
\end{bmatrix},~\mathcal{P}_4=\begin{bmatrix}
	0&-&0&0\\+&0&-&0\\0&+&0&+\\0&0&+&0
\end{bmatrix}.$$ 
Every $B \in \mathcal{Q}(\mathcal{P}_1)$ (or, $\mathcal{Q}(\mathcal{P}_2)$) has all distinct eigenvalues, as discussed in case~2. From the proof of Theorem~\ref{th3.1}, it follows that any $B \in \mathcal{Q}(\mathcal{P}_3)$ also has all distinct eigenvalues. Since ${\mathcal{P}_4} = {\mathcal{P}_3}_-$, therefore the eigenvalues of any $B \in \mathcal{Q}(\mathcal{P}_4)$ are all distinct.

\textbf{Case~4:} $n=5$.\\
From the proof of Theorem \ref{th3.1} it follows that the only consistent irreducible, tridiagonal sign patterns with a $0$-diagonal up to equivalence are $$\mathcal{P}_1=\begin{bmatrix}
	0&+&0&0&0\\+&0&+&0&0\\0&+&0&+&0\\0&0&+&0&+\\0&0&0&+&0
\end{bmatrix},~ \mathcal{P}_2=\begin{bmatrix}
	0&-&0&0&0\\+&0&-&0&0\\0&+&0&-&0\\0&0&+&0&-\\0&0&0&+&0
\end{bmatrix},~ \mathcal{P}_3=\begin{bmatrix}
	0&+&0&0&0\\+&0&+&0&0\\0&+&0&-&0\\0&0&+&0&-\\0&0&0&+&0
\end{bmatrix}.$$
Every $B \in \mathcal{Q}(\mathcal{P}_1)$ (or, $\mathcal{Q}(\mathcal{P}_2)$) has all distinct eigenvalues, as discussed in case~2. From the proof of Theorem~\ref{th3.1}, it follows that any $B \in \mathcal{Q}(\mathcal{P}_3)$ also has all distinct eigenvalues.
\end{proof}



The next few results are useful in identifying certain tridiagonal sign patterns with a $0$-diagonal that are not consistent.

\begin{lemma}\label{xnl3}
Let $\mathcal{P} \in \mathcal{Q}_n$ be a sign pattern, whose underlying signed directed graph is $D$. Suppose that the maximum length of composite cycles in $D$ is $m$, with $m$ even. If $D$ contains composite cycles, $\Gamma_1,\Gamma_2$, each of length $m$ and consisting only of $2$-cycles, such that $\Gamma_1,\Gamma_2$ have different number of negative $2$-cycles, then $\mathcal{P}$ is not consistent.
\end{lemma}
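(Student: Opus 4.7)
The plan is to exhibit two matrices $B_{\Gamma_1}(\epsilon), B_{\Gamma_2}(\epsilon) \in \mathcal{Q}(\mathcal{P})$ constructed via the perturbation scheme in \eqref{xe3}, and to show that $i_r(B_{\Gamma_1}(\epsilon)) \neq i_r(B_{\Gamma_2}(\epsilon))$. The starting observation is that since $m$ is the maximum length of composite cycles in $D$, we have $E_k(B) = 0$ for every $k > m$ and every $B \in \mathcal{Q}(\mathcal{P})$. Thus $\Ch_B(x) = x^{n-m}\, g_B(x)$ for some monic polynomial $g_B(x)$ of degree $m$; in particular, every matrix in $\mathcal{Q}(\mathcal{P})$ has exactly $n - m$ of its eigenvalues pinned at $0$ (hence real), and the real/nonreal split of the remaining eigenvalues depends only on the roots of $g_B(x)$.

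For $i = 1, 2$, let $k_i$ denote the number of negative $2$-cycles in $\Gamma_i$, so $\Gamma_i$ contains $m/2 - k_i$ positive $2$-cycles. I next unpack the spectrum of $B_{\Gamma_i}(0)$: on each negative $2$-cycle $\gamma_p$ the associated $2\times 2$ block has eigenvalues $\pm 10^p$ (the real square roots of $10^{2p}$), while on each positive $2$-cycle $\gamma_p$ the block has eigenvalues $\pm i \cdot 10^p$ (the purely imaginary square roots of $-10^{2p}$). Hence $B_{\Gamma_i}(0)$ has exactly $2 k_i$ nonzero real eigenvalues, $m - 2 k_i$ purely imaginary eigenvalues organized in complex conjugate pairs, and $n - m$ zero eigenvalues; moreover, because the magnitudes $10, 10^2, \dots, 10^{m/2}$ are pairwise well separated, all of these nonzero eigenvalues are algebraically simple.

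Now pass to $\epsilon > 0$ sufficiently small. The factor $x^{n-m}$ of $\Ch_{B_{\Gamma_i}(\epsilon)}(x)$ is preserved exactly by the first paragraph, contributing $n - m$ real eigenvalues. The remaining $m$ eigenvalues of $B_{\Gamma_i}(\epsilon)$ stay close to the nonzero eigenvalues of $B_{\Gamma_i}(0)$: the simple complex conjugate pairs of purely imaginary eigenvalues remain nonreal under small perturbation, while the observation stated in the paragraph immediately after \eqref{xe3} rules out the only mechanism by which the real pairs $\pm 10^p$ could become nonreal (mutual merging into a conjugate pair), and the magnitude separation prevents any cross-pair merger when $\epsilon$ is small enough. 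Therefore $i_r(B_{\Gamma_i}(\epsilon)) = (n - m) + 2 k_i$ for $i = 1, 2$. Since $k_1 \neq k_2$ by hypothesis, these two counts differ, so $B_{\Gamma_1}(\epsilon)$ and $B_{\Gamma_2}(\epsilon)$ witness that $\mathcal{P}$ is not consistent. The main point requiring care is the persistence of the $2 k_i$ real simple nonzero eigenvalues under the perturbation; this is exactly what the explicit non-merging statement given after \eqref{xe3} provides, combined with the magnitude separation built into \eqref{xe3} itself.
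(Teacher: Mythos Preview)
Your proof is correct and follows essentially the same perturbation approach as the paper: construct $B_{\Gamma_i}(0)$ via \eqref{xe3}, count its real eigenvalues as $(n-m)+2k_i$, and invoke the non-merging statement after \eqref{xe3} to carry this count to $B_{\Gamma_i}(\epsilon)$. Your explicit observation that $\Ch_B(x)=x^{n-m}g_B(x)$ for all $B\in\mathcal{Q}(\mathcal{P})$ is a nice touch that cleanly pins the $n-m$ zero eigenvalues as real for every $\epsilon$; the paper leaves this step implicit.
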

\begin{proof}
Let $\Gamma_1 = \alpha_{i_1} \alpha_{i_2} \cdots \alpha_{i_{\frac{m}{2}}}$ and $\Gamma_2 = \alpha_{j_1} \alpha_{j_2} \cdots \alpha_{j_{\frac{m}{2}}}$ be two composite cycles in $D$ consisting only of $2$-cycles of length $m$, where $\alpha_{i_1} \alpha_{i_2},..., \alpha_{i_{\frac{m}{2}}}, \alpha_{j_1}, \alpha_{j_2}, ...,\alpha_{j_{\frac{m}{2}}}$ are $2$-cycles. Let $\Gamma_1$ (respectively, $\Gamma_2$) contain $k_1$ (respectively, $k_2$) numbers of negative $2$-cycles, with $k_1\neq k_2$.

Define the real matrix $B_{\Gamma_1}(0)=[b_{\Gamma_1}(0)_{ij}]$ as in \eqref{xe3}.
Then the eigenvalues of $B_{\Gamma_1}(0)$ are the second complex roots of $10^{2p}$ or $-10^{2p}$, depending on the sign of $\alpha_p$, for $p = 1,2,\dots,\frac{m}{2}$. Hence, $B_{\Gamma_1}(0)$ has $(n - m) + 2k_1$ real eigenvalues.
For $\epsilon>0$, define $B_{\Gamma_1}(\epsilon)\in \mathcal{Q}(\mathcal{P})$ as in \eqref{e2} with $\gamma$ replaced by $\Gamma_1$. Then for $\epsilon > 0$ sufficiently small, eigenvalues of $B_{\Gamma_1}(\epsilon)$ remain close to the eigenvalues of $B_{\Gamma_1}(0)$, therefore all the nonzero real eigenvalues of $B_{\Gamma_1}(\epsilon)$ are algebraically simple. In addition, the real eigenvalues of $B_{\Gamma_1}(0)$ of the form $\lambda$, $-\lambda$ cannot merge to form a nonreal conjugate pair in $B_{\Gamma_1}(\epsilon)$. Also, the nonreal eigenvalues of $B_{\Gamma_1}(0)$ cannot give real eigenvalues of $B_{\Gamma_1}(\epsilon)$ for $\epsilon>0$ sufficiently small. Therefore, $S_{B_{\Gamma_1}(\epsilon)}=S_{B_{\Gamma_1}(0)}=(n-m+2k_1,m-2k_1)$.

Similarly, for $\Gamma_2$, there exists $\epsilon'>0$ such that $B_{\Gamma_2}(\epsilon')\in \mathcal{Q}(\mathcal{P})$ and $S_{B_{\Gamma_2}(\epsilon')}=S_{B_{\Gamma_2}(0)}=(n-m+2k_2,m-2k_2)$. Since $k_1\neq k_2$, therefore $\mathcal{P}$ is not consistent.
\end{proof}

    \begin{eg}\rm 
	Suppose that $\mathcal{P}$ is an irreducible, tridiagonal sign pattern with a $0$-diagonal, whose underlying signed directed graph $D$ is given in Fig.\ref{x1.1}.
		$$\begin{minipage}{.5\textwidth}
	$\mathcal{P}=\begin{bmatrix}
		0&+&0&0&0\\ +&0&+&0&0\\ 0&+&0&-&0\\ 0&0&+&0&+\\0&0&0&+&0
	\end{bmatrix}$
\end{minipage}
\hspace{-1cm}
\begin{minipage}{.45\textwidth}
	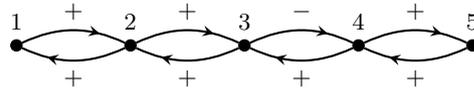
\begin{figure}[H] \label{fign1}
		\tikzset{node distance=1cm}
		\centering
		\usetikzlibrary{decorations.markings}

\tikzset{
  vertex/.style={
    draw, circle, fill=black,
    inner sep=0pt, minimum size=0.15cm
  },
  edge/.style={
    thick,
    postaction={
      decorate,
      decoration={
        markings,
        mark=at position 0.75 with {\arrow{stealth}}
      }
    }
  }
}

\begin{tikzpicture}[scale=1, every node/.style={font=\small}]

\node[vertex,label=above:1] (v1) at (-3,0) {};
\node[vertex,label=above:2] (v2) at (-1.5,0) {};
\node[vertex,label=above:3] (v3) at (0,0) {};
\node[vertex,label=above:4] (v4) at (1.5,0) {};
\node[vertex,label=above:5] (v5) at (3,0) {};

\draw[edge, bend left=25] (v1) to node[above] {$+$} (v2);
\draw[edge, bend left=25] (v2) to node[below] {$+$} (v1);

\draw[edge, bend left=25] (v2) to node[above] {$+$} (v3);
\draw[edge, bend left=25] (v3) to node[below] {$+$} (v2);

\draw[edge, bend left=25] (v3) to node[above] {$-$} (v4);
\draw[edge, bend left=25]  (v4) to node[below] {$+$} (v3);

\draw[edge, bend left=25] (v4) to node[above] {$+$} (v5);
\draw[edge, bend left=25] (v5) to node[below] {$+$} (v4);

\end{tikzpicture}
\caption{The signed directed graph $D$ of $\mathcal{P}$.}

        \label{x1.1}
        \end{figure}
		\end{minipage}   $$   
	Then the maximum length of composite cycles in $D$ is $4$, and $D$ contain two composite cycles, $(1,2)(3,4)$ and $(2,3)(4,5)$, both of length $4$, with the number of negative $2$-cycles equal to $1$ and $2$, respectively. Therefore, $\mathcal{P}$ satisfies the conditions of Lemma~\ref{xnl3}, and hence $\mathcal{P}$ is not consistent.
\end{eg}

Although for tridiagonal sign patterns with a $0$-diagonal, whether consistency requires such patterns to have distinct eigenvalues is not known, however, the following result shows that the multiplicity of zero as an eigenvalue can be at most one.

\begin{theorem} \label{l3.1n}
Let $\mathcal{P}\in \mathcal{Q}_n$ be an irreducible, tridiagonal sign pattern with a $0$-diagonal. Then $\mathcal{P}$ is consistent only if for any $A\in \mathcal{Q}(\mathcal{P})$ has at most, one zero eigenvalue.


\end{theorem}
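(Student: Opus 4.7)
The plan is to reduce the statement to a cancellation analysis of the coefficient $E_{n-1}(A)$ in $\Ch_A(x)$, and then invoke Lemma~\ref{xnl3} whenever cancellation is forced. The key structural fact I would use repeatedly is that for any $A \in \mathcal{Q}(\mathcal{P})$, the signature similarity $DAD = -A$ with $D = \mathrm{diag}(1,-1,1,-1,\ldots)$ gives $\Ch_A(x) = (-1)^n \Ch_A(-x)$, so $\Ch_A$ contains only powers of $x$ of the same parity as $n$.

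First I would dispose of the case $n$ even. In the signed digraph $D$ of $\mathcal{P}$ every simple cycle is a $2$-cycle $(i,i+1)$, so the only composite cycle of length $n$ is the unique perfect matching $\{1,2\}\{3,4\}\cdots\{n-1,n\}$, and the Leibniz expansion collapses to $\det A = (-1)^{n/2} a_1b_1 a_3b_3 \cdots a_{n-1}b_{n-1} \neq 0$. Hence $0$ is not an eigenvalue of any $A\in \mathcal{Q}(\mathcal{P})$ and the claim is vacuous.

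For $n$ odd, write $\Ch_A(x) = x \cdot q(x^2)$ with $\deg q = (n-1)/2$; then $0$ has multiplicity at least two iff $q(0) = 0$ iff the coefficient of $x$ in $\Ch_A(x)$, which is $\pm E_{n-1}(A)$, vanishes. Expanding $E_{n-1}(A) = \sum_{k=1}^{n} \det A[[n]\setminus\{k\}]$, each minor splits as the product of the blocks on $\{1,\ldots,k-1\}$ and $\{k+1,\ldots,n\}$, and is nonzero only when both sizes are even, i.e.\ when $k$ is odd. For each such $k$ the minor equals $(-1)^{(n-1)/2} T_k$, where $T_k = (a_1b_1 a_3b_3 \cdots a_{k-2}b_{k-2})(a_{k+1}b_{k+1}\cdots a_{n-1}b_{n-1})$ corresponds to the unique composite cycle $\Gamma_k$ in $D$ of length $n-1$ obtained by deleting vertex $k$; note that $n-1$ is even and is the maximum composite-cycle length in $D$, since no length-$n$ cycle exists when $n$ is odd.

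Assume now that some $A_0 \in \mathcal{Q}(\mathcal{P})$ has $E_{n-1}(A_0) = 0$; I would show $\mathcal{P}$ is not consistent. Each $T_k(A_0)$ is nonzero with sign determined only by $\mathcal{P}$, so the vanishing of $\sum_k T_k$ forces the existence of odd indices $k_1 \neq k_2$ with $\sgn T_{k_1} \neq \sgn T_{k_2}$. Using the convention $\sgn(\gamma) = -p_{ij}p_{ji}$ for a $2$-cycle, a direct parity count gives $\sgn T_k = (-1)^{n^{+}_k}$ where $n^{+}_k$ is the number of positive $2$-cycles in $\Gamma_k$; since $n^{+}_k + n^{-}_k = (n-1)/2$ is constant in $k$, opposite signs translate to $n^{-}_{k_1}$ and $n^{-}_{k_2}$ having different parities, and in particular different values. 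Lemma~\ref{xnl3}, applied to $\Gamma_{k_1}$ and $\Gamma_{k_2}$, then yields that $\mathcal{P}$ is not consistent, which is the contrapositive of the theorem. The main obstacle I expect is the careful bookkeeping that matches the algebraic sign of $T_k$ to the parity of the number of negative $2$-cycles in $\Gamma_k$ under the paper's sign conventions, and verifying that any zero of $\sum_k T_k$ genuinely forces two terms of opposite sign rather than merely being compatible with them.
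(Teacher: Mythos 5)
Your proof is correct and follows essentially the same route as the paper: reduce to the contrapositive, note that a repeated zero eigenvalue forces $n$ odd and the vanishing of $E_{n-1}(A)$, deduce two oppositely signed composite cycles of length $n-1$ (equivalently, with different numbers of negative $2$-cycles), and invoke Lemma~\ref{xnl3}. The only difference is that you spell out the minor expansion and sign bookkeeping that the paper leaves implicit, which is a welcome but not a structurally different contribution.
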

\begin{proof} Suppose that there exists $A\in \mathcal{Q}(\mathcal{P})$ with more than one zero eigenvalues. Therefore, it follows that  $n$ must be odd.
So, the maximum length of a composite cycle in the signed directed graph $D$ of $\mathcal{P}$ is $n-1$, which is even, consisting only of disjoint $2$-cycles. Moreover, since $\mathcal{P}$ allows repeated zero eigenvalues, the underlying signed directed graph $D$ contains two composite cycles of length $n-1$ with opposite signs. Consequently, $D$ has two composite cycles of length $n-1$, each consisting only of $2$-cycles but with different numbers of negative $2$-cycles. Therefore, by Lemma~\ref{xnl3}, $\mathcal{P}$ is not consistent.
\end{proof}

The conclusion of Theorem \ref{l3.1n} is not necessarily true for tree sign patterns with a $0$-diagonal.

\begin{eg}\rm
Let $$\mathcal{P}=\begin{bmatrix}
	0&+&+&+&+\\+&0&0&0&0\\+&0&0&0&0\\+&0&0&0&0\\+&0&0&0&0
\end{bmatrix}.$$ Then $\rank(B)=2$ for all $B\in \mathcal{Q}(\mathcal{P})$, so $\mathcal{P}$ allows repeated zero eigenvalues. However, since $\mathcal{P}$ is a symmetric tree sign pattern with a $0$-diagonal, by Theorem 1.6 \cite{1991}, it follows that $\mathcal{P}$ is consistent.
\end{eg}

Our next result identifies certain patterns that cannot be submatrices of a consistent irreducible, tridiagonal sign pattern with a $0$-diagonal.
For irreducible path sign patterns with a $0$-diagonal that require a unique inertia, we have the following result from \cite{2025}.

\begin{theorem}[Theorem~2.9, \cite{2025}]\label{nth3.4}
Let $\mathcal{P}\in \mathcal{Q}_n$ be an irreducible, tridiagonal sign pattern with a $0$-diagonal. If $\mathcal{P}_4$ or ${\mathcal{P}_4}_-$ is a submatrix of $\mathcal{P}$, then $\mathcal{P}$ does not require a unique inertia, where
$$\mathcal{P}_4=\begin{bmatrix}
	0&+&0&0\\+&0&-&0\\0&+&0&+\\0&0&+&0
\end{bmatrix}.$$
\end{theorem}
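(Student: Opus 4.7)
The plan is to exhibit two matrices $\tilde A_{1,\epsilon}, \tilde A_{2,\epsilon} \in \mathcal{Q}(\mathcal{P})$ with different inertias, by lifting to $\mathcal{Q}(\mathcal{P})$ the failure of unique inertia already present in $\mathcal{Q}(\mathcal{P}_4)$ via a block-decoupling perturbation.

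First I would verify that $\mathcal{Q}(\mathcal{P}_4)$ itself admits two distinct inertias. Every $B\in\mathcal{Q}(\mathcal{P}_4)$ is similar, by a positive diagonal signature, to the matrix $B(a,b,c)$ with off-diagonal entries $b_{12}=1,\,b_{21}=a,\,b_{23}=-1,\,b_{32}=b,\,b_{34}=1,\,b_{43}=c$ and $a,b,c>0$; a direct calculation gives $\Ch_{B(a,b,c)}(x)=x^4+(b-a-c)x^2+ac$. The roots of $q(y)=y^2+(b-a-c)y+ac$ govern $\In(B)$: a complex-conjugate pair of roots yields four complex eigenvalues $\pm\alpha\pm\beta i$ with $\alpha\beta\ne 0$ and $\In=(2,2,0)$, while two distinct negative real roots yield four purely imaginary eigenvalues and $\In=(0,0,4)$. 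Taking $(a,b,c)=(1,0.01,1)$ produces an $A_1$ with $\In(A_1)=(2,2,0)$, while $(a,b,c)=(1,10,1)$ produces an $A_2$ with $\In(A_2)=(0,0,4)$. The analogous computation for ${\mathcal{P}_4}_-$ yields two matrices realizing the same pair of inertias.

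Now assume $\mathcal{P}_4$ is the principal submatrix of $\mathcal{P}$ on four consecutive indices $\{k,k+1,k+2,k+3\}$. For $i\in\{1,2\}$, define $\tilde A_{i,\epsilon}\in\mathcal{Q}(\mathcal{P})$ by inserting $A_i$ into the central block, filling every other off-diagonal entry with $\pm t$ matching the signs of $\mathcal{P}$ for a generic $t>0$, and setting the (at most four) coupling entries bordering the block to $\pm\epsilon$ with signs from $\mathcal{P}$. At $\epsilon=0$ the matrix decomposes as $B_L\oplus A_i\oplus B_R$, with $B_L,B_R$ fixed tridiagonal $0$-diagonal matrices on the remaining vertices; for generic $t$ the three spectra are mutually disjoint and each internally simple. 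Every $C\in\mathcal{Q}(\mathcal{P})$ satisfies $SCS=-C$ with $S=\operatorname{diag}(1,-1,1,-1,\dots)$ (all off-diagonal entries flip sign under this conjugation and the diagonal is zero), so the spectrum of $C$ is symmetric under $\lambda\mapsto-\lambda$; combined with complex conjugation, every simple eigenvalue belongs to a real pair $\{\pm\lambda\}$, an imaginary pair $\{\pm i\mu\}$, or a complex quadruple $\{\pm\alpha\pm\beta i\}$ with $\alpha\beta\ne 0$. For $\epsilon>0$ small, a simple eigenvalue cannot migrate between these classes, as any such transition (e.g.\ an imaginary $i\mu$ becoming real) would require a displacement of at least $|\mu|$, contradicting the continuity of simple eigenvalue branches. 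Consequently $i_+(\tilde A_{i,\epsilon})= i_+(A_i)+i_+(B_L)+i_+(B_R)+\delta_i$, where $\delta_i\in\{0,1\}$ records how any zero eigenvalues coming from odd-sized $B_L,B_R$ sub-blocks split under the perturbation. Since $i_+(A_1)-i_+(A_2)=2$ and $|\delta_1-\delta_2|\le 1$, we conclude $i_+(\tilde A_{1,\epsilon})\ne i_+(\tilde A_{2,\epsilon})$, so the inertias differ. The ${\mathcal{P}_4}_-$ case is handled verbatim with the corresponding pair of matrices in $\mathcal{Q}({\mathcal{P}_4}_-)$.

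The main obstacle is the structural-preservation step: making rigorous that a simple eigenvalue cannot switch between the real-pair, imaginary-pair, and complex-quadruple classes under small perturbations inside $\mathcal{Q}(\mathcal{P})$, and tallying the contribution $\delta_i$ of zero eigenvalues arising from potentially odd-sized $B_L,B_R$. A naive ``inertias add'' argument would overlook the $\delta_i$ term, but the two-unit gap between $i_+(A_1)$ and $i_+(A_2)$ easily absorbs any discrepancy $|\delta_1-\delta_2|\le 1$, giving a strictly positive inertia gap in every subcase.
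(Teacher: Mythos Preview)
The paper does not actually prove this statement: it is quoted verbatim as Theorem~2.9 from the external reference~\cite{2025} and then invoked (together with Remark~\ref{th2.3}) to deduce Theorem~\ref{th3.51n}. So there is no in-paper argument to compare against.

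Your direct argument is essentially correct and self-contained. The computation of $\Ch_{B(a,b,c)}$ and the two inertias $(2,2,0)$, $(0,0,4)$ for $\mathcal{P}_4$ is accurate; the structural symmetry $S\tilde A S=-\tilde A$ for tridiagonal $0$-diagonal patterns is exactly what forces eigenvalues into real pairs, imaginary pairs, or quadruples, and the class rigidity of simple eigenvalues under small $\epsilon$ follows cleanly from uniqueness of the analytic branch near each simple eigenvalue (if $\lambda(\epsilon)$ is the branch near a real simple $\lambda$, then $\overline{\lambda(\epsilon)}$ is also a branch near $\lambda$, hence equals $\lambda(\epsilon)$; similarly on the imaginary axis). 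The bookkeeping with $\delta_i\in\{0,1\}$ correctly absorbs the splitting of the at most two zero eigenvalues coming from odd-sized flanking blocks, and the gap $i_+(A_1)-i_+(A_2)=2$ survives.

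Two small points deserve cleanup. First, your specific parameters $(1,0.01,1)$ and $(1,10,1)$ do \emph{not} transfer verbatim to ${\mathcal{P}_4}_-$: with edge signs $-,+,-$ the characteristic polynomial becomes $x^4+(a-b+c)x^2+ac$, and $(1,10,1)$ then gives two positive $y$-roots, hence four real eigenvalues and again $\In=(2,2,0)$. You need, e.g., $(a,b,c)=(4,0.5,1)$ (so that $b<(\sqrt a-\sqrt c)^2$) to realize $\In=(0,0,4)$ there. Second, filling $B_L,B_R$ uniformly with $\pm t$ makes their spectra scale with $t$ but does not guarantee internal simplicity, since $B_L(1),B_R(1)$ are fixed matrices dictated by the signs of $\mathcal{P}$ and could in principle have repeated eigenvalues; take instead generic positive entries (or successive powers $t,t^2,\dots$) in those blocks so that simplicity and mutual disjointness hold for all but finitely many $t$.
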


The next result shows that an irreducible, tridiagonal sign pattern with a $0$-diagonal, if it is consistent, cannot have $\mathcal{P}_4, \mathcal{P}_{4_-}$ as a submatrix.

\begin{theorem} \label{th3.51n}
Let $\mathcal{P}\in \mathcal{Q}_n$ be an irreducible, tridiagonal sign pattern with a $0$-diagonal. If $\mathcal{P}_4$ or ${\mathcal{P}_4}_-$ is a submatrix of $\mathcal{P}$, then $\mathcal{P}$ is not consistent, where
 $$\mathcal{P}_4=\begin{bmatrix}
	0&+&0&0\\+&0&-&0\\0&+&0&+\\0&0&+&0
\end{bmatrix}.$$
\end{theorem}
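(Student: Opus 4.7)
The plan is to lift the non-consistency of $\mathcal{P}_4$ itself to any tridiagonal $\mathcal{P}$ containing it, via a block-diagonal perturbation. First I would establish that $\mathcal{P}_4$ is not consistent. By diagonal similarity, any $B\in\mathcal{Q}(\mathcal{P}_4)$ is similar to the matrix with subdiagonal entries $1,1,1$ and superdiagonal $a,-b,c$ for some $a,b,c>0$, and the tridiagonal recurrence gives $\Ch_B(x)=x^4-(a-b+c)x^2+ac$. Setting $y=x^2$ yields $f(y)=y^2-(a-b+c)y+ac$, whose root product is $ac>0$, so $i_r(B)\in\{0,4\}$, and both values are realised. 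Taking $(a,b,c)=(4,\tfrac12,1)$ makes $f$ have two distinct positive roots and hence produces a matrix $B_0^{(1)}\in\mathcal{Q}(\mathcal{P}_4)$ with $i_r(B_0^{(1)})=4$, while $(a,b,c)=(1,1,1)$ makes the roots a non-real conjugate pair, giving $B_0^{(2)}\in\mathcal{Q}(\mathcal{P}_4)$ with $i_r(B_0^{(2)})=0$. In both cases the eigenvalues are simple.

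Because $\mathcal{P}$ is tridiagonal with $0$-diagonal, $\mathcal{P}_4$ can appear only as $\mathcal{P}[I_2]$ with $I_2=\{i,i+1,i+2,i+3\}$. If $n=4$ we are done; otherwise set $I_1=\{1,\dots,i-1\}$ and $I_3=\{i+4,\dots,n\}$ (either possibly empty). Fix $M\in\mathcal{Q}(\mathcal{P}[I_1])$ and $N\in\mathcal{Q}(\mathcal{P}[I_3])$ with simple spectra, chosen generically so that the nonzero eigenvalues of $M$, $N$, $B_0^{(1)}$ and $B_0^{(2)}$ are pairwise distinct in $\mathbb{C}$; this is a dense open condition on $\mathcal{Q}(\mathcal{P}[I_1])\times\mathcal{Q}(\mathcal{P}[I_3])$. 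For $\alpha\in\{1,2\}$ and $\epsilon>0$, define $B^{(\alpha)}(\epsilon)\in\mathcal{Q}(\mathcal{P})$ as the tridiagonal matrix whose diagonal blocks on $I_1,I_2,I_3$ are $M,B_0^{(\alpha)},N$, and whose (at most four) bridge entries at positions $(i-1,i),(i,i-1),(i+3,i+4),(i+4,i+3)$ equal $\epsilon\,\sgn(\mathcal{P}_{jk})$.

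The limit $B^{(\alpha)}(0)$ is block-diagonal and its nonzero spectrum equals the disjoint union of the nonzero simple spectra of $M$, $B_0^{(\alpha)}$, and $N$. Perturbation of simple eigenvalues then guarantees that for all sufficiently small $\epsilon>0$ each such nonzero eigenvalue of $B^{(\alpha)}(\epsilon)$ stays simple and keeps its real/non-real character. Hence every nonzero eigenvalue of $M$ or $N$ contributes identically to $i_r\bigl(B^{(1)}(\epsilon)\bigr)$ and $i_r\bigl(B^{(2)}(\epsilon)\bigr)$, while $B_0^{(\alpha)}$ contributes $i_r(B_0^{(\alpha)})$, which is $4$ for $\alpha=1$ and $0$ for $\alpha=2$. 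The only subtlety arises when both $|I_1|$ and $|I_3|$ are odd: each block then forces a $0$ eigenvalue and $0$ is a semi-simple double eigenvalue of $B^{(\alpha)}(0)$ that must split under perturbation. Since $B^{(\alpha)}(\epsilon)$ has real entries, the split yields generically either two real or two complex-conjugate eigenvalues, contributing $0$ or $2$ to $i_r$, so the change in that contribution between $\alpha=1$ and $\alpha=2$ lies in $\{-2,0,2\}$. Consequently,
\[
i_r\bigl(B^{(1)}(\epsilon)\bigr)-i_r\bigl(B^{(2)}(\epsilon)\bigr)\in\{2,4,6\},
\]
which is nonzero, so $\mathcal{P}$ is not consistent. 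The case where ${\mathcal{P}_4}_-$ occurs as the submatrix is handled identically by substituting $-B_0^{(\alpha)}\in\mathcal{Q}({\mathcal{P}_4}_-)$; negation reverses eigenvalues while preserving real/non-real counts.

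The main obstacle is the genericity step together with careful tracking of the double $0$ eigenvalue in the corner case where $|I_1|$ and $|I_3|$ are both odd. The decisive observation is that this ambiguity can shift $i_r(B^{(1)})-i_r(B^{(2)})$ by at most $2$, which is not enough to annihilate the base difference of $4$ coming from the $B_0^{(\alpha)}$ blocks; hence the two constructed matrices in $\mathcal{Q}(\mathcal{P})$ always exhibit distinct eigenvalue frequencies.
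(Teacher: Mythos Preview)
Your approach is genuinely different from the paper's. The paper gives a two-line reduction: it invokes Remark~\ref{th2.3} (the duality $\mathcal{P}\leftrightarrow\mathcal{P}_-$ exchanging ``consistent'' with ``requires a unique inertia'') and then cites Theorem~\ref{nth3.4} from~\cite{2025}, which already says that the presence of $\mathcal{P}_4$ or ${\mathcal{P}_4}_-$ destroys unique inertia. Your argument, by contrast, is self-contained: you exhibit two explicit matrices in $\mathcal{Q}(\mathcal{P}_4)$ with $i_r=4$ and $i_r=0$, and lift them to $\mathcal{Q}(\mathcal{P})$ by a block-diagonal perturbation. The paper's route is shorter but depends on an external result; yours is constructive and stands on its own, at the cost of the perturbation bookkeeping (which you handle correctly, including the double-zero corner case).

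There is, however, a genuine slip in your last paragraph. You claim that the ${\mathcal{P}_4}_-$ case is handled by taking $-B_0^{(\alpha)}\in\mathcal{Q}({\mathcal{P}_4}_-)$, but negation does \emph{not} land you in $\mathcal{Q}({\mathcal{P}_4}_-)$: negating every entry leaves each edge sign $p_{ij}p_{ji}$ unchanged, so $-B_0^{(\alpha)}\in\mathcal{Q}(-\mathcal{P}_4)$, which has the same signed undirected graph as $\mathcal{P}_4$, not the opposite one. The fix is immediate and uses the same method. A matrix in $\mathcal{Q}({\mathcal{P}_4}_-)$ is diagonally similar to one with superdiagonal $(-a,b,-c)$ and subdiagonal $(1,1,1)$, giving $\Ch(x)=x^4+(a-b+c)x^2+ac$; with $y=x^2$ one needs two distinct positive roots for $i_r=4$, which requires $b>a+c$ and $(a-b+c)^2>4ac$. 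For instance $(a,b,c)=(1,5,1)$ yields $i_r=4$ and $(a,b,c)=(1,1,1)$ yields $i_r=0$, both with simple spectra, and the rest of your lifting argument then goes through verbatim.
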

\begin{proof}
The sign pattern $\mathcal{P}$ contains $\mathcal{P}_4$ as a submatrix if and only if $\mathcal{P}_-$ contains $\mathcal{P}_{4-}$ as a submatrix, and by Remark \ref{th2.3}, $\mathcal{P}$ is consistent if and only if $\mathcal{P}_-$ requires a unique inertia. Therefore, the result follows from Theorem \ref{nth3.4}.
\end{proof}

For sign patterns $\mathcal{P}$ of order greater than or equal to $6$, we have the following result from \cite{2025}.

\begin{theorem}[Theorem~2.10, \cite{2025}]\label{nth3.4.4}
Let $\mathcal{P}\in \mathcal{Q}_n$ be an irreducible, tridiagonal sign pattern with a $0$-diagonal. If $\mathcal{P}_6$ or ${\mathcal{P}_6}_-$ is a submatrix of $\mathcal{P}$, then $\mathcal{P}$ does not require a unique inertia, where
$$\mathcal{P}_6=\begin{bmatrix}
	0&+&0&0&0&0\\+&0&-&0&0&0\\0&+&0&-&0&0\\0&0&+&0&-&0\\0&0&0&+&0&+\\0&0&0&0&+&0
\end{bmatrix}.$$
\end{theorem}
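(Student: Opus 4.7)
The plan is first to settle $\mathcal{P}_6$ and ${\mathcal{P}_6}_-$ as base cases by exhibiting, for each, two matrices in its qualitative class with different inertias, and then to lift to any tridiagonal $\mathcal{P}$ containing one of them as a consecutive principal submatrix by a small-perturbation argument. For $\mathcal{P}_6$, after signature and diagonal similarity every $B\in\mathcal{Q}(\mathcal{P}_6)$ can be written as a tridiagonal matrix with super-diagonal $(1,-1,-1,-1,1)$ and sub-diagonal $(a_1,\dots,a_5)$ with each $a_j>0$. The standard three-term recurrence yields $\chi_B(x)=g_B(x^2)$ for an explicit cubic $g_B$ in $y=x^2$, and the inertia is read off from the root types of $g_B$: writing $p,q,r$ for the numbers of positive real, negative real, and complex conjugate-pair roots, one has $i_0(B)=2q$ and $i_+(B)=i_-(B)=p+2r$. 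Taking $a_1=100$, $a_2=a_3=a_4=a_5=1$ gives $g_B(y)=y^3-98y^2-101y+100$, which has two positive and one simple negative real root (inertia $(2,2,2)$). Taking $a_1=a_5=1$, $a_2=a_4=10$ and $a_3$ small positive gives $g_B(y)\to y(y+9)^2$ as $a_3\to 0$; the perturbation yields three simple negative real roots (two splitting from the double root near $-9$ and one coming from $y=0$), giving inertia $(0,0,6)$. Hence $\mathcal{P}_6$ does not require a unique inertia.

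For ${\mathcal{P}_6}_-$, \cite[Lemma~2.4]{2025} supplies, for every $B\in\mathcal{Q}(\mathcal{P}_6)$, a matrix $B_-\in\mathcal{Q}({\mathcal{P}_6}_-)$ with $|b_{ij}|=|b_{ij}^-|$ whose spectrum is $i\cdot\sigma(B)$. Multiplication by $i$ swaps real $\pm$-pairs with pure imaginary $\pm$-pairs and preserves complex quartets, so the two matrices constructed above give matrices in $\mathcal{Q}({\mathcal{P}_6}_-)$ with inertias $(1,1,4)$ and $(3,3,0)$ respectively; thus ${\mathcal{P}_6}_-$ also fails to require a unique inertia.

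Now suppose $\mathcal{P}$ contains $\mathcal{P}_6$ (the case ${\mathcal{P}_6}_-$ is analogous) on the consecutive indices $i,\dots,i+5$. I would construct $B(\epsilon)\in\mathcal{Q}(\mathcal{P})$ by fixing the $6\times 6$ middle block at one of the two inertia-distinct matrices above, choosing the entries of the top and bottom subpath blocks generically but compatibly with the signs of $\mathcal{P}$, and setting the two connecting off-diagonal pairs at $\{i-1,i\}$ and $\{i+5,i+6\}$ (whichever exist) to $\pm\epsilon$ with the signs prescribed by $\mathcal{P}$. At $\epsilon=0$, $B(0)$ decouples as $B_{\mathrm{top}}\oplus B_{\mathrm{mid}}\oplus B_{\mathrm{bot}}$ and $g_{B(0)}=g_{B_{\mathrm{top}}}g_{B_{\mathrm{mid}}}g_{B_{\mathrm{bot}}}$; since $\mathcal{P}$ is a tree $0$-diagonal sign pattern, $\chi_{B(\epsilon)}(x)=x^{n\bmod 2}g_{B(\epsilon)}(x^2)$ for every $\epsilon\geq 0$. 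The generic choice of the outer-block data ensures that $g_{B(0)}$ has only simple nonzero roots, so by continuity of polynomial roots the counts of positive real, negative real, and complex conjugate-pair roots of $g_{B(\epsilon)}$ coincide with those of $g_{B(0)}$ for all sufficiently small $\epsilon>0$. Switching the middle block between the two inertia-distinct choices therefore yields two matrices in $\mathcal{Q}(\mathcal{P})$ with different inertias.

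The main obstacle is the perturbation step: a priori, eigenvalues of $B(0)$ with zero real part could acquire nonzero real parts in $B(\epsilon)$ and disturb the inertia. The tree $0$-diagonal hypothesis rescues the argument, by forcing $\chi_{B(\epsilon)}$ to have only even powers of $x$ and so reducing the inertia question to the real/nonreal root types of $g_{B(\epsilon)}$; a simple real root of $g$ persists on the real axis under small perturbation, pinning pure imaginary $\pm$-pairs to the imaginary axis and real $\pm$-pairs to the real axis. The genericity condition on the outer-block data is used precisely to guarantee this simplicity of the roots of $g_{B(0)}$.
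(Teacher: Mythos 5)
Two preliminary remarks. First, the paper never proves this statement: it is imported verbatim from \cite{2025} (Theorem~2.10 there) and used as a black box (only its consistency analogue, Theorem~\ref{th3.5n}, is derived from it), so your argument can only be judged on its own merits rather than against an in-paper proof. Second, your base cases check out completely: for $B\in\mathcal{Q}(\mathcal{P}_6)$ normalized to superdiagonal $(1,-1,-1,-1,1)$ and subdiagonal $(a_1,\dots,a_5)$ one gets $g_B(y)=y^3-(a_1-a_2-a_3-a_4+a_5)y^2-(a_1a_3+a_1a_4-a_2a_4-a_1a_5+a_2a_5+a_3a_5)y+a_1a_3a_5$; your choices $(100,1,1,1,1)$ and $(1,10,a_3,10,1)$ with $a_3$ small do yield root patterns (two positive, one negative real) and (three negative real), hence inertias $(2,2,2)$ and $(0,0,6)$; and Lemma~2.4 of \cite{2025} correctly transports these to inertias $(1,1,4)$ and $(3,3,0)$ for ${\mathcal{P}_6}_-$. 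The dictionary between root types of $g_B$ and inertia is also right.

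The gap is in the lifting step, concentrated in the sentence ``the generic choice of the outer-block data ensures that $g_{B(0)}$ has only simple nonzero roots.'' There are two problems. (a) Genericity inside a sign class is not free: the bad locus (vanishing discriminant or constant term, shared roots between blocks) is the zero set of polynomials in the entries, and to conclude that a generic sign-compatible choice avoids it you must exhibit \emph{one} admissible choice that does. This existence claim is true for path blocks --- give the edges of a maximum matching pairwise widely separated magnitudes and let the unmatched edges tend to zero, exactly the construction \eqref{xe3}/\eqref{e2} of the paper --- but it genuinely depends on the path structure: for the star tree patterns in Section~3 (every matrix in whose qualitative class has rank $2$, hence a repeated zero eigenvalue) the analogous genericity statement is false, so the assertion cannot simply be waved through. (b) More seriously, your assertion is \emph{false} whenever both outer subpaths have an odd number of vertices (e.g.\ $n=8$ with $\mathcal{P}_6$ occupying rows and columns $2,\dots,7$): then each decoupled outer block of $B(0)$ contributes a zero eigenvalue, $n$ is even, and $g_{B(0)}(y)=y\,g_{B_{\mathrm{top}}}(y)\,g_{B_{\mathrm{mid}}}(y)\,g_{B_{\mathrm{bot}}}(y)$ has the root $y=0$ no matter how the outer data are chosen, so the root counts of $g_{B(\epsilon)}$ cannot be read off from $B(0)$ by your continuity argument alone. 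The theorem still holds in this case, but an extra argument is needed to control where the resulting double zero eigenvalue of $B(0)$ goes under the connecting $\epsilon$-perturbation: either note that an even-order tridiagonal pattern with $0$-diagonal is sign nonsingular, so $g_{B(\epsilon)}(0)=\det B(\epsilon)$ has a pattern-determined sign while $g_{B(\epsilon)}'(0)$ has the same sign for both middle blocks (both satisfy $g_{B_{\mathrm{mid}}}(0)=a_1a_3a_5>0$), forcing the near-zero root to the same side in both constructions; or observe directly that the ambiguity it creates, an inertia shift of $(\delta,\delta,2-2\delta)$ with $\delta\in\{0,1\}$, can never cancel the gap between $(2,2,2)$ and $(0,0,6)$. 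Without (a) and the case analysis (b), the lifting step as written is incomplete.
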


A similar result holds for the consistency of an irreducible, tridiagonal sign pattern with a $0$-diagonal.

\begin{theorem} \label{th3.5n}
Let $\mathcal{P}\in \mathcal{Q}_n$ be an irreducible, tridiagonal sign pattern with a $0$-diagonal. If $\mathcal{P}_6$ or ${\mathcal{P}_6}_-$ is a submatrix of $\mathcal{P}$, then $\mathcal{P}$ is not consistent, where 
$$\mathcal{P}_6=\begin{bmatrix}
	0&+&0&0&0&0\\+&0&-&0&0&0\\0&+&0&-&0&0\\0&0&+&0&-&0\\0&0&0&+&0&+\\0&0&0&0&+&0
\end{bmatrix}.$$
\end{theorem}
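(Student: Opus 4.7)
The approach mirrors exactly the proof of Theorem \ref{th3.51n}. The key observation is the symmetry provided by the negation operation $\mathcal{P} \mapsto \mathcal{P}_-$: since this operation simply flips the sign of every edge in the underlying signed undirected graph, one immediately sees that a tridiagonal sign pattern $\mathcal{P}$ contains $\mathcal{P}_6$ as a submatrix if and only if $\mathcal{P}_-$ contains ${\mathcal{P}_6}_-$ as a submatrix, and vice versa.

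With this combinatorial observation in hand, the plan is to translate the statement about consistency of $\mathcal{P}$ into a statement about unique inertia of $\mathcal{P}_-$, where we can then quote an existing theorem. Specifically, by Remark \ref{th2.3}, $\mathcal{P}$ is consistent if and only if $\mathcal{P}_-$ requires a unique inertia (this is where we use that $\mathcal{P}$ is a tree sign pattern with a $0$-diagonal, which is satisfied since tridiagonal patterns with $0$-diagonal are path sign patterns). So proving that $\mathcal{P}$ is not consistent reduces to proving that $\mathcal{P}_-$ does not require a unique inertia.

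Now apply Theorem \ref{nth3.4.4}: if $\mathcal{P}$ contains $\mathcal{P}_6$ as a submatrix, then $\mathcal{P}_-$ contains ${\mathcal{P}_6}_-$ as a submatrix, and hence $\mathcal{P}_-$ fails to require a unique inertia. Likewise, if $\mathcal{P}$ contains ${\mathcal{P}_6}_-$ as a submatrix, then $\mathcal{P}_-$ contains $\mathcal{P}_6$ as a submatrix, again violating the hypothesis of Theorem \ref{nth3.4.4}. In either case, $\mathcal{P}_-$ does not require a unique inertia, so by Remark \ref{th2.3}, $\mathcal{P}$ is not consistent.

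There is essentially no obstacle here: the entire argument is a two-line reduction, and the real work was already done in establishing Theorem \ref{nth3.4.4} in \cite{2025}. The only point that might need a brief verification is that the negation operation preserves the containment of submatrices (which is immediate from the definitions, since the patterns of $\mathcal{P}$ and $\mathcal{P}_-$ have zeros in exactly the same positions and opposite signs elsewhere), and that Remark \ref{th2.3} applies in the present setting (which it does, since any tridiagonal sign pattern with a $0$-diagonal is a tree sign pattern with a $0$-diagonal).
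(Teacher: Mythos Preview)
Your proposal is correct and follows exactly the same approach as the paper's proof: observe that $\mathcal{P}$ contains $\mathcal{P}_6$ (respectively ${\mathcal{P}_6}_-$) as a submatrix if and only if $\mathcal{P}_-$ contains ${\mathcal{P}_6}_-$ (respectively $\mathcal{P}_6$), use Remark~\ref{th2.3} to convert consistency of $\mathcal{P}$ into the unique-inertia property of $\mathcal{P}_-$, and then invoke Theorem~\ref{nth3.4.4}. The paper's proof is simply a more compressed version of your argument.
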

\begin{proof}
The sign pattern $\mathcal{P}$ contains the submatrix $\mathcal{P}_6$ if and only if $\mathcal{P}_-$ contains ${\mathcal{P}_6}_-$ and by Remark \ref{th2.3}, $\mathcal{P}$ is consistent if and only if $\mathcal{P}_-$ requires a unique inertia. Therefore, the result follows from Theorem \ref{nth3.4.4}.
\end{proof}

Since a sign pattern $\mathcal{P}$ containing $\mathcal{P}_{6}$, ${\mathcal{P}_6}_-$ as a submatrix may not contain $\mathcal{P}_4$, ${\mathcal{P}_{4}}_-$ as a submatrix and vise versa, even patterns which do not contain $\mathcal{P}_4, \mathcal{P}_{4_-}$ or $\mathcal{P}_6, \mathcal{P}_{6_-}$ may well be not consistent.

The following gives a necessary condition for consistency of sign patterns with multiple adjacent leaves in the underlying graph.

\begin{theorem}\label{thx1.1}
Let $\mathcal{P}\in \mathcal{Q}_n$ be an irreducible, combinatorially symmetric sign pattern with a $0$-diagonal, and let $G$ be the underlying signed undirected graph of $\mathcal{P}$. 
If $G$ contains two leaves $u$ and $v$ with a common neighbour $w$, then $\mathcal{P}$ is consistent only if the edges $\{u,w\}$, $\{v,w\}$ have the same sign.
\end{theorem}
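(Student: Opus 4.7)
I will prove the contrapositive: if the edges $\{u,w\}$ and $\{v,w\}$ have opposite signs, then $\mathcal{P}$ is not consistent. Using the equivalence operations (permutation similarity, signature similarity, negation, transposition), I may relabel so that $u=1$, $v=2$, $w=3$, with $\{u,w\}$ positive and $\{v,w\}$ negative. For $B=[b_{ij}]\in\mathcal{Q}(\mathcal{P})$ write $\alpha=b_{13}$, $\beta=b_{31}$, $\gamma=b_{23}$, $\delta=b_{32}$, so $\alpha\beta>0$ and $\gamma\delta<0$; because $u,v$ are leaves, all other entries in rows/columns $1$ and $2$ of $B$ vanish.

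The first step is to derive an explicit formula for $\Ch_B(\lambda)$ via a Schur complement on the partition $\{1,2\}\sqcup\{3,\dots,n\}$. The off-diagonal blocks $X$ and $Y$ each have only one nonzero row (respectively, column), so their product collapses to $YX=(\alpha\beta+\gamma\delta)\,e_1 e_1^{\mathsf{T}}$ on the $(n-2)\times(n-2)$ block indexed by $\{3,\dots,n\}$, where $e_1$ corresponds to the vertex $w$. Applying Schur's formula and the matrix-determinant lemma to this rank-one update of $\det(\lambda I-M)$ produces
$$\Ch_B(\lambda)\;=\;\lambda^2\,\Ch_M(\lambda)\;-\;c\,\lambda\,\Ch_{M'}(\lambda),$$
where $c=\alpha\beta+\gamma\delta$, $M=B[\{3,\dots,n\}]$, and $M'=B[\{4,\dots,n\}]$. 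Since $\alpha\beta>0$ and $\gamma\delta<0$ have opposite signs, the magnitudes of $\alpha,\beta,\gamma,\delta$ can be rescaled so that $c$ attains any prescribed real value, while their signs and the submatrices $M,M'$ remain unchanged.

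Next, I analyze the spectrum of $B$ as $|c|\to\infty$ with the remaining entries of $B$ fixed. Rescaling $\lambda=s\sqrt{|c|}$ in the identity shows that two eigenvalues of $B$ grow like $\pm\sqrt{c}$ (real when $c>0$, purely imaginary when $c<0$), while the other $n-2$ eigenvalues stay bounded and converge to the zeros of $\lambda\,\Ch_{M'}(\lambda)$, i.e., to $\{0\}\cup\sigma(M')$. Choose the remaining entries of $B$ generically inside $\mathcal{Q}(\mathcal{P})$ so that $M'$ has distinct, nonzero eigenvalues; by continuity of roots in the coefficients, the $n-2$ bounded eigenvalues of $B$ then inherit the real-or-nonreal character of their simple limits for all sufficiently large $|c|$. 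If $r$ denotes the number of real elements of $\{0\}\cup\sigma(M')$, this yields $B_+\in\mathcal{Q}(\mathcal{P})$ (for $c\gg 0$) with $i_r(B_+)=r+2$ and $B_-\in\mathcal{Q}(\mathcal{P})$ (for $c\ll 0$) with $i_c(B_-)=n-r$. By Remark~\ref{xx5}, $n_r(\mathcal{P})+n_c(\mathcal{P})\ge(r+2)+(n-r)=n+2>n$, so $\mathcal{P}$ is not consistent.

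The main technical hurdle is the final continuity step: certifying that the $n-2$ bounded eigenvalues really do split into $r$ real and $n-2-r$ nonreal ones in both large-$|c|$ regimes. This relies on the simple-root genericity on $\lambda\,\Ch_{M'}(\lambda)$, after which a local implicit function or Rouch\'{e} argument at each limit root makes the inheritance of character precise. Realising this genericity amounts to arranging that the induced pattern $\mathcal{P}[\{4,\dots,n\}]$ admit at least one representative with distinct nonzero eigenvalues, an open-dense condition in $\mathcal{Q}(\mathcal{P}[\{4,\dots,n\}])$ that holds for essentially all patterns arising in our setting and can be handled separately in any degenerate subcase.
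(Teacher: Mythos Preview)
Your Schur-complement approach is correct up through the identity $\Ch_B(\lambda)=\lambda^2\Ch_M(\lambda)-c\lambda\Ch_{M'}(\lambda)$ and the observation that $c=\alpha\beta+\gamma\delta$ can be prescribed to be any real number; this is an elegant reduction that the paper does not use. The paper instead takes a composite cycle $\Gamma$ of maximum length $m$ in $D$, swaps the $2$-cycle through $w$ for $(u,w)$ or $(v,w)$ to obtain $\Gamma_1,\Gamma_2$ (both still of maximum length $m$), and compares $B_{\Gamma_1}(\epsilon)$ with $B_{\Gamma_2}(\epsilon)$. Since $E_k(B)=0$ for all $k>m$ and all $B\in\mathcal{Q}(\mathcal{P})$, every such $B$ has $0$ as an eigenvalue of exact multiplicity $n-m$, while the remaining $m$ eigenvalues of $B_{\Gamma_r}(\epsilon)$ are simple perturbations of the distinct nonzero eigenvalues of $B_{\Gamma_r}(0)$; this yields an exact real-eigenvalue discrepancy of $2$ with no genericity hypothesis needed.

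Your argument, by contrast, has a genuine gap at the genericity step. You need $M'$ to have distinct nonzero eigenvalues, but this cannot always be arranged: if $w$ has a third leaf neighbour $x\in\{4,\dots,n\}$, then $x$ is an isolated vertex in $\mathcal{P}[\{4,\dots,n\}]$ and \emph{every} $M'$ in that class has $0$ as an eigenvalue, so $\lambda\Ch_{M'}(\lambda)$ has a forced multiple root at $0$. More seriously, whenever $\Ch_{M'}$ has a real double root $\mu$ with $\mu\Ch_M(\mu)\neq0$, the two nearby bounded roots of $\Ch_B$ are approximately $\mu\pm\sqrt{\mu\Ch_M(\mu)/(ca)}$ (with $a=\tfrac12\Ch_{M'}''(\mu)$), and their real/nonreal character flips with the sign of $c$; this flip of $2$ can exactly offset the flip of $2$ coming from the unbounded roots $\pm\sqrt{c}$, potentially leaving $i_r(B_+)=i_r(B_-)$ and $n_r(\mathcal{P})+n_c(\mathcal{P})=n$. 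Saying the degenerate subcases ``can be handled separately'' is precisely where a proof is missing. The paper's composite-cycle construction sidesteps this entirely because the zero eigenvalue has constant, pattern-determined multiplicity $n-m$ and the nonzero limiting eigenvalues are simple by the construction~\eqref{xe3}.
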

\begin{proof} Assume that the edges $\{u,w\}$ and $\{v,w\}$ are oppositely signed, and without loss of generality let $\{u,w\}$ be positive and $\{v,w\}$ be negative. 
Let $D$ denote the signed directed graph corresponding to $\mathcal{P}$, and let $\Gamma$ be a composite cycle of maximum length in $D$.

    \textbf{Case~1:} $(z,w)$ is in $\Gamma$ for some $z\in V(D)$. \\ 
Consider composite cycles $\Gamma_1, \Gamma_2$ in $D$ obtained from $\Gamma$ by replacing $(z,w)$ with $(u,w), (v,w)$, respectively. 
Then both $\Gamma_1, \Gamma_2$ are composite cycles in $D$ of maximum length. 
Define the real matrices $B_{\Gamma_1}(0)$ and $B_{\Gamma_2}(0)$ as in~\eqref{xe3}. 
Then $B_{\Gamma_1}(0)$ has two additional real eigenvalues compared to $B_{\Gamma_2}(0)$.  

For $\epsilon>0$, define $B_{\Gamma_1}(\epsilon), B_{\Gamma_2}(\epsilon) \in \mathcal{Q}(\mathcal{P})$ as in~\eqref{e2}, with $\gamma$ replaced by $\Gamma_1$ and $\Gamma_2$, respectively. Then for $\epsilon>0$ sufficiently small, the eigenvalues of $B_{\Gamma_r}(\epsilon)$ remain close to the eigenvalues of $B_{\Gamma_r}(0)$, for $r=1,2$. 
Therefore, 
$
S_{B_{\Gamma_1}(\epsilon)} \neq S_{B_{\Gamma_2}(\epsilon)}, 
$ and $\mathcal{P}$ is not consistent.

     \textbf{Case~2:} $(z,w)\notin \Gamma$ for any $z\in V(D)$. 
    \\
Then there exists a simple cycle $\gamma=(w,u_1,u_2,...,u_k)$, such that $\gamma \in \Gamma$.

\begin{figure}[H]
\tikzset{node distance=1cm}
\centering
\begin{tikzpicture}
	\tikzstyle{vertex}=[draw, circle, inner sep=0pt, minimum size=.15cm, fill=black]
	\tikzstyle{edge}=[,thick]
	\node[vertex, label=left:$~w$](v1)at(0,0){};
    \node[vertex,label=right:$u$](v2)at(1,0.5){};
	\node[vertex,label=right:$v$](v3)at(1,-0.5){};
    \node[vertex](v8)at(0.35,1){};
	\node[vertex](v9)at(0.35,-1){};
\node[vertex, label=above:$u_1$](v4)at(-.75,1){};
\node[vertex, label=above:$u_2$](v5)at(-1.75,1){};
\node[vertex, label=below:$u_{k-1}$](v6)at(-1.75,-1){};
\node[vertex, label=below:$u_k$](v7)at(-.75,-1){};
    
	\draw[edge](v1)--node[above]{$ +$}(v2);
    \draw[edge](v1)--node[below]{$-$}(v3);
    \draw[edge](v1)--node[]{$ $}(v4);
    \draw[edge](v4)--node[]{$ $}(v5);
    \draw[edge](v1)--node[]{$ $}(v7);
    \draw[edge](v6)--node[]{$ $}(v7);
    \draw[thick, dotted](v5)--node[]{$ $}(v6);
    \draw[thick, dotted](v1)--node[]{$ $}(v8);
    \draw[thick, dotted](v1)--node[]{$ $}(v9);
	\end{tikzpicture}
\caption{$G$}	
\end{figure}
 If $k$ is even then the composite cycle $\Gamma'$ obtained from $\Gamma$ by replacing $\gamma$ with $(u_1,u_2)(u_3,u_4)\cdots (u_{k-1},u_k)\\(v,w)$ has length $l(\Gamma)+1$, which contradicts that $\Gamma$ has maximum length.
Hence $k$ must be odd. 

Consider the composite cycles $\Gamma_1,\Gamma_2$ in $D$ obtained from $\Gamma$ by replacing $\gamma$ with $(u_1,u_2)(u_3,u_4)\cdots (u_{k-2},\\u_{k-1})(u,w)$ and $(u_1,u_2)(u_3,u_4)\cdots (u_{k-2},u_{k-1})(v,w)$, respectively. Then both the composite cycles $\Gamma_1,\Gamma_2$ in $D$ have maximum length. Similarly as in case~1, there exists two real matrices $B_{\Gamma_1}(\epsilon),B_{\Gamma_2}(\epsilon) \in \mathcal{Q}(\mathcal{P})$ with $
S_{B_{\Gamma_1}(\epsilon)} \neq S_{B_{\Gamma_2}(\epsilon)}.
$
Therefore, $\mathcal{P}$ is not consistent.
\end{proof}
We illustrate the above result by the following example.
  \begin{eg} \rm
	Suppose that $\mathcal{P}$ is an irreducible, tridiagonal sign pattern with a $0$-diagonal, whose underlying signed undirected graph $G$ is given in Fig.\ref{x1.2}.
		$$\begin{minipage}{.5\textwidth}
	$\mathcal{P}=\begin{bmatrix}
		0&+&+&+&+\\ +&0&0&0&0\\ -&0&0&0&0\\ +&0&0&0&0\\+&0&0&0&0
	\end{bmatrix}$
\end{minipage}
\hspace{-1cm}
\begin{minipage}{.45\textwidth}
	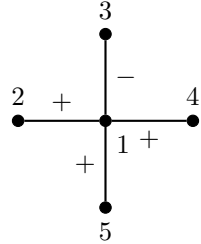
\begin{figure}[H] \label{x1.2}\label{fign1}
		\tikzset{node distance=1cm}
		\centering
		\begin{tikzpicture}
			\tikzstyle{vertex}=[draw, circle, inner sep=0pt, minimum size=.15cm, fill=black]
			\tikzstyle{edge}=[,thick]
			\node[vertex,label=below:~~~~1](v1)at(0,0){};
			\node[vertex,label=above:2](v2)at(-1.15,0){};
			\node[vertex,label=above:4](v3)at(1.15,0){};
			\node[vertex,label=above:3](v4)at(0,1.15){};
			\node[vertex,label=below:5](v5)at(0,-1.15){};
			
	\draw[edge](v1)--node[above]{$ +$}(v2);
    \draw[edge](v1)--node[below]{$ +$}(v3);
    \draw[edge](v1)--node[right]{$ -$}(v4);
    \draw[edge](v1)--node[left]{$ +$}(v5);

		\end{tikzpicture}\caption{The signed undirected graph $ G$ of $\mathcal{P}$.} \end{figure}
		\end{minipage}   $$   
	Then the edges $\{1,2\}$ and $\{1,3\}$ in $G$ are oppositely signed. Therefore, $\mathcal{P}$ satisfies the conditions of Theorem~\ref{thx1.1}, and hence $\mathcal{P}$ is not consistent.
\end{eg}

\section{Consistency of combinatorially symmetric sign patterns with cycles in their underlying signed undirected graph}\label{s4}

In this section, we consider irreducible, combinatorially symmetric sign patterns that necessarily contain cycles but no loops. We derive some interesting necessary and sufficient conditions for such sign patterns to be consistent based on the sign of the edges and other combinatorial properties of the cycles in the underlying graphs.
We begin our discussion with the following lemma.

\begin{lemma}\label{x1}
	Let $\mathcal{P}\in \mathcal{Q}_n$ be a sign pattern, whose underlying directed graph is $D$. Suppose that the maximum length of the composite cycles in $D$ is $m$, $m\geq 2$. If $D$ contains composite cycles $\Gamma_1,\Gamma_2$, consisting only of negative $2$-cycles, positive $2$-cycles, respectively, such that $l(\Gamma_1)+l(\Gamma_2)\geq m+2$, then $\mathcal{P}$ is not consistent.
\end{lemma}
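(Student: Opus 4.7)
The plan is to construct two matrices in $\mathcal{Q}(\mathcal{P})$, one derived from $\Gamma_1$ and one from $\Gamma_2$, whose real-eigenvalue counts cannot coincide, thereby violating consistency. Specifically, I take $B_1=B_{\Gamma_1}(\epsilon)$ and $B_2=B_{\Gamma_2}(\epsilon)$ for sufficiently small $\epsilon>0$, constructed as in \eqref{xe3}--\eqref{e2}.

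First, since the maximum composite cycle length in $D$ is $m$, the coefficients $E_k(B)$ of $\Ch_B(x)$ vanish for every $k>m$ and every $B\in\mathcal{Q}(\mathcal{P})$, so
$$
\Ch_B(x) \ = \ x^{n-m}\,g_B(x)
$$
with $\deg g_B = m$. In particular, every such $B$ has at least $n-m$ zero (hence real) eigenvalues. I next analyze $B_{\Gamma_1}(0)$: each $2$-cycle in $\Gamma_1$ is negative, i.e.\ the two off-diagonal entries of $\mathcal{P}$ have the same sign, so the corresponding $2\times 2$ principal submatrix of $B_{\Gamma_1}(0)$ has eigenvalues $\pm 10^p$, both real and algebraically simple. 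Hence $B_{\Gamma_1}(0)$ has exactly $l(\Gamma_1)$ simple nonzero real eigenvalues, all remaining eigenvalues being zero; equivalently, $g_{B_{\Gamma_1}(0)}(x)$ has $l(\Gamma_1)$ simple nonzero real roots and a zero of multiplicity $m-l(\Gamma_1)$. Since simple real roots of a polynomial are stable under small continuous perturbations of its coefficients, for $\epsilon>0$ small enough, $g_{B_{\Gamma_1}(\epsilon)}$ still has at least $l(\Gamma_1)$ nonzero real roots, giving
$$
i_r\bigl(B_{\Gamma_1}(\epsilon)\bigr) \ \geq \ (n-m)+l(\Gamma_1).
$$

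Dually, each $2$-cycle in $\Gamma_2$ is positive, so its two off-diagonal entries have opposite signs, and the corresponding $2\times 2$ block of $B_{\Gamma_2}(0)$ has simple purely imaginary eigenvalues $\pm 10^p i$. Simple nonreal eigenvalues occur in conjugate pairs and cannot leave the nonreal axis under small perturbation without a pair first colliding, which simplicity forbids; therefore $B_{\Gamma_2}(\epsilon)$ has at least $l(\Gamma_2)$ nonreal eigenvalues, and
$$
i_r\bigl(B_{\Gamma_2}(\epsilon)\bigr) \ \leq \ n-l(\Gamma_2).
$$
If $\mathcal{P}$ were $k$-consistent, we would need $(n-m)+l(\Gamma_1)\leq k \leq n-l(\Gamma_2)$, forcing $l(\Gamma_1)+l(\Gamma_2)\leq m$, which contradicts the hypothesis $l(\Gamma_1)+l(\Gamma_2)\geq m+2$.

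The main obstacle is that the zero eigenvalue of $B_{\Gamma_r}(0)$ has high algebraic multiplicity and can split into either real or nonreal eigenvalues in $B_{\Gamma_r}(\epsilon)$, so $i_r\bigl(B_{\Gamma_r}(\epsilon)\bigr)$ cannot be read off directly. The resolution is to combine the $n-m$ zero eigenvalues guaranteed by the factorization $\Ch_B(x)=x^{n-m}\,g_B(x)$ with only the \emph{simple} nonzero eigenvalues produced by the $2$-cycles in $\Gamma_1$ or $\Gamma_2$, ignoring the $m-l(\Gamma_r)$ degenerate zeros of $g_{B_{\Gamma_r}(0)}$ whose fate under perturbation is uncontrolled. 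The margin $l(\Gamma_1)+l(\Gamma_2)\geq m+2$ is exactly what is required to make these two bounds incompatible.
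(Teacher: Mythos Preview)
Your proof is correct and follows essentially the same approach as the paper's: construct $B_{\Gamma_1}(\epsilon)$ and $B_{\Gamma_2}(\epsilon)$, use simplicity of the nonzero eigenvalues of $B_{\Gamma_r}(0)$ to control their fate under perturbation, and combine with the $n-m$ guaranteed zero eigenvalues to obtain incompatible bounds on $i_r$. Your explicit use of the factorization $\Ch_B(x)=x^{n-m}g_B(x)$ to justify the $(n-m)$ term is a helpful clarification that the paper leaves implicit.
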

\begin{proof}
	Let $\Gamma_1=\alpha_{i_1}\alpha_{i_2}\cdots \alpha_{i_{k_1}}$, $\Gamma_2=\alpha_{j_1}\alpha_{j_2}\cdots \alpha_{j_{k_2}}$ be two composite cycles in $D$ consisting only of negative $2$-cycles, positive $2$-cycles, respectively, such that $l(\Gamma_1)+l(\Gamma_2)=2k_1+2k_2\geq m+2$.
	
	 Define the real matrix $B_{\Gamma_1}(0)$ as in $\eqref{xe3}$, with $\Gamma$ replaced by $\Gamma_1$.
	Then the eigenvalues of $B_{\Gamma_1}(0)$ are the second complex roots of $10^{2p}$ for $p=1,2,...,{k_1}$. Therefore, 
	$B_{\Gamma_1}(0)$ has $2k_1$ algebraically simple nonzero real eigenvalues. For $\epsilon > 0$ sufficiently small, define $B_{\Gamma_1}(\epsilon)$ as in \eqref{e2} with $\gamma$ replaced by $\Gamma_1$. Then $B_{\Gamma_1}(\epsilon)\in \mathcal{Q}(\mathcal{P})$ and its eigenvalues remain close to the eigenvalues of $B_{\Gamma_1}(0)$. The real eigenvalues of $B_{\Gamma_1}(0)$ of the form $\lambda,-\lambda$ cannot merge to form nonreal conjugate eigenvalue pairs in $B_{\Gamma_1}(\epsilon)$. Hence, 
 $$i_r(B_{\Gamma_1}(\epsilon))\geq (n-m)+2k_1.$$
	
	Similarly, define the real matrix $B_{\Gamma_2}(0)$ as in \eqref{xe3}, with $\Gamma$ replaced by $\Gamma_2$, then it has $2k_2$ algebraically simple, nonzero purely imaginary eigenvalues. Therefore, for $\epsilon>0$ sufficiently small, define $B_{\Gamma_2}(\epsilon)$ as in \eqref{e2} with $\gamma$ replaced by $\Gamma_2$. Then $B_{\Gamma_2}(\epsilon)\in \mathcal{Q}(\mathcal{P})$ and its eigenvalues remain close to the eigenvalues of $B_{\Gamma_2}(0)$. So,
$$i_r(B_{\Gamma_2}(\epsilon))\leq n-2k_2\leq (n-m)+2k_1-2,$$ 
since $2k_1+2k_2\geq m+2$.
    Therefore, $S_{B_{\Gamma_1}(\epsilon)} \neq S_{B_{\Gamma_2}(\epsilon)}$, and $\mathcal{P}$ is not consistent.
\end{proof}

\begin{theorem}\label{x2}
	Let $\mathcal{P} \in \mathcal{Q}_n$ be an irreducible, combinatorially symmetric sign pattern with a $0$-diagonal, whose underlying signed undirected graph $G$ is a cycle. Then $\mathcal{P}$ is not consistent if any one of the following holds.
	\begin{itemize}
		 
		\item[i.] $G$ has two nonadjacent positively signed edges.
		\item[ii.] $G$ has more than one maximal signed path of odd length.
        \item[iii.] $n$ is odd and $G$ has a positively signed edge.
	\end{itemize}
\end{theorem}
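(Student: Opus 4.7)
The overall plan is to handle each of the conditions (i), (ii), (iii) separately and, in each case, produce two matrices in $\mathcal{Q}(\mathcal{P})$ with different eigenvalue frequencies. Since $G$ is a cycle on $n$ vertices, the signed directed graph $D$ of $\mathcal{P}$ contains exactly $n$ two-cycles (one per edge of $G$) together with the two $n$-cycles $\gamma^+,\gamma^-$ traversing $G$ in opposite directions, so the maximum composite cycle length is $m=n$. Recall the paper's sign convention (from $\sgn(\gamma)=(-1)^{k-1}p_{i_1i_2}\cdots p_{i_ki_1}$): a positive undirected edge of $G$ corresponds to a \emph{negative} $2$-cycle in $D$ (whose $2\times 2$ block in $B_\Gamma(0)$ has real eigenvalues), and a negative undirected edge corresponds to a positive $2$-cycle (purely imaginary eigenvalues).

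For (i), I would take $\Gamma_1=(e_1)(e_2)$, the composite of the two vertex-disjoint negative $2$-cycles in $D$ coming from the nonadjacent positive undirected edges $e_1,e_2$. The matrix $B_{\Gamma_1}(0)$ defined by \eqref{xe3} then has four algebraically simple nonzero real eigenvalues $\pm 10,\pm 100$, together with a zero of multiplicity $n-4$. For $\epsilon>0$ sufficiently small, $B_{\Gamma_1}(\epsilon)\in\mathcal{Q}(\mathcal{P})$ and its four simple nonzero eigenvalues remain real, so $i_r(B_{\Gamma_1}(\epsilon))\geq 4$. On the other hand, $B_{\gamma^+}(\epsilon)$ built from an $n$-cycle $\gamma^+$ satisfies $i_r(B_{\gamma^+}(\epsilon))\leq 2$, since its eigenvalues are close to the $n$ simple $n$-th roots of $\pm 10^n$, of which at most two can be real. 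Thus $i_r$ differs between these two matrices, giving inconsistency. Part (iii) is handled by the same template with $\Gamma_1=(e)$ for a single positive edge $e$: $B_{\Gamma_1}(\epsilon)$ has two simple real eigenvalues near $\pm 10$, and the parity constraint $i_r(A)\equiv n\pmod 2$ (valid for any real $n\times n$ matrix) together with the oddness of $n$ forces $i_r(B_{\Gamma_1}(\epsilon))\geq 3$, while for $n$ odd the $n$-th roots of $\pm 10^n$ contain exactly one real value, so $i_r(B_{\gamma^+}(\epsilon))=1$.

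For (ii), the plan is to apply Lemma~\ref{x1} directly after translating its hypothesis into the signed-path language. Let $\Gamma_1$ be the composite cycle in $D$ built from a maximum matching of the subgraph of positive edges of $G$ (so all its $2$-cycles are negative in $D$), and let $\Gamma_2$ be the analogous composite from a maximum matching of the subgraph of negative edges. Each maximal signed path of length $\ell$ is a path graph on $\ell+1$ vertices whose maximum matching has $\lceil \ell/2\rceil$ edges; summing over all positive and negative blocks and using $\sum\ell=n$ yields
\begin{equation*}
l(\Gamma_1)+l(\Gamma_2)=n+(\#\text{ odd-length maximal signed paths}).
\end{equation*}
Under the hypothesis of (ii) this is at least $n+2=m+2$, so Lemma~\ref{x1} applies. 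The main obstacle I anticipate is the bookkeeping in (ii)---verifying that $\Gamma_1$ and $\Gamma_2$ are legitimate composite cycles in $D$; this is fine because composite cycles need only be internally vertex-disjoint, and the two matchings are independently chosen in edge-disjoint subgraphs of $G$ that share only block-endpoint vertices.
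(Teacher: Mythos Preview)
Your proposal is correct. Parts (i) and (iii) are essentially the paper's argument: for (i) the paper too compares a composite of two negative $2$-cycles (four real eigenvalues near $\pm 10,\pm 100$) against the full $n$-cycle (at most two real eigenvalues); for (iii) it compares a single negative $2$-cycle against the $n$-cycle. Your parity observation in (iii) is a pleasant extra but not needed, since $i_r\geq 2$ already beats $i_r=1$.

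For (ii) you take a genuinely cleaner route than the paper. The paper splits into the cases $n$ odd and $n$ even, fixes one or two specific odd-length maximal signed paths, and builds by hand an edge set $\mathcal{S}$ that covers each vertex at least once and some boundary vertices twice; it then separates $\mathcal{S}$ into its positive and negative edges and checks that each colour class is a matching, arriving at $2k_1+2k_2=n+3$ or $n+2$ before invoking Lemma~\ref{x1}. Your approach bypasses this construction: you take maximum matchings in the positive-edge and negative-edge subgraphs separately and use the path-matching identity $\sum_i \lceil \ell_i/2\rceil = \tfrac{1}{2}\bigl(n + \#\{\text{odd } \ell_i\}\bigr)$ to obtain $l(\Gamma_1)+l(\Gamma_2)=n+\#\{\text{odd maximal signed paths}\}\geq n+2$ in one stroke. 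This avoids the case split and gives the sharp count directly; the paper's construction, on the other hand, makes the two witnessing matchings completely explicit. Both feed into Lemma~\ref{x1} in the same way. Your final remark that $\Gamma_1$ and $\Gamma_2$ need not be mutually disjoint (only internally vertex-disjoint) is exactly the right observation, and the hypothesis of (ii) guarantees both edge colours are present so neither matching is empty.
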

\begin{proof}
	Let $D$ be the underlying signed directed graph of $\mathcal{P}$.
	\begin{itemize}


		\item[i.] Suppose that $\{p,p+1\}$ and $\{q,q+1\}$ are two nonadjacent positively signed edges of $G$. 
Consider the composite cycle $\Gamma_1=(p,p+1)(q,q+1)$ in $D$. 
Define the real matrix $B_{\Gamma_1}(0)=[b_{\Gamma_1}(0)_{ij}]$ as in~\eqref{xe3}, with $\Gamma$ replaced by $\Gamma_1$. 
Then the nonzero eigenvalues of $B_{\Gamma_1}(0)$ are $\pm 10$ and $\pm 10^2$. 
Hence, for $\epsilon>0$ sufficiently small, the perturbed matrix $B_{\Gamma_1}(\epsilon)$ has four nonzero real eigenvalues close to those of $B_{\Gamma_1}(0)$.

Now consider the simple cycle $\Gamma_2 = (1,2,\ldots,n)$ in $D$, and define the matrices $B_{\Gamma_2}(0)$ and $B_{\Gamma_2}(\epsilon)$ as in~\eqref{e1} and~\eqref{e2}, with $\gamma$ replaced by $\Gamma_2$.
Since $B_{\Gamma_2}(0)$ has $n$ nonzero algebraically simple eigenvalues, namely the $n$-th complex roots of unity (for $n\geq 3$), it follows that for $\epsilon>0$ sufficiently small, the perturbed matrix $B_{\Gamma_2}(\epsilon)\in\mathcal{Q}(\mathcal{P})$ and has $n$ nonzero algebraically simple eigenvalues close to those of $B_{\Gamma_2}(0)$. Hence $B_{\Gamma_2}(\epsilon)$ has at most $2$ of which is real. 

Therefore,
$
S_{B_{\Gamma_1}(\epsilon)}\neq S_{B_{\Gamma_2}(\epsilon)}
$, and hence $\mathcal{P}$ is not consistent.

		\item[ii.] Assume $G$ has at least two maximal signed paths of odd length.
		
		\textbf{Case~1:} $n$ is odd. \\
		Then there exist at least three maximal signed paths of odd length. Let $P_1=\{1,2\}\{2,3\} \cdots \{2p-1,2p\}$ and $P_2=\{s,s+1\}\{s+1,s+2\}\cdots \{s+2r,s+2r+1\}$ be two maximal signed paths of odd length, where $s> 2p$. Since $n$ is odd, either $\dist(2p,s)$ is odd or $\dist(s+2r+1,1)$ is odd. Without loss of generality, suppose that $\dist(2p,s)$ is odd, then $\dist(s,1)$ is also odd.
		Consider,
		$$
		\begin{aligned}
			\mathcal{S}={} & \bigl\{\{2i-1,2i\} : 1\le i\le p\bigr\}
			\;\cup\;
			\bigl\{\{2i,2i+1\} :2p \le 2i \le (s-1)\bigr\} \\[6pt]
			& \;\cup\;
			\bigl\{\{2i-1,2i\} : (s+1) \le 2i \le (n-1)\bigr\}
			\;\cup\;
			\bigl\{\{n,1\}\bigr\}.
		\end{aligned}
		$$
		
		\begin{figure}[H]
			\tikzset{node distance=1cm}
			\centering
			\begin{tikzpicture}
				\tikzstyle{vertex}=[draw, circle, inner sep=0pt, minimum size=.15cm, fill=black]
				\tikzstyle{edge}=[,thick]
				\node[vertex,label=right:1](v1)at(2,0){};
				\node[vertex,label=above:9](v2)at(1.53,1.28){};
				\node[vertex,label=above:8](v3)at(0.35,1.97){};
				\node[vertex,label=above:7](v4)at(-1.18,1.62){};
					\node[vertex,label=left:6](v5)at(-1.90,0.62){};
				\node[vertex,label=left:5](v6)at(-1.90,-0.62){};
					\node[vertex,label=below:4](v7)at(-1.18,-1.62){};
				\node[vertex,label=below:3](v8)at(0.35,-1.97){};
				\node[vertex,label=below:2](v9)at(1.53,-1.28){};
				\draw[edge, ultra thick, dotted](v1)--node[right, yshift=0.2cm, xshift=-0.1cm]{$+$}(v2);
				\draw[edge](v2)--node[above,  xshift=0.1cm]{$+$}(v3);
				\draw[edge, ultra thick, dotted](v3)--node[above, yshift=0.0cm, xshift=-0.1cm]{$-$}(v4);
				\draw[edge, ultra thick, dotted](v4)--node[left, yshift=0.1cm, xshift=0cm]{$+$}(v5);
				\draw[edge](v5)--node[left, yshift=0cm, xshift=-0.05cm]{$+$}(v6);
				\draw[edge, ultra thick, dotted](v6)--node[left, yshift=-0.1cm]{$+$}(v7);
				\draw[edge, ultra thick, dotted](v7)--node[below]{$-$}(v8);
				\draw[edge](v8)--node[below]{$-$}(v9);
				\draw[edge, ultra thick, dotted](v9)--node[right, yshift=-0.05cm, xshift=0cm]{$-$}(v1);
			\end{tikzpicture}\caption{ In the above figure $G$ is a cycle of length $9$, $P_1=\{1,2\}\{2,3\}\{3,4\}$ and $P_2=\{7,8\}$ where the dotted edges correspond to $\mathcal{S}$.} \end{figure}
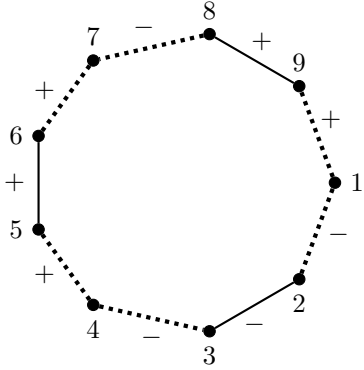
		Then the edges $\{2p-1,2p\}$, $\{2p,2p+1\}$; $\{s-1,s\}$, $\{s,s+1\}$; $\{n,1\}$, $\{1,2\}$ are the only pair of edges in $\mathcal{S}$ which are adjacent, and the edges in each pair are signed oppositely. Hence, the positive edges in $\mathcal{S}$, the negative edges in $\mathcal{S}$ are all nonadjacent. Let $k_1$ (respectively, $k_2$) be the number of positive (respectively, negative) edges in $\mathcal{S}$. Since every vertex of $G$ occurs once in $\mathcal{S}$ except $2p$, $s$ and $1$, which occurs twice, hence $2k_1+2k_2=n+3$. Let $\mathcal{M}_1$ 
        and $\mathcal{M}_2$ 
        be two matchings in $G$ containing only positive and negative edges in $\mathcal{S}$, respectively. 
		
		 Let $\Gamma_1$, $\Gamma_2$ be the composite cycles in $D$ corresponding to $\mathcal{M}_1$, $\mathcal{M}_2$, respectively. Then, $\Gamma_1$ (respectively, $\Gamma_2$) consists only of negative (respectively, positive) $2$-cycles in $D$, and since $l(\Gamma_1) + l(\Gamma_2) = n + 3$, it follows from Lemma~\ref{x1} that $\mathcal{P}$ is not consistent.

		\textbf{Case~2:} $n$ is even.\\
          Suppose $G$ contains more than one maximal signed path of odd length. Let $P_1=\{1,2\}\{2,3\}\cdots \{2p-1,2p\}$ be a maximal signed path of odd length. Consider,
			$$\mathcal{S}=\{\{2j-1,2j\}: 1\leq j \leq p\}~\cup~\{\{2i,2i+1\}: 2p\leq 2i\leq (n-2)\}~\cup~\{\{n,1\}\}. $$
            Then the edges $\{2p-1,2p\}$, $\{2p,2p+1\}$; $\{n,1\}$, $\{1,2\}$ are the only pair of edges in $\mathcal{S}$ which are adjacent, and the edges in each pair are signed oppositely. Then similarly as in case~1, $\mathcal{S}$ contains two matchings, denoted by $\mathcal{M}_1$ and $\mathcal{M}_2$, where $\mathcal{M}_1$ (respectively, $\mathcal{M}_2$) consists only of positively (respectively, negatively) signed edges from $\mathcal{S}$ and has length $k_1$ (respectively, $k_2$), with $2k_1 + 2k_2 = n + 2$. 
			
			 Let $\Gamma_1$, $\Gamma_2$ be the composite cycles in $D$ corresponding to $\mathcal{M}_1$, $\mathcal{M}_2$, respectively. Then, $\Gamma_1$ (respectively, $\Gamma_2$) consists only of negative (respectively, positive) $2$-cycles in $D$, and since $l(\Gamma_1) + l(\Gamma_2) = n + 2$, it follows from Lemma~\ref{x1} that $\mathcal{P}$ is not consistent.
             
             \item[iii.] Suppose that $\{p,p+1\}$ is a positively signed edge in $G$. 
Consider the cycle $\Gamma_1 = (p,p+1)$ in $D$, and define the matrices $B_{\Gamma_1}(0)$ and $B_{\Gamma_1}(\epsilon)$ as in~\eqref{e1} and~\eqref{e2}, with $\gamma$ replaced by $\Gamma_1$. 
For $\epsilon>0$ sufficiently small, the perturbed matrix $B_{\Gamma_1}(\epsilon)$ has two nonzero real eigenvalues close to $\pm 1$. 
Hence $i_r(B_{\Gamma_1}(\epsilon)) \geq 2$.

Now consider the simple cycle $\Gamma_2 = (1,2,\ldots,n)$ in $D$, and define the matrices $B_{\Gamma_2}(0),B_{\Gamma_2}(\epsilon)$ as in~\eqref{e1},~\eqref{e2}, with $\gamma$ replaced by $\Gamma_2$. 
Then for $\epsilon>0$ sufficiently small, the eigenvalues of $B_{\Gamma_2}(\epsilon)$ are algebraically simple and lie close to the eigenvalues of $B_{\Gamma_2}(0)$. Since $n$ is odd, $B_{\Gamma_2}(\epsilon)$ has exactly one real eigenvalue. Thus $i_r(B_{\Gamma_2}(\epsilon))=1$. Therefore, $\mathcal{P}$ is not consistent.
	\end{itemize}
\end{proof}	

The following are examples of sign patterns that satisfy conditions (i), (ii), and (iii), respectively, of Theorem~\ref{x2} and therefore do not require a unique inertia.

\begin{eg}\rm
	 Suppose that $\mathcal{P}\in \mathcal{Q}_4$ is an irreducible, combinatorially symmetric sign pattern with a $0$-diagonal, whose underlying signed undirected graph is the graph $G$ given in Fig~\ref{x3}.
	$$ \begin{minipage}{.5\textwidth}
		$\mathcal{P}=\begin{bmatrix}
			0&+&0&+\\+&0&+&0\\0&+&0&+\\+&0&+&0
		\end{bmatrix}$
	\end{minipage}
	\hspace{-3cm}
	\begin{minipage}{.45\textwidth}
		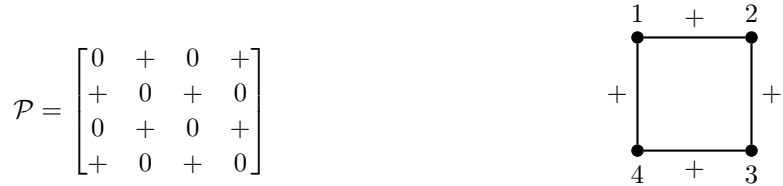
\begin{figure}[H] \label{x3}
			\tikzset{node distance=1cm}
			\centering
			\begin{tikzpicture}
				\tikzstyle{vertex}=[draw, circle, inner sep=0pt, minimum size=.15cm, fill=black]
				\tikzstyle{edge}=[,thick]
				\node[vertex,label=above:1](v1)at(-.75,.75){};
				\node[vertex,label=above:2](v2)at(.75,.75){};
				\node[vertex,label=below:3](v3)at(.75,-.75){};
				\node[vertex,label=below:4](v4)at(-.75,-.75){};

				\draw[edge](v1)--node[above]{$+ $}(v2);
				\draw[edge](v2)--node[right]{$+ $}(v3);
				\draw[edge](v3)--node[below]{$+ $}(v4);
				\draw[edge](v1)--node[left]{$+ $}(v4);

			\end{tikzpicture}\caption{The signed undirected graph $ G$ of $\mathcal{P}$.}\end{figure}
            
	\end{minipage}   $$ 
	Then $G$ is a cycle with two nonadjacent positively signed edges $\{1,2\},\{3,4\}$.  Thus, $\mathcal{P}$ satisfies the conditions stated in (i) of Theorem~\ref{x2}. Consequently, $\mathcal{P}$ is not consistent. 
\end{eg}

\begin{eg}\rm 
	 Suppose that $\mathcal{P}\in \mathcal{Q}_4$ is an irreducible, combinatorially symmetric sign pattern with a $0$-diagonal, whose underlying signed undirected graph is the graph $G$ given in Fig~\ref{x4}.
	$$ \begin{minipage}{.5\textwidth}
		$\mathcal{P}=\begin{bmatrix}
			0&-&0&+\\+&0&-&0\\0&+&0&-\\+&0&+&0
		\end{bmatrix}$
	\end{minipage}
	\hspace{-3cm}
	\begin{minipage}{.45\textwidth}
		\begin{figure}[H] \label{x4}
			\tikzset{node distance=1cm}
			\centering
			\begin{tikzpicture}
				\tikzstyle{vertex}=[draw, circle, inner sep=0pt, minimum size=.15cm, fill=black]
				\tikzstyle{edge}=[,thick]
				\node[vertex,label=above:1](v1)at(-.75,.75){};
				\node[vertex,label=above:2](v2)at(.75,.75){};
				\node[vertex,label=below:3](v3)at(.75,-.75){};
				\node[vertex,label=below:4](v4)at(-.75,-.75){};

				\draw[edge](v1)--node[above]{$- $}(v2);
				\draw[edge](v2)--node[right]{$- $}(v3);
				\draw[edge](v3)--node[below]{$- $}(v4);
				\draw[edge](v1)--node[left]{$+ $}(v4);

			\end{tikzpicture}\caption{The signed undirected graph $ G$ of $\mathcal{P}$.} \end{figure}
	\end{minipage}   $$ 
	Then $G$ is a cycle with more than one maximal signed path of odd length.  Thus, $\mathcal{P}$ satisfies the conditions stated in (ii) of Theorem~\ref{x2}. Consequently, $\mathcal{P}$ is not consistent.
\end{eg}
\begin{eg}\rm 
    Suppose that $\mathcal{P}\in \mathcal{Q}_3$ is an irreducible, combinatorially symmetric sign pattern with a $0$-diagonal, whose underlying signed undirected graph $G$ is given in Fig.\ref{fign1.1}.
	$$ \begin{minipage}{.5\textwidth}
		$\mathcal{P}=
\begin{bmatrix}
	0 & + & + \\
	+ & 0 & + \\
	+ & + & 0
\end{bmatrix}$
	\end{minipage}
	\hspace{-3cm}
	\begin{minipage}{.45\textwidth}
		\begin{figure}[H]
	\centering
	\tikzset{node distance=1cm}
	\begin{tikzpicture}
		\tikzstyle{vertex}=[draw, circle, inner sep=0pt, minimum size=.15cm, fill=black]
		\tikzstyle{edge}=[thick]
		\node[vertex,label=left:3](v1) at(-1,0) {};
		\node[vertex,label=right:2](v2) at(1,0) {};
		\node[vertex,label=above:1](v3) at(0,1) {};
		\draw[edge](v1)--node[below]{$+$}(v2);
		\draw[edge](v2)--node[right]{$+$}(v3);
		\draw[edge](v3)--node[left]{$+$}(v1);
	\end{tikzpicture}
	\caption{The signed undirected graph $ G$ of $\mathcal{P}$.}
	\label{fign1.1}
\end{figure}
	\end{minipage}   $$ 
    Then $G$ is a cycle with a positively signed edge.  Thus, $\mathcal{P}$ satisfies the conditions stated in (iii) of Theorem~\ref{x2}. Consequently, $\mathcal{P}$ is not consistent.
\end{eg}


			

The next example shows that the conditions given in Theorem~\ref{x2} are not sufficient to guarantee the consistency of a sign pattern.
\begin{eg}\label{nex1}\rm 
	 Suppose that $\mathcal{P}\in \mathcal{Q}_4$ is an irreducible, combinatorially symmetric sign pattern with a $0$-diagonal, whose underlying signed undirected graph $G$ is given in Fig.\ref{fign1.12}.
	$$ \begin{minipage}{.5\textwidth}
		$\mathcal{P}=
\begin{bmatrix}
	0 & - & 0 & -\\
	+ & 0 & - & 0 \\
	0 & + & 0 & +\\
    + & 0 & - & 0
\end{bmatrix}$
	\end{minipage}
	\hspace{-3cm}
	\begin{minipage}{.45\textwidth}
		\begin{figure}[H] 
			\tikzset{node distance=1cm}
			\centering
			\begin{tikzpicture}
				\tikzstyle{vertex}=[draw, circle, inner sep=0pt, minimum size=.15cm, fill=black]
				\tikzstyle{edge}=[,thick]
				\node[vertex,label=above:1](v1)at(-.75,.75){};
				\node[vertex,label=above:2](v2)at(.75,.75){};
				\node[vertex,label=below:3](v3)at(.75,-.75){};
				\node[vertex,label=below:4](v4)at(-.75,-.75){};

				\draw[edge](v1)--node[above]{$- $}(v2);
				\draw[edge](v2)--node[right]{$- $}(v3);
				\draw[edge](v3)--node[below]{$- $}(v4);
				\draw[edge](v1)--node[left]{$- $}(v4);

			\end{tikzpicture}\caption{The signed undirected graph $ G$ of $\mathcal{P}$.} \label{fign1.12} \end{figure}
	\end{minipage}   $$    
	Then $G$ is a cycle with exactly one maximal signed path of odd length, and no positively signed edge. Therefore, $\mathcal{P}$ does not satisfy any of the conditions of Theorem~\ref{x2}.
   Take
$$B_1=\begin{bmatrix}
			0&-10&0&-1\\1&0&-10&0\\0&1&0&10\\10&0&-1&0
		\end{bmatrix},~ B_2=\begin{bmatrix}
			0&-10&0&-1\\10&0&-1&0\\0&1&0&10\\1&0&-10&0
		\end{bmatrix}\in \mathcal{Q}(\mathcal{P}).$$
        Then $S_{B_1}=(2,2)$ and $S_{B_2}=(0,4)$. Therefore, $\mathcal{P}$ is not consistent.
\end{eg}

Note that Theorem~\ref{x2} provides necessary conditions for the consistency of an irreducible, combinatorially symmetric sign pattern with a $0$-diagonal, whose undirected graph is a cycle. Moreover, Condition (iii) tells us that $\mathcal{P}$ is consistent and of odd order, then the underlying undirected graph $G$ of $\mathcal{P}$ can not have any positive edge. In continuation, the following result shows that $G$ of odd order is consistent if and only if $G$ does not have any positive edge.

\begin{theorem}
Let $\mathcal{P} \in \mathcal{Q}_n$ be an irreducible, combinatorially symmetric sign pattern of odd order $n$ with $0$-diagonal, whose underlying signed undirected graphs $G$ is a cycle. 
If all edges of $G$ are negatively signed, then $\mathcal{P}$ is consistent.
\end{theorem}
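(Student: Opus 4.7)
The plan is to show that every $B \in \mathcal{Q}(\mathcal{P})$ has exactly one real eigenvalue; this immediately yields that $\mathcal{P}$ is $1$-consistent. I would accomplish this by proving that the characteristic polynomial $\Ch_B(x)$ is strictly increasing on $\mathbb{R}$.

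First I would analyze the cycle structure of the signed directed graph $D$ of $\mathcal{P}$. Since $G = C_n$ has no loops and every edge of $G$ is negative, the only simple cycles in $D$ are the $n$ two-cycles corresponding to the edges of $G$ (each of which is positive, because $\sgn((i,j)) = -p_{ij}p_{ji} = +$) together with the two $n$-cycles $(1,2,\dots,n)$ and $(1,n,n-1,\dots,2)$. In particular, $D$ contains no simple cycle of odd length strictly between $1$ and $n$.

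Next I would unpack the expansion
$$
\Ch_B(x) = x^n - E_1(B)x^{n-1} + E_2(B)x^{n-2} - \cdots + (-1)^n E_n(B).
$$
For every odd $k$ with $1 \leq k \leq n-2$, any composite cycle in $D$ of length $k$ would have to be a disjoint union of $2$-cycles, whose total length is even; hence $E_k(B) = 0$. For $k = 2m$ with $1 \leq m \leq (n-1)/2$, $E_{2m}(B)$ is a sum, over matchings of size $m$ in $G$, of signed products of $m$ vertex-disjoint positive $2$-cycles of $D$. Each such signed product is a product of positive real numbers, so $E_{2m}(B) > 0$ (the graph $C_n$ admits matchings of every size $m \leq (n-1)/2$). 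Using that $n$ is odd, so $(-1)^{n-1}=1$, this yields
$$
\Ch_B(x) = x^n + E_2(B)x^{n-2} + E_4(B)x^{n-4} + \cdots + E_{n-1}(B)\,x - E_n(B),
$$
with $E_{2m}(B) > 0$ for every admissible $m$ and $-E_n(B) \in \mathbb{R}$ of arbitrary sign.

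To finish, since $n$ is odd, every exponent appearing in
$$
\Ch_B'(x) = n x^{n-1} + (n-2) E_2(B) x^{n-3} + \cdots + E_{n-1}(B)
$$
is even, and every coefficient is strictly positive. Therefore $\Ch_B'(x) > 0$ for all $x \in \mathbb{R}$, so $\Ch_B$ is strictly increasing on $\mathbb{R}$ and has exactly one real root. Hence $i_r(B) = 1$ for every $B \in \mathcal{Q}(\mathcal{P})$, which shows that $\mathcal{P}$ is $1$-consistent, with $S_\mathcal{P} = (1, n-1)$. I expect the main obstacle to be the sign bookkeeping in the previous step: one must correctly combine the sign of the permutation from the Leibniz expansion of $\det(xI - B)$, the factors of $-1$ coming from $(xI - B)_{ij} = -b_{ij}$, and the fact that $b_{ij} b_{ji} < 0$ on every edge of $G$, in order to confirm that each $E_{2m}(B)$ is strictly positive. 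Once this sign computation is pinned down, the monotonicity argument delivers the result at once.
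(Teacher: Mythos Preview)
Your argument is correct and shares the same core analysis as the paper: both identify that the only simple cycles in $D$ have length $2$ or $n$, that each $2$-cycle is positive, and hence that $\Ch_B(x)=x^n+E_2(B)x^{n-2}+\cdots+E_{n-1}(B)x-E_n(B)$ with every $E_{2m}(B)>0$. The difference lies only in the final step. The paper splits into three cases according to the sign of $E_n(B)$ and applies Descartes' rule of signs to count positive and negative real roots separately. Your route via $\Ch_B'(x)>0$ is more economical: it handles all three cases at once and avoids invoking Descartes' rule, at the cost of requiring one to notice that the derivative has only even exponents. Both arguments are elementary and of comparable length; yours is slightly cleaner, while the paper's fits the Descartes-rule framework used elsewhere in the article.
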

\begin{proof} Suppose that $G$ is a cycle of odd length in which all edges are negatively signed. Then for any $B\in \mathcal{Q}(\mathcal{P})$, $E_k(B) > 0$ if $k$ is even, $E_k(B) = 0$ if $k$ is odd and $k<n$, where $E_k(B)$ for $1\leq k\leq n$ is the sum of all cycles (simple or composite) of length $k$ in $B$, properly signed.
Therefore, the characteristic polynomial of $B$ is 
$$\Ch_B(x)=x^n + E_2(B)x^{n-2} + \cdots + E_{n-1}(B)x - E_n(B).$$ 

\begin{itemize}
\item[i.] If $E_n(B)>0$, then
$$
\sgn(\Ch_B(x)) = 
\begin{cases} 
(+)+(+)(+)+\cdots+(+)(+)-(+) & \text{if $x>0$} \\
(-)+(+)(-)+\cdots+(+)(-)-(+) & \text{if $x<0$}.
\end{cases}
$$
Hence $V_+(x)=1$ and $V_-(x)=0$, so by Descartes' rule of signs, $\Ch_B(x)$ has exactly one positive root and no negative or zero roots.

\item[ii.] If $E_n(B)<0$, then
$$
\sgn(\Ch_B(x)) = 
\begin{cases} 
(+)+(+)(+)+\cdots+(+)(+)-(-) & \text{if $x>0$} \\
(-)+(+)(-)+\cdots+(+)(-)-(-) & \text{if $x<0$}.
\end{cases}
$$
Hence $V_+(x)=0$ and $V_-(x)=1$, so by Descartes' rule of signs, $\Ch_B(x)$ has exactly one negative root and no positive or zero roots.

\item[iii.] If $E_n(B)=0$, then
$\Ch_B(x) \;=\; x \big(x^{\,n-1}+E_2(B)x^{\,n-3}+\cdots+E_{n-1}(B)\big)$. Clearly $0$ is the only real root of $\Ch_B(x)$.
\end{itemize}

Therefore, any $B \in \mathcal{Q}(\mathcal{P})$ has exactly one real eigenvalue. Consequently, $S_\mathcal{P} = (1, n-1)$.
\end{proof}

The following examples show that if the order of $G$ is even and all the edges in $G$ are negative, then we cannot conclude either way, that is, in that case, the associated sign pattern may or may not be consistent.

\begin{eg}\rm
Suppose that $\mathcal{P}\in \mathcal{Q}_4$ is an irreducible combinatorially symmetric sign pattern with a $0$-diagonal, whose underlying signed undirected graph $G$ is given in Fig.\ref{fign1}.
$$ \begin{minipage}{.5\textwidth}
$\mathcal{P}=\begin{bmatrix}
	0&-&0&-\\+&0&-&0\\0&+&0&-\\+&0&+&0
\end{bmatrix}$
\end{minipage}
\hspace{-3cm}
\begin{minipage}{.45\textwidth}
\begin{figure}[H] \label{fign1}
	\tikzset{node distance=1cm}
	\centering
	\begin{tikzpicture}
		\tikzstyle{vertex}=[draw, circle, inner sep=0pt, minimum size=.15cm, fill=black]
		\tikzstyle{edge}=[,thick]
		\node[vertex,label=above:1](v1)at(-.75,.75){};
		\node[vertex,label=above:2](v2)at(.75,.75){};
		\node[vertex,label=below:3](v3)at(.75,-.75){};
		\node[vertex,label=below:4](v4)at(-.75,-.75){};

		\draw[edge](v1)--node[above]{$- $}(v2);
		\draw[edge](v2)--node[right]{$- $}(v3);
		\draw[edge](v3)--node[below]{$-$}(v4);
		\draw[edge](v1)--node[left]{$- $}(v4);

	\end{tikzpicture}\caption{the signed undirected graph $ G$ of $\mathcal{P}$.} \end{figure}
\end{minipage}   $$    
Then $G$ is a cycle with all edges are negatively signed. Suppose that $B\in \mathcal{Q}(\mathcal{P})$, then $B$ is similar to
$$\begin{bmatrix}
0&-a&0&-b\\c&0&-d&0\\0&e&0&-f\\g&0&h&0
\end{bmatrix},~\text{where}~a,b,c,d,e,f,g,h>0.$$
The characteristic polynomial of $B$ is $\Ch_B(x)=x^4+(ac+bg+ed+fh)x^2+(acfh+adfg+ebch+ebdg)$. Therefore,
$$\sgn(\Ch_B(x)) = 
(+)+(+)(+)+(+) ~\text{for all} ~x\in \mathbb{R}\setminus\{0\}. 
$$ Hence $V_+(x)=0$ and $V_-(x)=0$, so by Descartes' rule of signs, the number of positive and negative real roots of $\Ch_B(x)$ is zero. Therefore, $S_B=(0,4)$ for all $B\in \mathcal{Q}(\mathcal{P})$, so $\mathcal{P}$ is consistent.
\end{eg}

\begin{eg}\rm
Let $\mathcal{P}\in \mathcal{Q}_4$ be an irreducible, combinatorially symmetric sign pattern with a $0$-diagonal, whose underlying undirected graph is $G$ as in Example~\ref{nex1}. Then $G$ is a cycle with all edges negatively signed, and by Example~\ref{nex1}, $\mathcal{P}$ is not consistent.
\end{eg}
Recall that, by Condition (i) of Theorem~\ref{x2}, if  $\mathcal{P}\in \mathcal{Q}_n$  is consistent and either $n$ is odd with $n\geq 5$ or $n$ is even with $n\geq 4$, then the underlying signed undirected graph $G$ of $\mathcal{P}$ can have at most two positive edges.
The following examples demonstrate that for even $n$, if the underlying graph has exactly two positive edges, then $\mathcal{P}$ need not be consistent.

\begin{eg}\rm
Suppose that $\mathcal{P}\in \mathcal{Q}_4$ is an irreducible combinatorially symmetric sign pattern with a $0$-diagonal, whose underlying signed undirected graph $G$ is given in Fig.\ref{fign1.2}.
$$ \begin{minipage}{.5\textwidth}
$\mathcal{P}=\begin{bmatrix}
	0&-&0&+\\+&0&-&0\\0&+&0&+\\+&0&+&0
\end{bmatrix}$
\end{minipage}
\hspace{-3cm}
\begin{minipage}{.45\textwidth}
\begin{figure}[H] \label{fign1.2}
	\tikzset{node distance=1cm}
	\centering
	\begin{tikzpicture}
		\tikzstyle{vertex}=[draw, circle, inner sep=0pt, minimum size=.15cm, fill=black]
		\tikzstyle{edge}=[,thick]
		\node[vertex,label=above:1](v1)at(-.75,.75){};
		\node[vertex,label=above:2](v2)at(.75,.75){};
		\node[vertex,label=below:3](v3)at(.75,-.75){};
		\node[vertex,label=below:4](v4)at(-.75,-.75){};

		\draw[edge](v1)--node[above]{$- $}(v2);
		\draw[edge](v2)--node[right]{$- $}(v3);
		\draw[edge](v3)--node[below]{$+$}(v4);
		\draw[edge](v1)--node[left]{$+ $}(v4);

	\end{tikzpicture}\caption{The signed undirected graph $ G$ of $\mathcal{P}$.} \end{figure}
\end{minipage}   $$    
Then $G$ is a cycle with exactly two negative edges. Suppose that $B\in \mathcal{Q}(\mathcal{P})$, then $B$ is similar to
$$\begin{bmatrix}
0&-a&0&b\\c&0&-d&0\\0&e&0&f\\g&0&h&0
\end{bmatrix},~\text{where}~a,b,c,d,e,f,g,h>0.$$
The characteristic polynomial of $B$ is $\Ch_B(x)=x^4+(ac-bg+ed-fh)x^2-(acfh+adfg+ebch+ebdg)$. Therefore,
$$\sgn(\Ch_B(x)) = 
(+)+(*)(+)-(+) ~\text{for all} ~x\in \mathbb{R}\setminus\{0\}, 
$$ where $*$ denotes an arbitrary sign. Hence $V_+(x)=1$ and $V_-(x)=1$, so by Descartes' rule of signs, the number of positive and negative real roots of $\Ch_B(x)$ is exactly one. Therefore, $S_B=(2,2)$ for all $B\in \mathcal{Q}(\mathcal{P})$, so $\mathcal{P}$ is consistent.
\end{eg}

\begin{eg}\rm
	 Suppose that $\mathcal{P}\in \mathcal{Q}_4$ is an irreducible, combinatorially symmetric sign pattern with a $0$-diagonal, whose underlying signed undirected graph $G$ is given in Fig.\ref{fign1.3}.
	$$ \begin{minipage}{.5\textwidth}
		$\mathcal{P}=
\begin{bmatrix}
	0 & + & 0 & +\\
	- & 0 & - & 0 \\
	0 & + & 0 & +\\
    + & 0 & + & 0
\end{bmatrix}$
	\end{minipage}
	\hspace{-3cm}
	\begin{minipage}{.45\textwidth}
		\begin{figure}[H] \label{fign1.3}
			\tikzset{node distance=1cm}
			\centering
			\begin{tikzpicture}
				\tikzstyle{vertex}=[draw, circle, inner sep=0pt, minimum size=.15cm, fill=black]
				\tikzstyle{edge}=[,thick]
				\node[vertex,label=above:1](v1)at(-.75,.75){};
				\node[vertex,label=above:2](v2)at(.75,.75){};
				\node[vertex,label=below:3](v3)at(.75,-.75){};
				\node[vertex,label=below:4](v4)at(-.75,-.75){};

				\draw[edge](v1)--node[above]{$- $}(v2);
				\draw[edge](v2)--node[right]{$- $}(v3);
				\draw[edge](v3)--node[below]{$+ $}(v4);
				\draw[edge](v1)--node[left]{$+ $}(v4);

			\end{tikzpicture}\caption{the signed undirected graph $ G$ of $\mathcal{P}$.} \end{figure}
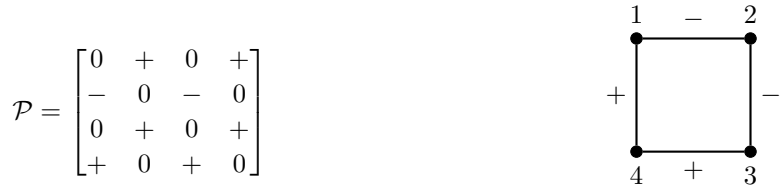
	\end{minipage}   $$    
	Then $G$ is a cycle with exactly two negative edges.
   Take
$$B_1=\begin{bmatrix}
			0&1&0&1\\-1&0&-1&0\\0&1&0&1\\1&0&1&0
		\end{bmatrix},~ B_2=\begin{bmatrix}
			0&2&0&1\\-1&0&-1&0\\0&1&0&1\\1&0&1&0
		\end{bmatrix}\in \mathcal{Q}(\mathcal{P}).$$
        Then $S_{B_1}=(4,0)$ and $S_{B_2}=(2,2)$. Therefore, $\mathcal{P}$ is not consistent.
\end{eg}
Suppose that $\mathcal{P}$ is a combinatorially symmetric sign pattern whose underlying signed undirected graph $G$ is connected and contains a cycle but no loop. We establish necessary conditions for such sign patterns $\mathcal{P}$ to be consistent. For this, we recall the following lemma from~\cite{2025}.

\begin{lemma}[Lemma~3.2, \cite{2025}]\label{l28}
Let $\mathcal{P}\in \mathcal{Q}_n$ be an irreducible, combinatorially symmetric sign pattern with a $0$-diagonal, whose underlying signed undirected graph is $G$ and the signed directed graph is $D$. Suppose that $G$ has exactly one cycle $\mathcal{C}=u_1u_2\cdots u_k$ of length $k$ and the distance of $\mathcal{C}$ from any leaf in $G$ is even. Then the maximum length of a composite cycle in $D$ is equal to the sum of the length of $\mathcal{C}$ and the maximum length of a composite cycle in $D\setminus V(\mathcal{C})$.       
\end{lemma}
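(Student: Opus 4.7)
The plan is to examine the two possible forms of a composite cycle of $D$. Since $G$ has a unique cycle $\mathcal{C}$, the only simple cycles of $D$ are the $2$-cycles coming from edges of $G$ together with $\mathcal{C}$ itself (in either orientation). Consequently every composite cycle in $D$ is either (i)~a product of disjoint $2$-cycles, corresponding to a matching $M$ in $G$ and having length $2|M|$, or (ii)~the cycle $\mathcal{C}$ together with disjoint $2$-cycles in $D\setminus V(\mathcal{C})$, corresponding to a matching $M'$ in $G\setminus V(\mathcal{C})$ and having length $k+2|M'|$. Since $G\setminus V(\mathcal{C})$ is a forest, the maximum composite cycle length in $D\setminus V(\mathcal{C})$ equals $2\mu(G\setminus V(\mathcal{C}))$, where $\mu$ denotes the matching number. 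Form~(ii) immediately yields the inequality ``$\ge$'' in the claim, so it suffices to show $2\mu(G)\le k+2\mu(G\setminus V(\mathcal{C}))$.

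For each $i$ let $T_i$ be the tree of $G$ rooted at $u_i$ that is attached to $\mathcal{C}$ there (possibly trivial), and set $T_i'=T_i\setminus\{u_i\}$, so that $G\setminus V(\mathcal{C})=\bigsqcup_{i}T_i'$. Given a matching $M$ of $G$, split $M$ into $M_C$ (edges in $\mathcal{C}$), $M_1$ (edges joining some $u_i$ to $T_i'$), and $M_T$ (edges in $\bigsqcup_{i}T_i'$); counting vertices of $\mathcal{C}$ gives $2|M_C|+|M_1|\le k$. The central estimate is
\[
|M\cap E(T_i)|\le\mu(T_i')\qquad\text{for every }i.
\]
This is clear if no edge of $M$ joins $u_i$ to $T_i'$. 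Otherwise $\{u_i,c\}\in M$ for a unique child $c$ of $u_i$ in $T_i$, and $(M\cap E(T_i))\setminus\{u_ic\}$ is a matching of $T_i'\setminus\{c\}$; the estimate then reduces to the claim that $\mu(T_i'\setminus\{c\})=\mu(T_i')-1$, i.e., that $c$ lies in every maximum matching of $T_i'$.

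The parity hypothesis enters precisely here. Writing $T_i^c$ for the subtree of $T_i$ rooted at $c$, its leaves are leaves of $T_i$ and therefore lie at odd distance from $c$, being one less than their even distance from $u_i$. I would prove by mutual induction on the number of vertices the following two statements for any rooted tree $(T,r)$: (A)~if all leaves of $T$ lie at odd distance from $r$, then $\mu(T)=\mu(T-r)+1$; (B)~if all leaves lie at even distance from $r$ (including the case $T=\{r\}$), then $\mu(T)=\mu(T-r)$. The inductive step decomposes $T$ into $\{r\}$ and the subtrees $T^{v_j}$ rooted at the children $v_j$ of $r$; the parity of each $T^{v_j}$ relative to its own root is opposite to that of $T$, so (A) and (B) feed into each other through the recurrence $\mu(T)=\max\bigl(\sum_j\mu(T^{v_j}),\,1+\max_{j}\bigl(\mu(T^{v_j}-v_j)+\sum_{\ell\neq j}\mu(T^{v_\ell})\bigr)\bigr)$. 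Applying (A) to $T_i^c$ yields $\mu(T_i^c)-\mu(T_i^c-c)=1$, and since the remaining components of $T_i'$ are unaffected, $\mu(T_i')-\mu(T_i'\setminus\{c\})=1$.

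Summing the central estimate over $i$ gives $|M_1|+|M_T|\le\mu(G\setminus V(\mathcal{C}))$. Combining this with $2|M_C|+|M_1|\le k$ and the trivial bound $|M_T|\le\mu(G\setminus V(\mathcal{C}))$ yields
\[
2|M|=(2|M_C|+|M_1|)+(|M_1|+|M_T|)+|M_T|\le k+2\mu(G\setminus V(\mathcal{C})),
\]
as required. The main obstacle is the mutual induction (A)/(B); once it is established, the remainder of the argument is direct bookkeeping.
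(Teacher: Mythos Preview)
The paper does not supply a proof of this lemma; it is quoted from~\cite{2025} and invoked as a black box in the proof of Theorem~\ref{xx1}. Your argument is therefore not being compared against anything in the present paper, but it is a correct self-contained proof. The reduction to the matching inequality $2\mu(G)\le k+2\mu(G\setminus V(\mathcal{C}))$ is the natural move once one observes that the simple cycles of $D$ are exactly the $2$-cycles and the two orientations of $\mathcal{C}$, and the mutual induction (A)/(B) is precisely the device through which the even-distance hypothesis forces every child $c$ of a cycle vertex to be covered by every maximum matching of $T_i'$.

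One point you should make explicit: the hypothesis also rules out the possibility that such a child $c$ is itself a leaf of $G$, since it would then lie at distance~$1$ from $\mathcal{C}$. Hence $T_i^c$ always has at least two vertices and statement~(A) genuinely applies to it. Without this observation the degenerate case $T_i^c=\{c\}$ would give $\mu(T_i'\setminus\{c\})=\mu(T_i')$ rather than $\mu(T_i')-1$, and the central estimate would fail there.
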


\begin{theorem}\label{xx1}
Let $\mathcal{P}\in \mathcal{Q}_n$ be an irreducible, combinatorially symmetric sign pattern with a $0$-diagonal, whose underlying signed undirected graph $G$ has exactly one cycle $\mathcal{C}=u_1u_2\cdots u_k$, and the distance of $\mathcal{C}$ from any leaf in $G$ is even. Then $\mathcal{P}$ is not consistent if any one of the following holds.
\begin{itemize}
\item[i.] $\mathcal{C}$ has at least two nonadjacent positively signed edges.
\item[ii.] $ \mathcal{C}$ has more than one maximal signed path of odd length.
\item[iii.] $k$ is odd and $ \mathcal{C}$ has a positively signed edge. 
\end{itemize}
\end{theorem}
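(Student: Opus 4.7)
The plan is to extend Theorem~\ref{x2} (which treats the case $G=\mathcal{C}$) by appending a fixed background composite cycle $\Gamma_0$ to the cycle constructions on $\mathcal{C}$. Let $D$ denote the signed directed graph of $\mathcal{P}$, and fix a composite cycle $\Gamma_0$ of maximum length $m_0$ in $D\setminus V(\mathcal{C})$. Since $G\setminus V(\mathcal{C})$ is a forest, $\Gamma_0$ consists only of $2$-cycles, and by Lemma~\ref{l28} the maximum composite cycle length in $D$ equals $M=k+m_0$. For each of (i), (ii), (iii) I construct two composite cycles $\Gamma_1, \Gamma_2$ in $D$ whose restrictions to $D\setminus V(\mathcal{C})$ both equal $\Gamma_0$ and whose restrictions to $V(\mathcal{C})$ parallel the constructions used in the corresponding part of Theorem~\ref{x2}; the perturbation matrices $B_{\Gamma_1}(\epsilon), B_{\Gamma_2}(\epsilon)\in\mathcal{Q}(\mathcal{P})$ defined by \eqref{xe3} and \eqref{e2} will then realize distinct eigenvalue frequencies for sufficiently small $\epsilon>0$, so $\mathcal{P}$ is not consistent.

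For case (i), taking $\Gamma_1=(p,p+1)(q,q+1)\Gamma_0$ and $\Gamma_2=\mathcal{C}\Gamma_0$, the two $2$-cycles on $\mathcal{C}$ arising from the two positive edges contribute four real nonzero eigenvalues (near $\pm 10, \pm 10^2$) to $B_{\Gamma_1}(\epsilon)$, whereas the single $k$-cycle $\mathcal{C}$ contributes at most two real eigenvalues to $B_{\Gamma_2}(\epsilon)$. Case (iii) is entirely analogous: with $\Gamma_1=(p,p+1)\Gamma_0$ and $\Gamma_2=\mathcal{C}\Gamma_0$, the $2$-cycle $(p,p+1)$ yields at least two real eigenvalues of $B_{\Gamma_1}(\epsilon)$, while the odd cycle $\mathcal{C}$ yields exactly one real eigenvalue of $B_{\Gamma_2}(\epsilon)$. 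In both cases the identical copy of $\Gamma_0$ inside $\Gamma_1$ and $\Gamma_2$ contributes the same real eigenvalues to both matrices, so the mismatch created on $\mathcal{C}$ forces $i_r(B_{\Gamma_1}(\epsilon)) > i_r(B_{\Gamma_2}(\epsilon))$.

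Case (ii) reduces directly to Lemma~\ref{x1}. The matching construction in the proof of Theorem~\ref{x2}(ii) yields matchings $\mathcal{M}_1, \mathcal{M}_2$ on $\mathcal{C}$ consisting only of positively and only of negatively signed edges, respectively, with $2l(\mathcal{M}_1)+2l(\mathcal{M}_2)\geq k+2$. Let $\Gamma_-$ be the composite cycle in $D$ formed by the $2$-cycles arising from $\mathcal{M}_1$ (which are negative $2$-cycles) together with the negative $2$-cycles in $\Gamma_0$, and define $\Gamma_+$ analogously using $\mathcal{M}_2$ and the positive $2$-cycles of $\Gamma_0$. Then $\Gamma_-$ (resp.\ $\Gamma_+$) consists only of negative (resp.\ positive) $2$-cycles in $D$, and $l(\Gamma_-)+l(\Gamma_+)\geq(k+2)+m_0=M+2$, so Lemma~\ref{x1} applies.

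The main obstacle is controlling the $n-l(\Gamma)$ eigenvalues of $B_\Gamma(0)$ that equal zero: under the $\epsilon$-perturbation each may split into either a small real eigenvalue or a small complex conjugate pair, and a priori this could narrow the gap in $i_r$ between $B_{\Gamma_1}(\epsilon)$ and $B_{\Gamma_2}(\epsilon)$ in cases (i) and (iii). I expect to resolve this by arguing that the close-to-zero eigenvalues of $B_{\Gamma_2}(\epsilon)$ are governed by the subpattern of $\mathcal{P}$ on $V\setminus V(\Gamma_2)$, a subpattern that also controls a matching portion of the close-to-zero spectrum of $B_{\Gamma_1}(\epsilon)$, while the additional rigid real eigenvalues near $\pm 10, \pm 10^2$ (or $\pm 10$) contributed by the $2$-cycles on $\mathcal{C}$ in $B_{\Gamma_1}(\epsilon)$ survive intact; this parallelism is what makes the strict inequality persist.
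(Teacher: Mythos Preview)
Your approach is essentially the same as the paper's: append a fixed maximum-length composite cycle $\Gamma_0$ in $D\setminus V(\mathcal{C})$ to the cycle constructions on $\mathcal{C}$ from Theorem~\ref{x2}, and compare eigenvalue frequencies of the resulting perturbed matrices. Part~(ii) is identical to the paper's argument. For (i) and (iii) the paper uses a separate large scaling parameter on $\Gamma_0$ rather than the uniform $10^p$ scheme of~\eqref{xe3}, but this is a cosmetic difference.

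The obstacle you flag is real, but your proposed resolution is vaguer than necessary. The clean fix, which the paper uses implicitly, is this: since $M=k+m_0$ is the maximum composite-cycle length in $D$ (Lemma~\ref{l28}), every $B\in\mathcal{Q}(\mathcal{P})$ has $E_j(B)=0$ for $j>M$, so $x^{\,n-M}\mid \Ch_B(x)$ and hence $i_r(B)\ge n-M$ for all $B\in\mathcal{Q}(\mathcal{P})$. Now $\Gamma_2=\mathcal{C}\,\Gamma_0$ has length exactly $M$, so $B_{\Gamma_2}(0)$ has precisely $n-M$ zero eigenvalues, all of which remain exactly zero after perturbation; the nonzero eigenvalues contribute at most $2$ real ones from the $k$-cycle and exactly $2s$ real ones from $\Gamma_0$ (where $s$ is the number of negative $2$-cycles in $\Gamma_0$), giving $i_r(B_{\Gamma_2}(\epsilon))\le 2+2s+(n-M)$. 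On the other side, $B_{\Gamma_1}(\epsilon)$ has at least $n-M$ exact zero eigenvalues plus $4$ simple real eigenvalues near $\pm10,\pm10^2$ plus $2s$ from $\Gamma_0$, so $i_r(B_{\Gamma_1}(\epsilon))\ge 4+2s+(n-M)$; the extra $k-4$ eigenvalues of $B_{\Gamma_1}(0)$ at zero may split either way but are irrelevant to the inequality. This replaces your ``subpattern governs the near-zero spectrum'' heuristic with a one-line structural fact and completes the argument.
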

\begin{proof}
   Assume that $G$ does not have leaves, then Theorem~\ref{x2} implies that $\mathcal{P}$ is not consistent. Therefore, suppose that $G$ contains at least one leaf. Let $D$ be the underlying signed directed graph of $\mathcal{P}$ and consider a composite cycle $\Gamma = \beta_1 \beta_2 \cdots \beta_l$
of maximum length in $D \setminus V(\mathcal{C})$, where $l(\Gamma) = 2l$ and each $\beta_r$ is a $2$-cycle for $r=1,2,\dots,l$. 
By Lemma~\ref{l28}, it follows that the maximum length of a composite cycle in $D$ is $m = k + 2l$.
\begin{itemize}
    \item[i.]  Suppose that $\mathcal{C}$ has at least two nonadjacent positively signed edges. Then similarly as in the proof of Theorem~\ref{x2}(i), there exist two composite cycles
$\Gamma_1,\Gamma_2$ in $D$ and two real matrix $B_{\Gamma_1}(\epsilon) = [b_{\Gamma_1}(\epsilon)_{ij}],B_{\Gamma_2}(\epsilon) = [b_{\Gamma_2}(\epsilon)_{ij}] \in \mathcal{Q}(\mathcal{P}[\{u_1,u_2,...,u_k\}])$ with  $S_{B_{\Gamma_1}(\epsilon)}\neq S_{B_{\Gamma_2}(\epsilon)}$. Moreover, $B_{\Gamma_1}(\epsilon)$ has four algebraically simple real eigenvalues close to $\pm 10$ and $\pm 10^2$ and the other eigenvalues lies close to zero, while the eigenvalues of $B_{\Gamma_2}(\epsilon)$ are close to the $k$-th roots of unity.
Let,
$$M>\max\{1,|\lambda|: \lambda~ \text{is an eigenvalue of either} ~B_{\Gamma_1}(\epsilon)~\text{or}~B_{\Gamma_2}(\epsilon)\}.$$ 
Consider $$\Gamma_1'=\Gamma_1\Gamma \quad\text{and}\quad \Gamma_2'=\Gamma_2\Gamma,$$
and define the real matrices $B_{\Gamma_t'}(\epsilon)=[b_{\Gamma_t'}(\epsilon)_{ij}]\in \mathcal{Q}(\mathcal{P})$ for $t=1,2$, as
$$|b_{\Gamma_t'}(\epsilon)_{ij}|= \begin{cases}
	|b_{\Gamma_t}(\epsilon)_{ij}| & \text{if}\ p_{ij}~ \text{is in } \Gamma_t \\
	M^p & \text{if}\ p_{ij}~ \text{is in}~ \beta_p,~\text{for}~p=1,2,...,l \\
	\epsilon & \text{if}~ p_{ij}\neq 0~\text{and not in}~ \Gamma_t, \Gamma\\
	0 & \text{elsewhere.}
\end{cases}$$
For $\epsilon > 0$ sufficiently small, the eigenvalues of $B_{\Gamma_t'}(\epsilon)$ remain close to the eigenvalues of $B_{\Gamma_t}(\epsilon)$  for $t=1,2$, together with the second complex roots of $M^{2p}$ or $-M^{2p}$ depending on the sign of $\beta_p$, for $p=1,2,...,l$.
Therefore, $i_r(B_{\Gamma_1'}(\epsilon))\geq 4+2s+(n-m)$ and $i_r(B_{\Gamma_2'}(\epsilon))\leq 2+2s+(n-m)$, where $s$ be the number of negative $2$-cycles in $\Gamma$.
Hence, $S_{B_{\Gamma_1'}(\epsilon)}\neq S_{B_{\Gamma_2'}(\epsilon)}$,  $\mathcal{P}$ is not consistent.

\item[ii.]  If $\mathcal{C}$ has more than one maximal signed path of odd length, then similarly as in the proof of Theorem \ref{x2}(ii), $G$ has two matchings $\mathcal{M}_1$ consisting only of positive edges and $\mathcal{M}_2$ consisting only of negative edges in $\mathcal{C}$, such that  
$$l(\mathcal{M}_1)+l(\mathcal{M}_2)= \begin{cases}
	k+3 & \text{if $k$ is odd} \\
	k+2 & \text{if $k$ is even}.
\end{cases}$$
Suppose that $\Gamma_1$, $\Gamma_2$ are the composite cycles in $D$ corresponding to $\mathcal{M}_1$, $\mathcal{M}_2$, respectively. Let $\bar{\Gamma}_1, \bar{\Gamma}_2$ be two composite cycles in $D\setminus V(\mathcal{C})$ containing only negative, positive $2$-cycles of $\Gamma$, respectively. Consider $$\Gamma_1'=\Gamma_1\bar{\Gamma}_1\quad\text{and}\quad\Gamma_2'=\Gamma_2\bar{\Gamma}_2.$$
Therefore, $\Gamma_1'$ (respectively, $\Gamma_2'$) is a composite cycle in the signed directed graph $D$ of $\mathcal{P}$ consisting only of negative (respectively, positive) $2$-cycles, and 
$$l(\Gamma_1') + l(\Gamma_2')= \begin{cases}
	m+3 & \text{if $k$ is odd} \\
	m+2 & \text{if $k$ is even},
\end{cases}$$ where $m$ is the length of maximal length cycle in $D$. It then follows from Lemma~\ref{x1} that $\mathcal{P}$ is not consistent.

\item[iii.]  Suppose that $\mathcal{C}$ has positively signed edges. Then similarly as in the proof of Theorem~\ref{x2}(iii), there exist two composite cycles
$\Gamma_1,\Gamma_2$ in $D$ and two real matrix $B_{\Gamma_1}(\epsilon) = [b_{\Gamma_1}(\epsilon)_{ij}],B_{\Gamma_2}(\epsilon) = [b_{\Gamma_2}(\epsilon)_{ij}] \in \mathcal{Q}(\mathcal{P}[\{u_1,u_2,...,u_k\}])$ with  $S_{B_{\Gamma_1}(\epsilon)}\neq S_{B_{\Gamma_2}(\epsilon)}$. Moreover, $B_{\Gamma_1}(\epsilon)$ has two algebraically simple real eigenvalues close to $\pm 1$ and the other eigenvalues lies close to zero, while the eigenvalues of $B_{\Gamma_2}(\epsilon)$ are close to the $k$-th roots of unity.

Then similarly as in (i), define two real matrices $B_{\Gamma_1'}(\epsilon), B_{\Gamma_2'}(\epsilon)\in \mathcal{Q}(\mathcal{P})$, and for $\epsilon>0$ sufficiently small, the eigenvalues of $B_{\Gamma_t'}(\epsilon)$ remain close to the eigenvalues of $B_{\Gamma_t}(\epsilon)$  for $t=1,2$, together with the second complex roots of $M^{2p}$ or $-M^{2p}$ depending on the sign of $\beta_p$, for $p=1,2,...,l$.
Therefore, $i_r(B_{\Gamma_1'}(\epsilon))\geq 2+2s+(n-m)$, and since $k$ is odd, we have $i_r(B_{\Gamma_2'}(\epsilon))= 1+2s+(n-m)$, where $s$ be the number of negative $2$-cycles in $\Gamma$.
Hence, $S_{B_{\Gamma_1'}(\epsilon)}\neq S_{B_{\Gamma_2'}(\epsilon)}$,  $\mathcal{P}$ is not consistent.
\end{itemize}

\end{proof}

The following are examples of patterns satisfying conditions (i),(ii), and (iii), respectively,  of Theorem~\ref{xx1} and so are not consistent.

\begin{eg}\label{xxeg12}\rm
Let $\mathcal{P}\in \mathcal{Q}_{10}$ be an irreducible, combinatorially symmetric sign pattern with a $0$-diagonal, whose underlying undirected graph $G$ is  given in Fig.\ref{xnfig4.1.},

\begin{figure}[H]
\centering
\tikzset{
  vertex/.style={draw, circle, inner sep=0pt, minimum size=0.15cm, fill=black},
  edge/.style={thick}
}
\begin{tikzpicture}

\node[vertex, label=above:$u_2$] (v2)  at (-2,1.25) {};
\node[vertex, label=above:$u_1$] (v1)  at (0,1.25) {};
\node[vertex, label=below:$u_3$] (v3)  at (-2,0) {};
\node[vertex, label=below:$u_4$] (v4)  at (0,0) {};

\node[vertex] (v5)  at (1.5,0) {};
\node[vertex] (v6)  at (3,0) {};
\node[vertex] (v7)  at (-3.5,0) {};
\node[vertex] (v8)  at (-5,0) {};

\node[vertex] (v14) at (-3.5,1.25) {};
\node[vertex] (v15) at (-4.75,1) {};

\draw[edge] (v1)--node[above]{$+$}(v2);
\draw[edge] (v2)--node[left]{$+$}(v3);
\draw[edge] (v3)--node[below]{$+$}(v4);
\draw[edge] (v4)--node[below]{$+$}(v5);
\draw[edge] (v1)--node[right]{$+$}(v4);

\draw[edge] (v5)--node[below]{$+$}(v6);


\draw[edge] (v3)--node[below]{$-$}(v7);
\draw[edge] (v7)--node[below]{$+$}(v8);
\draw[edge] (v7)--node[left]{$+$}(v14);
\draw[edge] (v7)--node[left]{$+$}(v15);

\end{tikzpicture}
\caption{The signed undirected graph $G$ of $\mathcal{P}$.}
\label{xnfig4.1.}
\end{figure}

Then $G$ has exactly one cycle $\mathcal{C}=u_1u_2u_3u_4$, of length $4$, which contains two nonadjacent positive edges $\{u_1,u_2\}, \{u_3,u_4\}$. The distance of the cycle $\mathcal{C}$  from any leaf is even. Thus, $\mathcal{P}$ satisfies condition (i) of Theorem~\ref{x2}. Consequently, $\mathcal{P}$ is not consistent.
\end{eg}

\begin{eg}\rm
Let $\mathcal{P}\in \mathcal{Q}_{10}$ be an irreducible, combinatorially symmetric sign pattern with a $0$-diagonal, whose underlying undirected graph $G$ is  given in Fig.\ref{xnfig4.2},

\begin{figure}[H]
\centering
\tikzset{node distance=1cm}
\begin{tikzpicture}
	\tikzstyle{vertex}=[draw, circle, inner sep=0pt, minimum size=0.15cm, fill=black]
	\tikzstyle{edge}=[thick]
	\node[vertex, label=above:$u_2$](v2) at (-2,1.25) {};
	\node[vertex, label=above:$u_1$](v1) at (0,1.25) {};
	\node[vertex, label=below:$u_3$](v3) at (-2,0) {};
	\node[vertex, label=below:$u_4$](v4) at (0,0) {};
	\node[vertex] (v5)  at (1.5,0) {};
\node[vertex] (v6)  at (3,0) {};
\node[vertex] (v7)  at (-3.5,0) {};
\node[vertex] (v8)  at (-5,0) {};

\node[vertex] (v14) at (-3.5,1.25) {};
\node[vertex] (v15) at (-4.75,1) {};

\draw[edge] (v1)--node[above]{$-$} (v2);
	\draw[edge] (v2)--node[left]{$-$} (v3);
	\draw[edge] (v3)--node[below]{$-$} (v4);
\draw[edge] (v4)--node[below]{$+$}(v5);
\draw[edge] (v1)--node[right]{$+$}(v4);

\draw[edge] (v5)--node[below]{$+$}(v6);


\draw[edge] (v3)--node[below]{$-$}(v7);
\draw[edge] (v7)--node[below]{$+$}(v8);
\draw[edge] (v7)--node[left]{$+$}(v14);
\draw[edge] (v7)--node[left]{$+$}(v15);
\end{tikzpicture}
\caption{The signed undirected graph $G$ of $\mathcal{P}$.}
\label{xnfig4.2}
\end{figure}
Then $G$ has exactly one cycle $\mathcal{C}=u_1u_2u_3u_4$, of length $4$, with more than one maximal signed path of odd length, and the distance of the cycle $\mathcal{C}$  from any leaf is even. Thus, $\mathcal{P}$ satisfies condition (ii) of Theorem~\ref{x2}. Consequently, $\mathcal{P}$ is not consistent. 
\end{eg}

\begin{eg}\rm 
Let $\mathcal{P}\in \mathcal{Q}_{9}$ be an irreducible, combinatorially symmetric sign pattern with a $0$-diagonal, whose underlying undirected graph $G$ is  given in Fig.\ref{xnfig4.1},
 

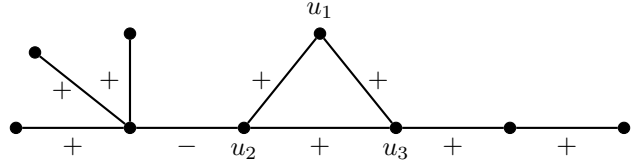
\begin{figure}[H]
\centering
\tikzset{node distance=1cm}
\begin{tikzpicture}
	\tikzstyle{vertex}=[draw, circle, inner sep=0pt, minimum size=0.15cm, fill=black]
	\tikzstyle{edge}=[thick]
	\node[vertex, label=above:$u_1$](v2) at (-1,1.25) {};
	\node[vertex, label=below:$u_2$](v3) at (-2,0) {};
	\node[vertex, label=below:$u_3$](v4) at (0,0) {};
	\node[vertex] (v5)  at (1.5,0) {};
\node[vertex] (v6)  at (3,0) {};
\node[vertex] (v7)  at (-3.5,0) {};
\node[vertex] (v8)  at (-5,0) {};

\node[vertex] (v14) at (-3.5,1.25) {};
\node[vertex] (v15) at (-4.75,1) {};

	\draw[edge] (v2)--node[left]{$+$} (v3);
	\draw[edge] (v2)--node[right]{$+$} (v4);
	\draw[edge] (v3)--node[below]{$+$} (v4);
    
	\draw[edge] (v4)--node[below]{$+$}(v5);

\draw[edge] (v5)--node[below]{$+$}(v6);


\draw[edge] (v3)--node[below]{$-$}(v7);
\draw[edge] (v7)--node[below]{$+$}(v8);
\draw[edge] (v7)--node[left]{$+$}(v14);
\draw[edge] (v7)--node[left]{$+$}(v15);
\end{tikzpicture}
\caption{The signed undirected graph $G$ of $\mathcal{P}$.}
\label{xnfig4.1}
\end{figure}
Then $G$ contains exactly one cycle, namely $\mathcal{C}=u_1u_2u_3$, of length $3$ and whose edges are all positive. The distance of the cycle $\mathcal{C}$  from any leaf is even. Thus, $\mathcal{P}$ satisfies condition (iii) of Theorem~\ref{x2}. Consequently, $\mathcal{P}$ is not consistent.
\end{eg}

Next, we study irreducible, combinatorially symmetric sign patterns with more than one cycle in their underlying signed undirected graph. The following definition is from \cite{2025}.

\begin{defn}[Definition~3.1, \cite{2025}]
Suppose that $\mathcal{P}$ is an irreducible, combinatorially symmetric sign pattern whose underlying signed undirected graph is $G$. Then any two cycles $\mathcal{C}_1$, $\mathcal{C}_2$ in $G$ are called \textit{path-adjacent} if there exists a path $uv_1v_2\cdots v_{r}w$ in $G$ where $u\in V(\mathcal{C}_1)$, $w\in V(\mathcal{C}_2)$ and $v_1,v_2,...,v_{r}$ are not vertices of any cycle in $G$.
\end{defn}

\begin{lemma}[Lemma~3.3, \cite{2025}]\label{nl111}
Let $\mathcal{P}\in \mathcal{Q}_n$ be an irreducible, combinatorially symmetric sign pattern with a $0$-diagonal, whose underlying signed undirected graph is $G$ and the signed directed graph is $D$. Suppose $G$ contains at least one cycle, has no leaf, and let the distance between any two path-adjacent cycles in $G$ be odd. Let $\mathcal{C}=u_1u_2\cdots u_k$ be any cycle in $G$ where $k\geq 3$, then $D$ has a composite cycle of length $n$ containing the directed cycle $(u_1,u_2,...,u_k)$.
\end{lemma}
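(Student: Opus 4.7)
A composite cycle of length $n$ in $D$ containing the directed cycle $(u_1,\ldots,u_k)$ is equivalent to a family of mutually vertex-disjoint simple cycles in $D$ that covers $V(G)$ and includes $(u_1,\ldots,u_k)$. Since $\mathcal{P}$ is combinatorially symmetric with a $0$-diagonal, the simple cycles in $D$ are either $2$-cycles (arising from edges of $G$) or directed orientations of simple cycles of $G$ of length $\geq 3$. The task therefore reduces to partitioning $V(G)\setminus V(\mathcal{C})$ into vertex sets of simple cycles of $G$ disjoint from $V(\mathcal{C})$, together with pairs of vertices forming edges of $G$.

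My approach is a direct construction, prefaced by two structural observations that follow from the odd-distance hypothesis and the absence of leaves. First, distinct simple cycles of $G$ are pairwise vertex-disjoint: if two cycles shared a vertex, the degenerate connecting path of length $0$ would be even, contradicting the hypothesis. Second, every non-cycle vertex has degree exactly $2$ in $G$: if a non-cycle vertex $v$ had three bridge-paths emanating from it toward three distinct cycles with arm-lengths $a, b, c$, then the pairwise distances $a+b$, $b+c$, $a+c$ would all be odd, forcing $2(a+b+c)$ to be odd, which is impossible; combined with the no-leaf hypothesis, this pins the degree at $2$. Consequently, every non-cycle vertex lies on a unique bridge-path whose two endpoints are on cycles of $G$.

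I then construct the composite cycle as follows. Include the directed cycle $(u_1,u_2,\ldots,u_k)$. For each simple cycle $\mathcal{C}'$ of $G$ distinct from $\mathcal{C}$, include one of its two directed orientations. For each bridge-path $u\,v_1 v_2 \cdots v_r\, w$ joining two path-adjacent cycles (with $u,w$ on those cycles and $v_1,\ldots,v_r$ non-cycle vertices), the length $r+1$ is odd by hypothesis, so $r$ is even; include the $r/2$ two-cycles $(v_{2i-1},v_{2i})$ for $i=1,\ldots,r/2$. Vertex-disjointness of the resulting simple cycles follows from the structural observations above, and coverage of $V(G)$ follows because every vertex of $G$ lies either on a cycle of $G$ or on the interior of a unique bridge-path.

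The main obstacle is the rigorous verification of the two structural claims, especially the degree restriction at non-cycle vertices, which relies on the parity argument applied uniformly across the block-cut tree of $G$. Once these claims are in hand, the construction is transparent and the remainder is bookkeeping. A slicker alternative would be induction on the number of simple cycles in $G$, peeling off a leaf cycle of the block-cut tree together with its attached bridge-path and invoking the inductive hypothesis on the residual graph; the difficulty there lies in showing that the no-leaf and odd-distance properties survive the reduction, which is the principal technical hurdle for that approach.
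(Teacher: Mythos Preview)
The paper does not contain its own proof of this lemma; it is quoted verbatim from the external reference~\cite{2025} and used only as an input to Theorem~\ref{xx2}. There is therefore no in-paper argument to compare against.

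Your direct construction is correct and complete in outline, but two steps deserve a slightly sharper justification than you give. For the vertex-disjointness claim, the phrase ``degenerate connecting path of length $0$'' is not quite how the contradiction arises: if $\mathcal{C}_1$ and $\mathcal{C}_2$ share a vertex $v$, take $u=v\in V(\mathcal{C}_1)$ and any neighbour $w$ of $v$ on $\mathcal{C}_2$; the edge $uw$ is a path with no intermediate vertices, so $\mathcal{C}_1,\mathcal{C}_2$ are path-adjacent in the sense of Definition~3.1, and then $\dist(\mathcal{C}_1,\mathcal{C}_2)=0$ violates the odd-distance hypothesis. For the parity of each bridge-path $u\,v_1\cdots v_r\,w$, you implicitly use that the graph distance between the two end-cycles $\mathcal{C}_a\ni u$ and $\mathcal{C}_b\ni w$ is exactly $r+1$; this is true because every $v_i$ lies on no cycle, so each of the $r+1$ edges $\{u,v_1\},\{v_1,v_2\},\ldots,\{v_r,w\}$ is a bridge, and any walk from $\mathcal{C}_a$ to $\mathcal{C}_b$ must cross all of them. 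With these two points made explicit, the construction you describe (orient every cycle of $G$, then pair off the $r$ internal vertices of each bridge-path into $2$-cycles) yields the required composite $n$-cycle.
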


\begin{theorem}\label{xx2}
Let $\mathcal{P}\in \mathcal{Q}_n$ be an irreducible, combinatorially symmetric sign pattern with a $0$-diagonal, whose underlying signed $G$ has at least one cycle, has no leaf, and let the distance between any two path-adjacent cycles in $G$ be odd. Then $\mathcal{P}$ is not consistent if any one of the following holds.
\begin{itemize}
\item[i.] $G$ has a cycle  $\mathcal{C}$ with at least two nonadjacent positively signed edges.
\item[ii.] $G$ has a cycle $\mathcal{C}$ with more than one maximal signed path of odd length.
\item[iii.] $G$ contains a cycle $\mathcal{C}$ of odd length, which has at least one positive edge.
\end{itemize}
\end{theorem}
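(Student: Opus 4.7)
The plan is to follow the template established by Theorems \ref{x2} and \ref{xx1}: for each of (i), (ii), (iii), I construct two composite cycles $\Gamma_1', \Gamma_2'$ in the signed directed graph $D$ of $\mathcal{P}$, both of length $n$, such that the perturbed matrices $B_{\Gamma_1'}(\epsilon), B_{\Gamma_2'}(\epsilon)\in\mathcal{Q}(\mathcal{P})$ have different numbers of real eigenvalues for $\epsilon>0$ sufficiently small. This gives $S_{B_{\Gamma_1'}(\epsilon)}\neq S_{B_{\Gamma_2'}(\epsilon)}$ and hence $\mathcal{P}$ is not consistent. The crucial new ingredient compared with Theorem \ref{xx1} is Lemma \ref{nl111}, which under the hypotheses (no leaves and odd distance between path-adjacent cycles) guarantees that $D$ contains a composite cycle of length $n$ through any prescribed directed cycle $(u_1,u_2,\ldots,u_k)$ coming from a cycle $\mathcal{C}$ in $G$. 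This will allow us to extend local configurations inside $V(\mathcal{C})$ to composite cycles covering all $n$ vertices.

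For case (i), pick two nonadjacent positive edges $\{u_p,u_{p+1}\},\{u_q,u_{q+1}\}$ of $\mathcal{C}$. Mimicking the proof of Theorem \ref{x2}(i), form the composite cycle $\Gamma_1=(u_p,u_{p+1})(u_q,u_{q+1})$ of two positive $2$-cycles, and take $\Gamma_2=(u_1,u_2,\ldots,u_k)$ the directed simple cycle. Lemma \ref{nl111} gives an extension of $\Gamma_2$ to a composite cycle $\Gamma_2'$ of length $n$; an analogous extension produces $\Gamma_1'$ of length $n$ whose restriction to $V(\mathcal{C})$ contains $\Gamma_1$. As in the proofs of Theorem \ref{x2}(i) and Theorem \ref{xx1}(i), $B_{\Gamma_1'}(\epsilon)$ has four algebraically simple real eigenvalues close to $\pm 10,\pm 10^2$ coming from $\Gamma_1$, whereas the contribution from $\Gamma_2$ to $B_{\Gamma_2'}(\epsilon)$ provides eigenvalues near $k$-th roots of unity, giving at most two real ones from that block. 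The common outer part contributes the same real count to both, so $i_r(B_{\Gamma_1'}(\epsilon))>i_r(B_{\Gamma_2'}(\epsilon))$.

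Case (ii) runs parallel to the matching arguments in Theorems \ref{x2}(ii) and \ref{xx1}(ii). From the more than one maximal signed path of odd length in $\mathcal{C}$, extract matchings $\mathcal{M}_1,\mathcal{M}_2$ in $G(V(\mathcal{C}))$ of only positive and only negative edges, with $l(\mathcal{M}_1)+l(\mathcal{M}_2)\geq k+2$. These give composite cycles $\Gamma_1,\Gamma_2$ of negative and positive $2$-cycles in $D(V(\mathcal{C}))$; extending both by a common composite $2$-cycle configuration in $D\setminus V(\mathcal{C})$ (which exists by repeatedly invoking Lemma \ref{nl111}-type arguments on the subgraph) yields $\Gamma_1',\Gamma_2'$ in $D$ of composite cycles with $l(\Gamma_1')+l(\Gamma_2')\geq m+2$, where $m$ is the maximum composite-cycle length in $D$; then Lemma \ref{x1} applies. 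Case (iii) mirrors Theorem \ref{x2}(iii): the positive edge $\{u_p,u_{p+1}\}$ in the odd cycle $\mathcal{C}$ produces a $2$-cycle with real eigenvalues near $\pm 1$, while $\Gamma_2=(u_1,\ldots,u_k)$ contributes exactly one real eigenvalue (since $k$ is odd) from the $k$-th roots of unity; extending both to length $n$ via Lemma \ref{nl111} and fixing a common outer contribution produces the parity mismatch.

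The main obstacle will be handling the extension step when the local composite cycle does not cover all of $V(\mathcal{C})$, as in cases (i) and (iii). Lemma \ref{nl111} is stated for a full simple cycle $(u_1,\ldots,u_k)$, so for $\Gamma_1$ covering only a proper subset of $V(\mathcal{C})$, I must argue separately that the uncovered vertices of $\mathcal{C}$ can be saturated by $2$-cycles along $\mathcal{C}$ or by rerouting through neighbouring cycles using the odd-distance hypothesis. The odd distance between path-adjacent cycles is exactly what lets one toggle the parity of a covering so that all leftover vertices can be paired into $2$-cycles, as in the construction in the proof of Lemma \ref{nl111}. Once the extensions $\Gamma_1',\Gamma_2'$ of length $n$ are in hand, the eigenvalue comparison in each case is routine and parallels the cited earlier theorems.
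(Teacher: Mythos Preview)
Your overall architecture is right, and for parts (i) and (iii) it matches the paper's approach; but the ``main obstacle'' you flag is a phantom, while a real gap in part (ii) goes unnoticed.

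For (i) and (iii) you do \emph{not} need $\Gamma_1'$ to have length $n$. The paper simply fixes one composite cycle $\Gamma=\beta_1\cdots\beta_t$ of maximum length $l$ in $D\setminus V(\mathcal{C})$ (Lemma~\ref{nl111} gives $l=n-k$, so $\Gamma$ covers all of $V(D)\setminus V(\mathcal{C})$) and sets $\Gamma_1'=\Gamma_1\Gamma$, $\Gamma_2'=\Gamma_2\Gamma$. The $k-4$ (resp.\ $k-2$) unsaturated vertices of $V(\mathcal{C})$ just contribute eigenvalues of $B_{\Gamma_1'}(\epsilon)$ near zero; they do not spoil the inequality $i_r(B_{\Gamma_1'}(\epsilon))\geq 4+i_r(B_\Gamma(\epsilon))>2+i_r(B_\Gamma(\epsilon))\geq i_r(B_{\Gamma_2'}(\epsilon))$, where $B_\Gamma(\epsilon)$ is the common $(n-k)\times(n-k)$ block. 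So the ``saturation'' argument you propose is unnecessary.

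The genuine gap is in (ii). You want to extend by a ``common composite $2$-cycle configuration'' in $D\setminus V(\mathcal{C})$ and then invoke Lemma~\ref{x1}. But Lemma~\ref{x1} requires $\Gamma_1'$ to consist \emph{only} of negative $2$-cycles and $\Gamma_2'$ only of positive $2$-cycles. A common extension has $2$-cycles of both signs, so neither $\Gamma_r'$ qualifies; and if instead you split the extension into its negative and positive $2$-cycles (as in Theorem~\ref{xx1}(ii)), you need $l(\bar\Gamma_1)+l(\bar\Gamma_2)=l$, which forces a perfect matching on $G\setminus V(\mathcal{C})$. That can fail here: e.g.\ two triangles joined by a single edge, removing one triangle leaves a triangle with no perfect matching. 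The paper avoids Lemma~\ref{x1} entirely in this case: with $\Gamma_1'=\Gamma_1\Gamma$ and $\Gamma_2'=\Gamma_2\Gamma$ (same $\Gamma$, arbitrary simple cycles $\beta_p$), it shows $i_r(B_{\Gamma_1'}(\epsilon))\geq 2k_1+i_r(B_\Gamma(\epsilon))$ and $i_c(B_{\Gamma_2'}(\epsilon))\geq 2k_2+i_c(B_\Gamma(\epsilon))$, adds these to get $n_r(\mathcal{P})+n_c(\mathcal{P})\geq 2k_1+2k_2+l\geq (k+2)+l=n+2$, and concludes via Remark~\ref{xx5}.
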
 
\begin{proof} 
Assume that $G$ contains exactly one cycle, then according to Theorem~\ref{x2}, we can conclude that $\mathcal{P}$ is not consistent. Suppose that $G$ has more than one cycle, and let $\mathcal{C} = u_1u_2\cdots u_k$ be a cycle in $G$. Let $D$ denote the signed directed graph associated with $G$. 
Consider a composite cycle $\Gamma = \beta_1 \beta_2 \cdots \beta_t$ that has the maximum length in $D \setminus V(\mathcal{C})$, where $l(\Gamma) = l$ and each $\beta_r$ is a simple cycle in $D$ for integers $1 \leq r \leq t$. According to Lemma~\ref{nl111}, then have $n = k + l$. 
\begin{itemize}
\item[i.] Similarly as in the proof of Theorem~\ref{x2}(i), there exist two composite cycles
$\Gamma_1,\Gamma_2$ in $D$ and two real matrix $B_{\Gamma_1}(\epsilon)=[b_{\Gamma_1}(\epsilon)_{ij}],B_{\Gamma_2}(\epsilon)=[b_{\Gamma_2}(\epsilon)_{ij}] \in \mathcal{Q}(\mathcal{P}[\{u_1,u_2,...,u_k\}])$ with $S_{B_{\Gamma_1}(\epsilon)}\neq S_{B_{\Gamma_2}(\epsilon)}$. Moreover, $B_{\Gamma_1}(\epsilon)$ has four algebraically simple real eigenvalues close to $\pm 10$ and $\pm 10^2$ and the other eigenvalues lies close to zero, while the eigenvalues of $B_{\Gamma_2}(\epsilon)$ are close to the $k$-th roots of unity. Let,
$$M>\max\{1,|\lambda|: \lambda~ \text{is an eigenvalue of either} ~B_{\Gamma_1}(\epsilon)~\text{or}~B_{\Gamma_2}(\epsilon)\}.$$ 
Consider $$\Gamma_1'=\Gamma_1\Gamma\quad\text{and}\quad\Gamma_2'=\Gamma_2\Gamma$$
and define the real matrices $B_{\Gamma_r'}(\epsilon)=[b_{\Gamma_r'}(\epsilon)_{ij}]\in \mathcal{Q}(\mathcal{P})$ for $r=1,2$, as
$$|b_{\Gamma_r'}(\epsilon)_{ij}|= \begin{cases}
	|b_{\Gamma_r}(\epsilon)_{ij}| & \text{if}\ p_{ij}~ \text{is in } \Gamma_r \\
	M^p & \text{if}\ p_{ij}~ \text{is in}~ \beta_p,~\text{for}~p=1,2,...,t \\
	\epsilon & \text{if}~ p_{ij}\neq 0~\text{and not in}~\Gamma_r, \Gamma\\
	0 & \text{elsewhere.}
\end{cases}$$
For $\epsilon > 0$ sufficiently small , the eigenvalues of $B_{\Gamma_r'}(\epsilon)$ remain close to the eigenvalues of $B_{\Gamma_r}(\epsilon)$  for $r=1,2$, together with the $l(\beta_p)$-th complex roots of $M^{pl(\beta_p)}$, $-M^{pl(\beta_p)}$, depending on the sign of $\beta_p$, for $p=1,2,...,t$. 

Suppose that $B_{\Gamma}(\epsilon)\in \mathcal{Q}(\mathcal{P}[\{u_1,u_2,...,u_k\}^c])$ is the submatrix of both $B_{\Gamma_1'}(\epsilon), B_{\Gamma_2'}(\epsilon)$. Therefore, $i_r(B_{\Gamma_1'}(\epsilon))\geq 4+i_r(B_{\Gamma}(\epsilon))$ and $i_r(B_{\Gamma_2'}(\epsilon))\leq 2+i_r(B_{\Gamma}(\epsilon))$. 
Thus $S_{B_{\Gamma_1'}(\epsilon)} \neq S_{B_{\Gamma_2'}(\epsilon)}$, and hence $\mathcal{P}$ is not consistent.

\item[ii.] Similarly as in the proof of Theorem~\ref{x2}(ii) that there exist two matchings $\mathcal{M}_1=\{\alpha_{i_1},\alpha_{i_2},..., \alpha_{i_{k_1}}\}$ and $\mathcal{M}_2=\{\alpha_{j_1},\alpha_{j_2},..., \alpha_{j_{k_2}}\}$ in $G$ such that $\mathcal{M}_1$ contains only positive edges and $\mathcal{M}_2$ contains only negative edges of $\mathcal{C}$, satisfying $2k_1 + 2k_2 \geq k + 2$, where $\alpha_t$ for $t=1,2,...,k$ are the edges of $\mathcal{C}$. Let $\Gamma_1$, $\Gamma_2$ denote the composite cycles in $D$ corresponding to $\mathcal{M}_1$, $\mathcal{M}_2$, respectively. Consider
 $$\Gamma_1'=\Gamma_1\Gamma\quad\text{and}\quad\Gamma_2'=\Gamma_2\Gamma.$$ Define real matrices $B_{\Gamma_1'}(\epsilon)=[b_{\Gamma_1'}(\epsilon)_{ij}]\in \mathcal{Q}(\mathcal{P})$ as
$$|b_{\Gamma_1'}(\epsilon)_{ij}|= \begin{cases}
	10^p & \text{if}\ p_{ij}~ \text{is}~ \text{in}~ \alpha_p,~\text{for}~p=i_1,i_2,...,i_{k_1} \\
	11^p & \text{if}\ p_{ij}~ \text{is}~ \text{in}~ \beta_p,~\text{for}~p=1,2,...,t \\
	\epsilon & \text{if}~ p_{ij}\neq 0~\text{and not in both}~ \Gamma_1,~ \Gamma\\
	0 & \text{elsewhere.}
\end{cases}$$ Similarly, define the real matrix $B_{\Gamma_2'} (\epsilon)\in \mathcal{Q}(\mathcal{P})$.
Suppose that $B_{\Gamma}(\epsilon)\in \mathcal{Q}(\mathcal{P}[\{u_1,u_2,...,u_k\}^c])$ is a submatrix of both $B_{\Gamma_1'}(\epsilon)$, $B_{\Gamma_2'}(\epsilon)$.
Since $\Gamma_1$ contains only negative $2$-cycles. Hence, for $\epsilon > 0$ sufficiently small, $$i_r(B_{\Gamma_1'}(\epsilon))\geq 2k_1+i_r(B_{\Gamma}(\epsilon))
.$$

 Also, $\Gamma_2$ contains only positive $2$-cycles, for $\epsilon > 0$ sufficiently small $$i_c(B_{\Gamma_1'}(\epsilon))\geq 2k_2+i_c(B_{\Gamma}(\epsilon)).$$
Therefore,
$$i_r(B_{\Gamma_1'}(\epsilon))+i_c(B_{\Gamma_2'}(\epsilon))\geq (2k_1+i_r(B_{\Gamma}(\epsilon)))+(2k_2+i_c(B_{\Gamma}(\epsilon)))\geq n+2,$$
since $i_l(B_{\Gamma}(\epsilon)))+i_c(B_{\Gamma}(\epsilon))=l$ and $n=k+l$.
Hence, by Remark \ref{xx5}, $\mathcal{P}$ is not consistent.

\item[iii.] Suppose that $\mathcal{C}$ has positively signed edges. Then similarly as in the proof of Theorem~\ref{x2}(iii), there exist two composite cycles
$\Gamma_1,\Gamma_2$ in $D$ and two real matrix $B_{\Gamma_1}(\epsilon) = [b_{\Gamma_1}(\epsilon)_{ij}],B_{\Gamma_2}(\epsilon) = [b_{\Gamma_2}(\epsilon)_{ij}] \in \mathcal{Q}(\mathcal{P}[\{u_1,u_2,...,u_k\}])$ with  $S_{B_{\Gamma_1}(\epsilon)}\neq S_{B_{\Gamma_2}(\epsilon)}$. Moreover, $B_{\Gamma_1}(\epsilon)$ has two algebraically simple real eigenvalues close to $\pm 1$ and the other eigenvalues lies close to zero, while the eigenvalues of $B_{\Gamma_2}(\epsilon)$ are close to the $k$-th roots of unity.

Then similarly as in part~(i), define two real matrices $B_{\Gamma_1'}(\epsilon), B_{\Gamma_2'}(\epsilon)\in \mathcal{Q}(\mathcal{P})$, and for $\epsilon>0$ sufficiently small, the eigenvalues of $B_{\Gamma_r'}(\epsilon)$ remain close to the eigenvalues of $B_{\Gamma_r}(\epsilon)$  for $r=1,2$, together with the $l(\beta_p)$-th complex roots of $M^{pl(\beta_p)}$, $-M^{pl(\beta_p)}$, depending on the sign of $\beta_p$, for $p=1,2,...,t$.
Hence, $S_{B_{\Gamma_1'}(\epsilon)}\neq S_{B_{\Gamma_2'}(\epsilon)}$,  $\mathcal{P}$ is not consistent.
\end{itemize}

\end{proof}

The following are examples of patterns satisfying conditions (i), (ii), and (iii), respectively, of Theorem~\ref{xx2}. Hence, they are not consistent.

\begin{eg}\label{egx1.4}\rm
Let $\mathcal{P}\in \mathcal{Q}_7$ be an irreducible, combinatorially symmetric sign pattern with a $0$-diagonal, whose underlying undirected graph $G$ is  given in Fig.\ref{xnfig5.1},

\begin{figure}[H]
\centering
\tikzset{node distance=1cm}
\begin{tikzpicture}
	\tikzstyle{vertex}=[draw, circle, inner sep=0pt, minimum size=0.15cm, fill=black]
	\tikzstyle{edge}=[thick]
	\node[vertex, label=above:$u_1$](v1) at (0,1.25) {};
	\node[vertex, label=above:$u_2$](v2) at (-2,1.25) {};
	\node[vertex, label=below:$u_3$](v3) at (-2,0) {};
	\node[vertex, label=below:$u_4$](v4) at (0,0) {};
	\node[vertex, label=below:$u_5$](v5) at (1.5,0) {};
	\node[vertex, label=below:$u_6$](v6) at (3.5,0) {};
	\node[vertex, label=above:$u_7$](v7) at (2.5,1.25) {};
	
	\draw[edge] (v1)--node[above]{$+$} (v2);
	\draw[edge] (v2)--node[left]{$+$} (v3);
	\draw[edge] (v3)--node[below]{$+$} (v4);
	\draw[edge] (v4)--node[right]{$+$} (v1);
	\draw[edge] (v4)--node[below]{$+$} (v5);
	\draw[edge] (v5)--node[below]{$+$} (v6);
	\draw[edge] (v7)--node[right]{$+$} (v6);
	\draw[edge] (v7)--node[left]{$+$} (v5);

\end{tikzpicture}
\caption{The signed undirected graph $G$ of $\mathcal{P}$.}
\label{xnfig5.1}
\end{figure}
Then $G$ has a cycle $\mathcal{C}=u_1u_2u_3u_4$ with two nonadjacent positively signed edges $\{u_1,u_2\},\{u_3,u_4\}$, and the distance from the cycle $\mathcal{C}$ to $\mathcal{C}_1=u_5u_6u_7$ is odd. Thus, $\mathcal{P}$ satisfies condition (i) of Theorem~\ref{xx2}, which implies that $\mathcal{P}$ is not consistent.
\end{eg}

\begin{eg}\rm
Let $\mathcal{P}\in \mathcal{Q}_8$ be an irreducible, combinatorially symmetric sign pattern with a $0$-diagonal, whose underlying undirected graph $G$ is  given in Fig.\ref{xnfig6.1},

\begin{figure}[H]
\centering
\tikzset{node distance=1cm}
\begin{tikzpicture}
	\tikzstyle{vertex}=[draw, circle, inner sep=0pt, minimum size=0.15cm, fill=black]
	\tikzstyle{edge}=[thick]
	\node[vertex, label=above:$u_1$](v1) at (0,1.25) {};
	\node[vertex, label=above:$u_2$](v2) at (-2,1.25) {};
	\node[vertex, label=below:$u_3$](v3) at (-2,0) {};
	\node[vertex, label=below:$u_4$](v4) at (0,0) {};
	\node[vertex, label=below:$u_5$](v5) at (1.5,0) {};
	\node[vertex, label=below:$u_6$](v6) at (3.5,0) {};
	\node[vertex, label=above:$u_7$](v7) at (3.5,1.25) {};
	\node[vertex, label=above:$u_8$](v8) at (1.5,1.25) {};
	
	\draw[edge] (v1)--node[above]{$-$} (v2);
	\draw[edge] (v2)--node[left]{$-$} (v3);
	\draw[edge] (v3)--node[below]{$-$} (v4);
	\draw[edge] (v4)--node[right]{$+$} (v1);
	\draw[edge] (v4)--node[below]{$+$} (v5);
	\draw[edge] (v5)--node[below]{$+$} (v6);
	\draw[edge] (v7)--node[right]{$+$} (v6);
	\draw[edge] (v7)--node[above]{$+$} (v8);
	\draw[edge] (v8)--node[left]{$+$} (v5);
\end{tikzpicture}
\caption{The signed undirected graph $G$ of $\mathcal{P}$.}
\label{xnfig6.1}
\end{figure}
Then $G$ has a cycle $\mathcal{C}=u_1u_2u_3u_4$ with more than one maximal signed path of odd length, and the distance from cycle $\mathcal{C}$ to $\mathcal{C}_1=u_5u_6u_7u_8$ is odd. Thus, $\mathcal{P}$ satisfies condition (ii) of Theorem~\ref{xx2}, which implies that $\mathcal{P}$ is not consistent.
\end{eg}

\begin{eg}\rm
Let $\mathcal{P}\in \mathcal{Q}_7$ be an irreducible, combinatorially symmetric sign pattern with a $0$-diagonal, whose underlying undirected graph is $G$ as in Example~\ref{egx1.4}. Then $G$ has a cycle $\mathcal{C}_1=u_5u_6u_7$ of odd length with positively signed edges, and the distance from the cycle $\mathcal{C}_1$ to $\mathcal{C}=u_1u_2u_3u_4$ is odd. Thus, $\mathcal{P}$ satisfies condition (iii) of Theorem~\ref{xx2}, which implies that $\mathcal{P}$ is not consistent.
\end{eg}

\section{
$2$-consistent sign patterns} \label{s5}
In \cite{1993a}, the author characterized the class $\pi$ of all sign patterns that require exactly one real eigenvalue. In this section, we define the class $\Delta$ of all $2$-consistent sign pattern matrices, and we derive certain necessary conditions for sign patterns to be in $\Delta$. As explained earlier, we consider only irreducible sign patterns in $\Delta$.
Before we proceed, we clarify our convention on the sign of a cycle.

\begin{remark}\rm
    Our definition of the sign of a cycle differs from that in \cite{1993a}.  
In \cite{1993a}, the sign of a simple cycle of length $k$ is defined as the product of the sign of the edge, whereas in this paper, we define it as $(-1)^{k-1}$ times this product. Throughout this work, we adopt our definition. Whenever we cite results from~\cite{1993a}, we use the definition of the sign of a cycle as given in \cite{1993a}. 
\end{remark}

The following result is by Eschenbach \cite{1993a},

\begin{lemma}[Lemma 1.3, \cite{1993a}]
	If $\mathcal{A}$ is an $n\times n$ sign pattern matrix in $\pi$, and if $\gamma=a_{i_1i_2}a_{i_2i_3}\cdots a_{i_ki_1}$ is a nonzero odd cycle in $\mathcal{A}$, then $\mathcal{A}[\{i_1i_2\cdots i_k\}^c]$ contains no nonzero odd cycles.
\end{lemma}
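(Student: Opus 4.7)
My plan is to argue by contradiction, producing a matrix $B \in \mathcal{Q}(\mathcal{A})$ with at least two real eigenvalues via the perturbation construction developed in the preliminaries (equations \eqref{xe3} and \eqref{e2}). Assume $\mathcal{A}\in\pi$, that $\gamma$ is a nonzero odd cycle on the vertex set $I=\{i_1,i_2,\dots,i_k\}$ (so $k$ odd), and, for contradiction, that $\mathcal{A}[I^c]$ contains a nonzero odd cycle $\gamma'$ on a vertex set $J\subseteq I^c$ with $|J|=l$ odd. Note that because these two cycles live on disjoint vertex sets, $\Gamma:=\gamma\,\gamma'$ is a composite cycle in the signed directed graph $D$ of $\mathcal{A}$.

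The key observation concerns the real matrix $B_\Gamma(0)$ defined in \eqref{xe3}. Its nonzero spectrum splits into two blocks corresponding to $\gamma$ and $\gamma'$: the block for $\gamma$ contributes the $k$-th roots of $\pm 10^{k}$ (sign depending on $\sgn(\gamma)$ under the convention of \cite{1993a}), and the block for $\gamma'$ contributes the $l$-th roots of $\pm 10^{2l}$. Since $k$ and $l$ are both odd, each block yields exactly one real eigenvalue, namely $\lambda_1\in\{\pm 10\}$ from $\gamma$ and $\lambda_2\in\{\pm 10^2\}$ from $\gamma'$. These are algebraically simple, nonzero, and of distinctly different magnitudes, and the remaining $n-k-l$ eigenvalues of $B_\Gamma(0)$ are zero.

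Now I invoke the perturbation machinery exactly as the paper sets it up: for $\epsilon>0$ sufficiently small, the matrix $B_\Gamma(\epsilon)\in\mathcal{Q}(\mathcal{A})$ has eigenvalues close to those of $B_\Gamma(0)$, the algebraically simple nonzero eigenvalues of $B_\Gamma(0)$ persist as algebraically simple eigenvalues of $B_\Gamma(\epsilon)$, and (as emphasized after \eqref{xe3}) two real eigenvalues of $B_\Gamma(0)$ of the form $\lambda,-\lambda$ cannot coalesce into a nonreal complex conjugate pair in $B_\Gamma(\epsilon)$. Because $|\lambda_1|\approx 10$ and $|\lambda_2|\approx 10^2$ are well separated from each other and from zero, both $\lambda_1$ and $\lambda_2$ give rise to real eigenvalues of $B_\Gamma(\epsilon)$. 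Hence $i_r(B_\Gamma(\epsilon))\ge 2$, contradicting $\mathcal{A}\in\pi$, which requires exactly one real eigenvalue.

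The only genuinely delicate point is step~three: we must be sure that the two specific real eigenvalues $\lambda_1,\lambda_2$ actually persist as real eigenvalues after the $O(\epsilon)$ perturbation, rather than pairing off into complex conjugates. The simplicity argument above rules this out because simple roots of the characteristic polynomial depend continuously on entries, and the only mechanism by which a simple real eigenvalue can become nonreal is coalescence with another real eigenvalue at some intermediate $\epsilon$; the magnitude separation $|\lambda_1|\ll|\lambda_2|$ (together with the fact that the perturbed eigenvalues stay in small neighbourhoods of $\lambda_1$ and $\lambda_2$) prevents this. Once this is secured, the contradiction is immediate.
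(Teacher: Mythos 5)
Your argument is correct, and it is essentially the paper's own technique: the paper states this lemma as a citation from \cite{1993a} without reproducing a proof, but its proof of the analogous result for $\Delta$ (Lemma~\ref{l2}) uses exactly your construction --- form the composite cycle $\Gamma=\gamma\gamma'$, build $B_\Gamma(0)$ as in \eqref{xe3} so that each odd cycle contributes one simple real eigenvalue of a distinct magnitude ($\pm 10$ and $\pm 10^2$), and pass to $B_\Gamma(\epsilon)\in\mathcal{Q}(\mathcal{A})$ for small $\epsilon$, where these simple real eigenvalues persist and contradict membership in $\pi$. Your handling of the persistence step (simple real eigenvalues of a real matrix cannot become nonreal without coalescing, which the magnitude separation forbids) is also sound.
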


We have a similar necessary condition for sign patterns in $\Delta$.
\begin{lemma} \label{l2}
	Suppose that $\mathcal{P}\in \mathcal{Q}_n$ is a sign pattern matrix in $\Delta$, 
and let $\gamma=(i_1,i_2,\dots,i_k)$ be a negative even cycle in the signed directed graph $D$ of $\mathcal{P}$. Then the signed directed graph $D'$ corresponding to $\mathcal{P}[\{i_1 i_2 \cdots i_k\}^c]$ contains neither a negative even cycle nor a nonzero odd cycle.
\end{lemma}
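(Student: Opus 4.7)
The plan is to argue by contradiction using the perturbation construction from the preliminaries. Assume $D'$ contains a cycle $\gamma'$ which is either a negative even cycle or a nonzero odd cycle. Since the vertices of $\gamma'$ lie in $\{i_1,\ldots,i_k\}^c$, the cycles $\gamma$ and $\gamma'$ are vertex-disjoint, so $\Gamma=\gamma\gamma'$ is a valid composite cycle in $D$. I then apply \eqref{xe3} to construct $B_\Gamma(0)$, placing entries of absolute value $10$ on the edges of $\gamma$ (with $p=1$) and $100$ on the edges of $\gamma'$ (with $p=2$), with signs dictated by $\mathcal{P}$.

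Up to a simultaneous row/column permutation, $B_\Gamma(0)$ decomposes into two cycle blocks whose nonzero spectra are disjoint (their moduli are $10$ and $100$, respectively). The $\gamma$-block, being a negative even cycle of length $k$ at scale $10$, contributes the $k$-th roots of $10^k$, which contains the two real values $\pm 10$. For the $\gamma'$-block: if $\gamma'$ is a negative even cycle of length $k'$, it contributes the $k'$-th roots of $100^{k'}$, including both $\pm 100$; if $\gamma'$ is a nonzero odd cycle of length $k'$, it contributes the $k'$-th roots of $\pm 100^{k'}$ (sign depending on $\sgn(\gamma')$), which still yields exactly one real value $\pm 100$. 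In either sub-case $B_\Gamma(0)$ has at least three real nonzero eigenvalues, and all its nonzero eigenvalues are algebraically simple since the two blocks contribute eigenvalues of distinct moduli.

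I would then pass to the perturbation $B_\Gamma(\epsilon)\in\mathcal{Q}(\mathcal{P})$ defined as in \eqref{e2} with $\gamma$ replaced by $\Gamma$. For $\epsilon>0$ sufficiently small, the simple nonzero eigenvalues of $B_\Gamma(0)$ perturb continuously and remain simple, and a simple real eigenvalue of a real matrix cannot leave the real axis without colliding with another eigenvalue. Therefore $i_r(B_\Gamma(\epsilon))\geq 3$, which contradicts $\mathcal{P}\in\Delta$. The main care needed is bookkeeping the sign conventions of \eqref{e1}--\eqref{xe3} to verify that a negative even cycle yields $\pm 10^p$ as real eigenvalues while a nonzero odd cycle yields exactly one such real value; beyond that, the argument is a direct application of the perturbation machinery already in place.
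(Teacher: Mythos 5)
Your proposal is correct and follows essentially the same route as the paper: form the vertex-disjoint composite cycle $\Gamma=\gamma\gamma'$, build $B_\Gamma(0)$ via \eqref{xe3} with scales $10$ and $10^2$, observe that the negative even cycle $\gamma$ contributes $\pm 10$ and $\gamma'$ contributes at least one further real eigenvalue ($\pm 10^2$, or both if it is negative even), and then perturb via \eqref{e2} to get a matrix in $\mathcal{Q}(\mathcal{P})$ with at least three real eigenvalues, contradicting $\mathcal{P}\in\Delta$. Your sign bookkeeping and the simplicity/continuity argument for why the real eigenvalues persist match the paper's reasoning.
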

\begin{proof}
	Assume that
$
\alpha=(i_{k+1},i_{k+2},\dots,i_{k+j})
$
is either a negative even cycle or a nonzero odd cycle in $D'$.
	Consider the composite cycle $\Gamma=\gamma\alpha$, and define $B_{\Gamma}(0)=(b_{\Gamma}(0)_{ij})$ as in \eqref{xe3}. 
	Then $B_\Gamma(0)$ has $k$ nonzero algebraic simple eigenvalues equal to the $k$-th complex root of $10^{k}$, and $j$ nonzero algebraic simple eigenvalues equal to the $j$-th complex root of $10^{2j}$ (respectively, $10^{2j}$ or $-10^{2j}$) if $j$ is even (respectively, odd). Take $B_{\Gamma}(\epsilon)=(b_{\Gamma}(\epsilon)_{ij})$ as in \eqref{e2}. Then $B_{\Gamma}(\epsilon)\in \mathcal{Q}(\mathcal{P})$ and for $\epsilon > 0$ sufficiently small, it has at least three real eigenvalues close to $10$, $-10$, $10^2$ and $-10^2$ depending on $j$ and sign of $\alpha$, which contradicts the assumption that $\mathcal{P}\in \Delta$. 
\end{proof}

	

\begin{lemma} \label{l3}
	Suppose that $\mathcal{P}\in  \mathcal{Q}_n$ is a sign pattern matrix in $\Delta$, then the signed directed graph $D$ of $\mathcal{P}$ contains no nonzero composite cycle of length $n$ consisting only of positive even cycles.
\end{lemma}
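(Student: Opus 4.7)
The plan is to argue by contradiction by constructing a matrix in $\mathcal{Q}(\mathcal{P})$ with no real eigenvalues. Suppose $\mathcal{P}\in\Delta$ but that the signed directed graph $D$ of $\mathcal{P}$ contains a composite cycle $\Gamma=\gamma_1\gamma_2\cdots\gamma_t$ of length $n$ in which every $\gamma_p$ is positive and of even length $k_p:=l(\gamma_p)$. I would first form the matrix $B_\Gamma(0)$ via \eqref{xe3}. Because $\sum_p k_p=n$ exhausts the vertex set, $B_\Gamma(0)$ is, up to permutation similarity, block diagonal with one block per $\gamma_p$; the block associated with $\gamma_p$ has characteristic polynomial $x^{k_p}-P_p$, where $P_p$ denotes the product of the entries of $B_\Gamma(0)$ along $\gamma_p$.

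The crux of the proof is to show that each such block contributes only nonreal eigenvalues. From the sign convention $\sgn(\gamma_p)=(-1)^{k_p-1}\prod p_{ij}$, positivity of $\gamma_p$ together with $k_p$ even forces the product of the signs of its edges to equal $-1$, whence $P_p=(10^p)^{k_p}\cdot(-1)=-10^{pk_p}<0$. For $k_p$ even, the equation $x^{k_p}=-10^{pk_p}$ has no real solutions, so the block contributes $k_p$ nonreal eigenvalues occurring in conjugate pairs bounded away from the real axis. Summing over $p$ yields that all $n$ eigenvalues of $B_\Gamma(0)$ are nonreal.

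To conclude, I would take $\epsilon>0$ sufficiently small and consider the perturbation $B_\Gamma(\epsilon)\in\mathcal{Q}(\mathcal{P})$ defined as in \eqref{e2} with $\gamma$ replaced by $\Gamma$. By continuity of the spectrum, each eigenvalue of $B_\Gamma(\epsilon)$ lies in a small neighborhood of a nonreal eigenvalue of $B_\Gamma(0)$, and since those neighborhoods are bounded away from the real axis, every eigenvalue of $B_\Gamma(\epsilon)$ remains nonreal. Hence $i_r(B_\Gamma(\epsilon))=0\neq 2$, contradicting $\mathcal{P}\in\Delta$. The only delicate point is the sign computation establishing $P_p<0$; once that is in hand, the perturbation argument follows the template already developed in the preliminaries around \eqref{e1}--\eqref{xe3}, so no genuine obstacle remains.
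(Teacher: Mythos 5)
Your proposal is correct and follows essentially the same route as the paper: build $B_\Gamma(0)$ as in \eqref{xe3}, observe that positivity of each even cycle forces every block's spectrum to consist of nonreal roots of $x^{k_p}=-10^{pk_p}$, then perturb to $B_\Gamma(\epsilon)\in\mathcal{Q}(\mathcal{P})$ so that all $n$ eigenvalues stay nonreal, contradicting $\mathcal{P}\in\Delta$. The only difference is that you spell out the sign computation explicitly, which the paper leaves to the conventions established in the preliminaries.
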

\begin{proof}
	Suppose that $D$ containing the nonzero composite cycle $\Gamma=\gamma_1 \gamma_2 \cdots \gamma_k$ of length $n$, where $\gamma_p$ for $p=1,2,...,k$, is a positive cycles of even length. Define the real matrix $B_{\Gamma}(0)=(b_{\Gamma}(0)_{ij})$ as in \eqref{xe3}.
	Also, for $\epsilon>0$, consider $B_\Gamma(\epsilon)=(b_\Gamma(\epsilon)_{ij})$ as in \eqref{e2} by replacing $\gamma$ with $\Gamma$, then $B_\Gamma(\epsilon)\in \mathcal{Q}(\mathcal{P})$. Since $\Gamma$ contains only positive even cycles, $B_\Gamma(0)$ has $n$ algebraically simple nonreal eigenvalues. Thus, the perturbed matrix $B_\Gamma(\epsilon)$ has $n$ nonreal eigenvalues close to the $n$ distinct eigenvalues of $B_\Gamma(0)$, which contradicts that $\mathcal{P}\in \Delta$. \end{proof}

    In addition to the above conditions, which are useful for identifying sign patterns which are not in $\Delta$, we further give useful necessary conditions of $2$-consistent sign patterns in $\Delta$, specific to sign singular, sign nonsingular, and sign patterns allowing singularity, respectively.

The following necessary and sufficient conditions for a sign singular sign pattern to be in $\pi$ were established by Eschenbach~\cite{1993a}.

\begin{theorem}[Theorem 1.4, \cite{1993a}]
	If $\mathcal{A}$ is an $n\times n$ sign singular matrix, then $\mathcal{A}\in \pi$ if and only if \begin{itemize}
		\item[1.] there are no nonzero odd cycles in $\mathcal{A}$;
		\item[2.] all even cycles in $\mathcal{A}$ are nonpositive; and \item[3.] among $p\times q$ $0$-submatrices of $\mathcal{A}$, $\max(p+q)\leq n+1$.
	\end{itemize}
\end{theorem}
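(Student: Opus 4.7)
The plan is to prove both directions separately, paralleling the perturbation-matrix techniques of Lemmas~\ref{l2} and~\ref{l3} for the necessity, and combining a characteristic-polynomial analysis with the Frobenius--König theorem for sufficiency.

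\textbf{Necessity.} Assuming $\mathcal{A}\in\pi$ is sign singular, I would contradict each of (1)--(3) in turn. For~(1), given a nonzero odd cycle $\gamma$ of length $k$, the perturbed matrix $B_\gamma(\epsilon)\in\mathcal{Q}(\mathcal{A})$ defined via~\eqref{e1}--\eqref{e2} keeps $0$ as an eigenvalue (by sign singularity) and, for $\epsilon>0$ sufficiently small, acquires at least one nonzero real eigenvalue, since for odd $k$ the $k$-th roots of $\pm 1$ include a real one. This gives two real eigenvalues, contradicting $\mathcal{A}\in\pi$. For~(2), a positive even cycle of length $k$ places both $+1$ and $-1$ among the $k$-th roots of $+1$, so $B_\gamma(\epsilon)$ has at least two nonzero real eigenvalues for small $\epsilon$; together with $0$, that is three real eigenvalues. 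For~(3), a $p\times q$ zero submatrix with $p+q\ge n+2$ can be permuted to the upper right, after which every $A\in\mathcal{Q}(\mathcal{A})$ satisfies $\rank(A)\le 2n-(p+q)\le n-2$, so $0$ has algebraic multiplicity at least $2$ as an eigenvalue of $A$.

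\textbf{Sufficiency.} Conversely, assume conditions (1)--(3). For any $A\in\mathcal{Q}(\mathcal{A})$ write
\[
\Ch_A(x)=x^n-E_1(A)x^{n-1}+E_2(A)x^{n-2}-\cdots+(-1)^nE_n(A).
\]
Condition~(1) forces $E_k(A)=0$ for every odd $k$, since any composite cycle of odd total length must contain a simple odd cycle. Combined with the identity $\sgn(\gamma)=(-1)^{l(\gamma)-1}\prod_{e\in\gamma}e$, condition~(2) shows that each nonzero composite cycle of even length decomposes into simple even factors whose edge-products are negative (by (1) each factor is even, and (2) makes its edge-product nonpositive, hence negative when nonvanishing), each contributing a positive $\sgn(\gamma_i)$. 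Therefore $E_k(A)\ge 0$ for every even $k$, while sign singularity gives $E_n(A)=0$. So $\Ch_A(x)$ is, up to a leading factor of $x$ (if $n$ is odd) or $x^2$ (if $n$ is even), a polynomial in $x^2$ with nonnegative coefficients. Descartes' rule of signs, applied separately for $x>0$ and $x<0$, rules out both positive and negative real roots, leaving $0$ as the only candidate real eigenvalue.

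The main obstacle is to pin down the multiplicity of $0$ as an eigenvalue so as to match the requirement of $\pi$. Here condition~(3) enters essentially: by the Frobenius--König theorem, sign singularity is equivalent to the existence of a zero submatrix with $p+q\ge n+1$, so (3) forces the extremal value $\max(p+q)=n+1$. A careful case analysis on the location of this extremal zero submatrix should show that some $(n-1)$-principal submatrix of $\mathcal{A}$ is not sign singular and hence carries a nonzero composite cycle of length $n-1$; by the positivity analysis above, this guarantees that the lowest nontrivial coefficient of $\Ch_A(x)$ is strictly positive for every $A\in\mathcal{Q}(\mathcal{A})$. Consequently the residual polynomial in $x^2$ is strictly positive on $[0,\infty)$, $0$ is a root of $\Ch_A$ of minimal multiplicity, and $A$ has exactly one real eigenvalue, placing $\mathcal{A}\in\pi$. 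This final passage from the combinatorial bound~(3) to the positivity of the critical coefficient is, in my view, the subtlest piece of the argument.
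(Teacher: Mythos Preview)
This theorem is not proved in the present paper; it is quoted verbatim from Eschenbach~\cite{1993a} as background for the analogous result on the class $\Delta$ (Theorem~\ref{4.7l}), so there is no in-paper proof to compare against.

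That said, your necessity arguments are sound and closely mirror the perturbation techniques the paper itself uses in Theorem~\ref{4.7l}. The sufficiency direction, however, has a genuine gap at precisely the point you flag as ``the subtlest piece.'' The claim that $\max(p+q)=n+1$ forces some principal $(n-1)\times(n-1)$ submatrix to carry a nonzero composite cycle of length $n-1$ is false in the generality stated. Consider
\[
\mathcal{A}=\begin{bmatrix}0&+&0\\0&0&+\\0&0&0\end{bmatrix}.
\]
This pattern is sign singular, has no cycles whatsoever (so (1) and (2) hold vacuously), and one checks directly that every zero submatrix has $p+q\le 4=n+1$, so (3) holds. Yet every $A\in\mathcal{Q}(\mathcal{A})$ is nilpotent with $\Ch_A(x)=x^3$, giving three real eigenvalues, so $\mathcal{A}\notin\pi$. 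The Frobenius--K\"onig bound you invoke guarantees a partial transversal of size $n-1$, but not one supported on a \emph{principal} submatrix; your hoped-for ``careful case analysis'' cannot bridge this gap without an extra hypothesis. The original statement in~\cite{1993a} presumably carries an additional assumption (irreducibility, or at least an explicit restriction to odd $n$ as in the sign-nonsingular case, Theorem~\ref{th4.5}) that has been dropped in the restatement here; your proof sketch does not recover it.
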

The following are similar necessary conditions for a sign singular sign pattern to be in $\Delta$.
\begin{theorem} \label{4.7l}
	Let $\mathcal{P}\in \mathcal{Q}_n$ be a sign singular matrix with underlying signed directed graph $D$. If $\mathcal{P}\in \Delta$, then the following conditions hold.
   \begin{itemize}
		\item[1.] All even cycles in $D$ are nonnegative.
		\item[2.] Among $p\times q$ $0$-submatrices of $\mathcal{P}$, $\max(p+q)\leq n+2$.
		\item[3.] If $\gamma=(i_1,i_2,...,i_k)$ is a nonzero odd cycle in $D$, then the signed directed graph corresponding to $\mathcal{P}[\{i_1i_2\cdots i_k\}^c]$ does not have a nonzero odd cycle.
		\item[4.] If there is no composite cycle of length $n-1$ in $D$, then all cycles in $D$ are even and nonnegative. Furthermore, there is a composite cycle in $D$ of length $n-2$.
		\item[5.] If $D$ contains a composite cycle of length $n-1$, and all such composite cycles of length $n-1$ in $D$ have the same sign, then every odd cycle in $D$ must have the same sign.
\item[6.] If there are two composite cycles of length $n-1$ in $D$ having opposite sign, then there is a composite cycle of length $n-2$ in $D$. 
	\end{itemize}
\end{theorem}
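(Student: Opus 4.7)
The plan is to handle each of the six conditions via two uniform tools: the perturbation matrix $B_\Gamma(\epsilon)$ from \eqref{xe3} and \eqref{e2}, which deposits prescribed nonzero eigenvalues near the $l(\gamma_p)$-th roots of $\pm 10^{p\,l(\gamma_p)}$ while keeping the remaining eigenvalues near $0$; and the coefficient-level analysis of
\[
\Ch_A(x)=x^n-E_1(A)x^{n-1}+\cdots+(-1)^{n-1}E_{n-1}(A)x+(-1)^n E_n(A).
\]
Sign singularity gives $E_n(A)=0$ identically, and $\mathcal{P}\in\Delta$ gives $i_r(A)=2$ for every $A\in\mathcal{Q}(\mathcal{P})$. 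Each claim will be proved by exhibiting, under its failure, some $A\in\mathcal{Q}(\mathcal{P})$ with $i_r(A)\ge 3$.

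For (1)--(3) I would apply the perturbation tool. A negative even cycle $\gamma$ of length $k$ makes the cycle subsystem of $B_\gamma(0)$ have characteristic polynomial $x^k-1$, so $B_\gamma(\epsilon)$ carries two nonzero real eigenvalues near $\pm 1$; together with the forced zero from sign singularity this gives three reals, proving (1). For (2), a $p\times q$ zero submatrix with $p+q\ge n+3$ forces $\rk(A)\le 2n-p-q\le n-3$, so the multiplicity of $0$ as an eigenvalue of any $A\in\mathcal{Q}(\mathcal{P})$ is at least three. For (3), the composite cycle $\Gamma=\gamma\alpha$ built from two vertex-disjoint nonzero odd cycles gives $B_\Gamma(\epsilon)$ one real eigenvalue near $\pm 1$ per odd cycle, plus the forced zero.

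For (4) and (6) I would combine both tools. Absence of a composite cycle of length $n-1$ makes $E_{n-1}(A)=0$ identically, so $x^2\mid \Ch_A(x)$ and $0$ has multiplicity at least two; any odd cycle in $D$ would then let $B_\gamma(\epsilon)$ supply a third real eigenvalue near $\pm 1$, and (1) already excludes negative even cycles, so only positive even cycles can survive in $D$. The second half of (4) and all of (6) follow by a single device: if no composite cycle of length $n-2$ exists then $E_{n-2}(A)=0$ identically, and combined with either the identical vanishing of $E_{n-1}$ (case (4)) or the vanishing of $E_{n-1}(A)$ at a specifically chosen $A$ (case (6), where the opposite-sign length-$(n-1)$ composite cycles permit weights with $E_{n-1}(A)=0$), one gets $x^3\mid\Ch_A(x)$ and hence $i_r(A)\ge 3$.

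The most delicate item is (5), and I expect the real work to lie there. Under its hypothesis $E_{n-1}(A)$ has a fixed nonzero sign on $\mathcal{Q}(\mathcal{P})$, so $0$ is a simple eigenvalue of every $A\in\mathcal{Q}(\mathcal{P})$; the remaining $n-1$ nonzero eigenvalues then consist of a single real value $\lambda$ and $(n-2)/2$ complex-conjugate pairs, forcing $n$ to be even, and the product identity
\[
\lambda\cdot\prod_{\text{conjugate pairs}}|z|^2=E_{n-1}(A)
\]
yields $\sgn(\lambda)=\sgn(E_{n-1}(A))$, a constant across $\mathcal{Q}(\mathcal{P})$. Specializing to $A=B_\gamma(\epsilon)$ for any odd cycle $\gamma$ in $D$, a direct computation on the cycle subsystem of $B_\gamma(0)$ shows its unique nonzero real eigenvalue has sign $\sgn(\gamma)$; hence $\sgn(\gamma)$ equals this universal constant for every odd cycle, so all odd cycles share one sign. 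The main technical hurdle is ensuring that the $n-k$ near-zero eigenvalues of $B_\gamma(\epsilon)$ contribute no additional nonzero real eigenvalue, which follows from $\mathcal{P}\in\Delta$ together with the algebraic simplicity of the nonzero eigenvalues of $B_\gamma(0)$.
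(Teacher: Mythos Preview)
Your proposal is correct and follows essentially the same approach as the paper: perturbation matrices $B_\gamma(\epsilon)$, $B_\Gamma(\epsilon)$ to manufacture extra real eigenvalues on top of the forced zero for parts (1), (3), (4); the coefficient/multiplicity-of-zero analysis for parts (2), (4), (6); and for (5) the product identity $\lambda\prod|z_i|^2=E_{n-1}(A)$ linking the sign of the unique nonzero real eigenvalue to the fixed sign of $E_{n-1}$, specialized to $A=B_\gamma(\epsilon)$. The only cosmetic difference is in (2), where the paper cites a result of Eschenbach--Johnson for the multiplicity bound while you supply the underlying rank estimate $\rk(A)\le (n-p)+(n-q)$ directly; and in (6), where the paper makes the continuity argument explicit via the one-parameter family $|b_{ij}(t)|=t$ on $\Gamma_1\setminus\Gamma_2$, $1/t$ on $\Gamma_2\setminus\Gamma_1$, which you should record when writing this up.
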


\begin{proof}
	Since $\mathcal{P}=[p_{ij}]$ is sign singular, every $B\in \mathcal{Q}(\mathcal{P})$ has an eigenvalue equal to $0$. Consider $$\Ch_B(x)=x(x^{n-1}+E_1(B)x^{n-2}+\cdots+E_{n-1}(B)).$$
	
	\begin{itemize}
		\item[1.]  Assume that $\mathcal{P}$ has a negative even cycle $\gamma$. Define the real matrix $B_{\gamma}(0)=(b_{\gamma}(0)_{ij})$ as in \eqref{e1} and the perturbed matrix $B_{\gamma}(\epsilon)=(b_{\gamma}(\epsilon)_{ij})$ as in \eqref{e2} for some $\epsilon>0$. Then $B_{\gamma}(\epsilon)\in \mathcal{Q}(\mathcal{P})$  and it has real eigenvalues close to $1$ and $-1$. Also, $B_{\gamma}(\epsilon)$ is a singular matrix, so $0$ is an eigenvalue of $B_{\gamma}(\epsilon)$ which contradicts that $\mathcal{P}\in \Delta$.
		\item[2.] Suppose $\mathcal{P}\in \Delta$ contains a $p\times q$ zero submatrix with $p+q\geq n+3$. Then, by Theorem~3 in \cite{1993}, $\mathcal{P}$ requires the eigenvalue $0$ with algebraic multiplicity $k\geq 3$, which contradicts the assumption that $\mathcal{P}\in \Delta$.
		\item[3.]  Assume that $\alpha=(i_{k+1}, i_{k+2},..., i_{k+j})$ is an odd cycle in the underlying directed graph corresponding to $\mathcal{P}[\{i_1i_2\cdots i_k\}^c]$. Consider  $\Gamma=
        \gamma\alpha$ and define $B_{\Gamma}(0)=(b_{\Gamma}(0)_{ij})$ as in \eqref{xe3}.
		Then $B_\Gamma(0)$ has a real eigenvalue equal to $10$ or $-10$ depending on the sign of $\gamma$ and has a real eigenvalue equal to $10^2$ or $-10^2$ depending on the sign of $\alpha$. For some $\epsilon>0$, define $B_{\Gamma}(\epsilon)=(b_{\Gamma}(\epsilon)_{ij})$ as in \eqref{e2} with $\gamma$ replaced by $\Gamma$. Then $B_{\Gamma}(\epsilon)\in \mathcal{Q}(\mathcal{P})$ and for $\epsilon > 0$ sufficiently small, it has real eigenvalues close to $10$ and $10^2$ in absolute value. Also, $B_{\Gamma}(\epsilon)$ is a singular matrix, so $0$ is an eigenvalue of $B_{\Gamma}(\epsilon)$, which contradicts that $\mathcal{P}\in \Delta$. 
		
		\item[4.] If $D$ has no nonzero composite cycle of length $n-1$, then the characteristic polynomial of any $B\in \mathcal{Q}(\mathcal{P})$ is $$\Ch_B(x)=x^2(x^{n-2}+E_1(B)x^{n-3}+\cdots+E_{n-2}(B)).$$ Therefore, $0$ is an eigenvalue of $B$ with algebraic multiplicity $2$, for all $B\in \mathcal{Q}(\mathcal{P})$. Suppose $D$ contains an odd cycle or a negative even cycle $\gamma$. Then define the real matrix $B_\gamma(\epsilon)$ as in \eqref{e2} which lies in $\mathcal{Q}(\mathcal{P})$ and for $\epsilon > 0$ sufficiently small, it has at least one nonzero real eigenvalue, which contradicts that $\mathcal{P}\in \Delta$. 
        
		Furthermore, if $D$ has no composite cycle of length $n-1$ and $n-2$, then $0$ is an eigenvalue of $B$ with algebraic multiplicity greater than equal to $3$ for all $B\in \mathcal{Q}(\mathcal{P})$, which contradicts that $\mathcal{P}\in \Delta$.
		
		\item[5.] 
        Suppose that $D$ contains two oppositely signed odd cycles $\gamma_1$ and $\gamma_2$ of length $k_1$ and $k_2$, respectively. Without loss of generality, let $\gamma_1$ be positive and $\gamma_2$ be negative. Define $B_{\gamma_1}(0)=[b_{\gamma_1}(0)_{ij}]$ as in \eqref{e1}, with $\gamma_1$ replacing $\gamma$. Then for $\epsilon > 0$ sufficiently small, the perturbed matrix $B_{\gamma_1}(\epsilon)=[b_{\gamma_1}(\epsilon)_{ij}]$, as defined in \eqref{e2} is in $\mathcal{Q}(\mathcal{P})$ and has real eigenvalues $0$ and $\lambda$, where $\lambda$ is a real number close to $1$. Since $\mathcal{P}\in \Delta$, all other eigenvalues of $B_{\gamma_1}(\epsilon)$ are nonreal and occur in conjugate pairs $\alpha_i$, $\bar{\alpha}_i$ for $i=1,2,...,\frac{n-2}{2}$. Hence, $$E_{n-1}(B_{\gamma_1}(\epsilon))=\lambda   \alpha_1 \bar{\alpha}_1\cdots \alpha_{\frac{n-2}{2}}  \bar{\alpha}_{\frac{n-2}{2}}>0.$$ 
		Similarly, let $B_{\gamma_2}(0)$ be a real matrix as defined in \eqref{e1} with $\gamma_2$ replacing $\gamma$. Then for $\epsilon > 0$ sufficiently small,  the perturbed matrix $B_{\gamma_2}(\epsilon)$, as defined in \eqref{e2} is in $\mathcal{Q}(\mathcal{P})$ and has real eigenvalues $0$ and $\lambda$, where $\lambda$ is a real number close to $-1$. Hence, $$E_{n-1}(B_{\gamma_2}(\epsilon))=\lambda  \alpha_1 \bar{\alpha}_1\cdots \alpha_{\frac{n-2}{2}}  \bar{\alpha}_{\frac{n-2}{2}}<0,$$ 
		which contradicts the assumption.

        \item[6.] Let $\Gamma_1$ and $\Gamma_2$ be two oppositely signed composite cycles in $D$ of length $n-1$. 
For $\epsilon, t>0$, define a real matrix $B(t)=[b_{ij}(t)]\in \mathcal{Q}(\mathcal{P})$ by  
\[
|b_{ij}(t)| =
\begin{cases}
	t & \text{if } p_{ij}\ \text{lies in } \Gamma_1 \setminus \Gamma_2 \\
	\frac{1}{t} & \text{if } p_{ij}\ \text{lies in } \Gamma_2 \setminus \Gamma_1\\
    1 & \text{if } p_{ij}\ \text{lies in both } \Gamma_1 \text{ and } \Gamma_2 \\
	\epsilon & \text{if } p_{ij}\neq 0 \text{ and lies in neither } \Gamma_1 \text{ nor } \Gamma_2 \\
	0 & \text{otherwise.}
\end{cases}
\]

For $\epsilon>0$ sufficiently small, there exist $t_1,t_2>0$ such that $
E_{n-1}(B(t_1)), E_{n-1}(B(t_2))$ have opposite sign. 
Since $E_{n-1}(B(t))$ is a continuous function of $t$, there exists $t_3>0$ such that $E_{n-1}(B(t_3))=0$.

If $D$ has no composite cycle of length $n-2$, then $0$ is an eigenvalue of $B(t_3)$ with algebraic multiplicity at least $3$, which contradicts the assumption that $\mathcal{P}\in \Delta$.      
	\end{itemize}
\end{proof}

However, the converse of Theorem \ref{4.7l} is not true, that is, the stated conditions are not sufficient for a sign pattern to be in $\Delta$.
\begin{eg} \rm
 Let $\mathcal{P}\in \mathcal{Q}_n$ be a sign pattern, whose underlying signed directed graph is $D$  given in Fig.\ref{figx5.6}.
		\begin{figure}[H]
\centering
\begin{minipage}{.45\textwidth}
\vspace{-1cm}
$$
\mathcal{P}=\begin{bmatrix}
		0&0&0&-\\+&-&+&0\\0&-&0&-\\0&+&0&0
	\end{bmatrix}
$$
\end{minipage}
\hspace{0.05\textwidth}
\begin{minipage}{.45\textwidth}

\centering
\usetikzlibrary{decorations.markings}

\tikzset{
  vertex/.style={
    draw, circle, fill=black,
    inner sep=0pt, minimum size=0.15cm
  },
  edge/.style={
    thick,
    postaction={
      decorate,
      decoration={
        markings,
        mark=at position 0.75 with {\arrow{stealth}}
      }
    }
  },
  loopedge/.style={thick, ->}
}

\begin{tikzpicture}[scale=1, every node/.style={font=\small}]

\node[vertex,label=above:$1$] (v1) at (0,1.5) {};
\node[vertex,label=above:$2$] (v2) at (1.5,0) {};
\node[vertex,label=below:$3$] (v3) at (0,-1.5) {};
\node[vertex,label=left:$4$]  (v4) at (-1.5,0) {};

\draw[edge] (v2) -- node[right] {$+$} (v1);
\draw[edge] (v2) -- node[above] {$+$} (v3);
\draw[edge] (v3) -- node[below] {$-$} (v4);
\draw[edge] (v1) -- node[above] {$-$} (v4);
\draw[edge] (v4) -- node[below] {$+$} (v2);

\draw[edge, bend right=25] (v3) to node[right] {$-$} (v2);

\draw[loopedge] (v2) edge[loop right] node {$-$} (v2);

\end{tikzpicture}
\caption{The signed directed graph $D$.}
\label{figx5.6}

\end{minipage}
\end{figure}
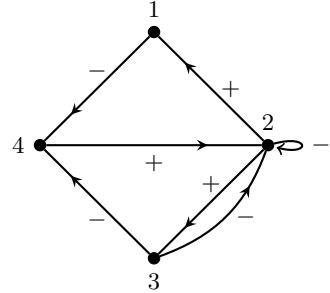 Clearly $\mathcal{P}$ is sign singular.
 Also, $D$ does not contain any negative even cycle or vertex disjoint odd cycles. Also, $D$ contains at least one cycle of length $3$, and all the odd cycles, namely $(1,4,2),(2,3,4),(2,2)$ have the same sign. Moreover, among all zero submatrices of $\mathcal{P}$ of order $p \times q$, we have $\max(p+q) = 6$.
 Therefore, $\mathcal{P}$ satisfies all the conditions stated in Theorem \ref{4.7l}. However
	$$B=\begin{bmatrix}
		0&0&0&-1\\3&-6&1&0\\0&-11&0&-3\\0&1&0&0
	\end{bmatrix}\in \mathcal{Q}(\mathcal{P})$$ has eigenvalues $0,-1,-2,-3$, hence $\mathcal{P}\notin \Delta$. 
\end{eg}

Eschenbach~\cite{1993a} established the following result for sign patterns that allow singularity.
\begin{theorem}[Theorem 1.5, \cite{1993a}] \label{l4.2}
	If $\mathcal{A}$ is an $n\times n$ sign pattern matrix that allows singularity, then $\mathcal{A}\in \pi$ if and only if \begin{itemize}
		\item[1.] all odd cycles of length $l<n$ in $\mathcal{A}$ are zero;
		\item[2.] $\mathcal{A}$ contains oppositely signed $n$-cycles;
		\item[3.] $\mathcal{A}$ contains a principal matching of size $n-1$; and 
		\item[4.] all even cycles in $\mathcal{A}$ are nonpositive.
	\end{itemize}
\end{theorem}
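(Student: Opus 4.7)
The plan is to prove the iff by establishing necessity and sufficiency separately. First I would observe that since nonreal eigenvalues of a real matrix come in conjugate pairs, $\mathcal{A}\in\pi$ forces the real-eigenvalue count of every $B\in\mathcal{Q}(\mathcal{A})$ to be $1$, which requires $n$ to be odd, so I may assume $n$ odd throughout. The main tools will be Descartes' rule of signs on $\Ch_B(x)$ for sufficiency, and the perturbation matrices $B_\gamma(\epsilon),B_\Gamma(\epsilon)$ of equations \eqref{e1}--\eqref{xe3} for necessity.

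For necessity I would argue the contrapositive of each of (1)--(4) by producing $B\in\mathcal{Q}(\mathcal{A})$ with two or more real eigenvalues. For (1), starting from a nonzero odd cycle $\gamma$ of length $l<n$, the matrix $B_\gamma(\epsilon)$ has a real eigenvalue near $\pm 1$; I would then adjust the off-cycle perturbation entries (possible because $\mathcal{A}$ allows singularity) so that $\det B=0$, giving $0$ as a second real eigenvalue. For (4), a positive even cycle $\gamma$ directly produces a $B_\gamma(\epsilon)$ with two real eigenvalues near $\pm 1$ from the real $k$-th roots of a positive number. For (3), absence of a principal matching of size $n-1$ would force $E_{n-1}(B)=0$ identically, so any singular $B\in\mathcal{Q}(\mathcal{A})$ has $0$ as a root of algebraic multiplicity at least two. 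For (2), absence of oppositely signed $n$-cycles would make $\mathcal{A}$ either sign nonsingular (contradicting ``allows singularity'') or sign singular, the latter case being covered by Theorem~1.4 of \cite{1993a}.

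For sufficiency I would analyse $\Ch_B(x)$ directly using (1)--(4). Condition (1) forces $E_k(B)=0$ for every odd $k<n$, because any composite cycle of odd length must contain an odd simple factor, which vanishes by (1); condition (4) forces $E_k(B)$ for every even $k<n$ to have a sign determined entirely by the nonpositivity of even cycles and their composites with loops and $2$-cycles. Conditions (2) and (3) then ensure that $E_n(B)$ and $E_{n-1}(B)$ are realizable with both signs and with zero as $B$ varies over $\mathcal{Q}(\mathcal{A})$. Applying Descartes' rule of signs to $\Ch_B(x)$ for $x>0$ and $x<0$ should yield $V_+(x)+V_-(x)\le 1$, and the parity forced by $n$ odd would then pin this at exactly one real root, giving $\mathcal{A}\in\pi$.

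The main obstacle, I expect, will be the sign bookkeeping in the sufficiency step: precisely identifying which $E_k(B)$ vanish, which are forced to have a definite sign, and how the sign pattern of $\Ch_B(x)$ changes across the boundary $\det B=0$, where Descartes' rule must be re-applied to $\Ch_B(x)/x$. A case split on whether $\det B$ is positive, negative, or zero for the particular $B$ under consideration is likely unavoidable, and extracting a uniform conclusion across these cases is where I expect the delicacy of the argument to lie.
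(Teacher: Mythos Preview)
This theorem is not proved in the paper; it is quoted verbatim from Eschenbach~\cite{1993a} as background for the analogous result on $\Delta$ (Theorem~\ref{4.8l}), and no proof is supplied here. Consequently there is no ``paper's own proof'' to compare your proposal against.

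That said, a few remarks on your sketch. The necessity arguments for (3) and (4) are sound and match the style used elsewhere in the paper. Your argument for (1) has a gap: you assert that because $\mathcal{A}$ allows singularity you can adjust the off-cycle entries of $B_\gamma(\epsilon)$ to force $\det B=0$ while retaining the real eigenvalue near $\pm 1$, but ``allows singularity'' only guarantees the existence of \emph{some} singular matrix in $\mathcal{Q}(\mathcal{A})$, not that singularity can be achieved by perturbing this particular $B_\gamma(\epsilon)$ within the qualitative class. You would need an intermediate-value or continuity argument on $\det$ along a path in $\mathcal{Q}(\mathcal{A})$ connecting $B_\gamma(\epsilon)$ to a known singular matrix, together with control on the real eigenvalue along that path. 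For (2), your dichotomy ``sign nonsingular or sign singular'' is correct once one observes that absence of oppositely signed composite $n$-cycles forces $\det$ to have constant sign on $\mathcal{Q}(\mathcal{A})$; the sign-singular branch then reduces to Theorem~1.4 of \cite{1993a}, but you should note that the hypothesis ``allows singularity'' in \cite{1993a} is meant to exclude the sign-singular case (compare the parenthetical in Theorem~\ref{4.8l}).

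For sufficiency, your plan via Descartes' rule is the natural one, but the sign bookkeeping must be done under the \cite{1993a} convention (product of entries, without the $(-1)^{k-1}$ factor), as the paper's Remark preceding Lemma~\ref{l2} warns. Under that convention, condition~(4) makes each even simple cycle contribute nonpositively, and since all odd simple cycles of length $<n$ vanish by~(1), every $E_k(B)$ for even $k<n$ is a signed sum of products of nonpositive terms; working out that the resulting coefficient sign is fixed (and what it is) is exactly the delicacy you anticipate.
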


The following theorem gives some necessary conditions for sign patterns that allow singularity to be in $\Delta$.

\begin{theorem} \label{4.8l}
	If $\mathcal{P}\in \mathcal{Q}_n$ is a sign pattern matrix that allows singularity (but is not sign singular), whose underlying signed directed graph is $D$. If $\mathcal{P}\in \Delta$, then the following conditions hold.
    \begin{itemize}
		\item[1.] 
        $D$ contains oppositely signed composite cycles of length $n$.
		\item[2.] If there is no composite cycle in $D$ of length $n-1$, then there is a composite cycle in $D$ of length $n-2$.
        \item[3.] $D$ contains either a negative even cycle or two vertex disjoint odd cycles of opposite sign.
       
		\item[4.] $D$ contains two nonzero odd cycles of the same sign, which are vertex disjoint.
		 
	\end{itemize} 
\end{theorem}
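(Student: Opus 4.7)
The plan for conditions (1) and (2) is to exploit elementary properties of the coefficients $E_n, E_{n-1}, E_{n-2}$ of $\Ch_B(x)$. For (1), since $\mathcal{P}$ allows singularity there exists some $B\in\mathcal{Q}(\mathcal{P})$ with $E_n(B)=\det(B)=0$, while sign non-singularity forces $E_n$ to be not identically zero on $\mathcal{Q}(\mathcal{P})$. If every composite $n$-cycle in $D$ carried the same sign, then $E_n(B)$ would be a sum of equally-signed terms with strictly positive magnitudes, which can vanish only when every composite $n$-cycle is structurally absent, contradicting sign non-singularity. For (2), I would argue by contradiction: if $D$ had no composite cycle of length $n-1$ or $n-2$, then $E_{n-1}\equiv E_{n-2}\equiv 0$ on $\mathcal{Q}(\mathcal{P})$, and for any $B$ with $\det(B)=0$ the characteristic polynomial factors as $\Ch_B(x)=x^3 q(x)$, so $0$ is an eigenvalue of algebraic multiplicity at least three, giving at least three real eigenvalues and contradicting $\mathcal{P}\in\Delta$.

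For (3), I again proceed by contradiction, assuming $D$ has no negative even cycle and that every pair of vertex-disjoint odd cycles of $D$ shares the same sign. If $D$ contains no odd cycle, every cycle is positive even, so every composite cycle is positive, contradicting (1). Otherwise, within any composite $n$-cycle $\Gamma$ the constituent odd cycles are pairwise vertex disjoint and hence share a common sign $\sigma_\Gamma$, giving $\sgn(\Gamma)=\sigma_\Gamma^{o(\Gamma)}$, where $o(\Gamma)$ denotes the number of odd cycles in $\Gamma$. When $n$ is even, the identity $o(\Gamma)\equiv n\pmod{2}$ forces $o(\Gamma)$ even, so every composite $n$-cycle is positive, contradicting (1). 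When $n$ is odd, the perturbation $B_\Gamma(\epsilon)$ defined as in~\eqref{e2} lies in $\mathcal{Q}(\mathcal{P})$ and has exactly $o(\Gamma)+2e_-(\Gamma)=o(\Gamma)$ real eigenvalues (with $e_-(\Gamma)=0$ by assumption), an odd number that cannot equal $2$, forcing no composite $n$-cycle to exist and contradicting sign non-singularity.

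For (4), I would use (1) to fix a positive composite $n$-cycle $\Gamma_+$ in $D$ and extract its cycle composition from the matrix $B_{\Gamma_+}(\epsilon)\in\mathcal{Q}(\mathcal{P})$. For $\epsilon>0$ sufficiently small the $n$ eigenvalues of $B_{\Gamma_+}(0)$ are algebraically simple and well separated by the scaling $10^p$, and each simple constituent cycle contributes a fixed number of real eigenvalues (one per odd cycle, two per negative even cycle, none per positive even cycle); so $B_{\Gamma_+}(\epsilon)$ has exactly $o(\Gamma_+)+2e_-(\Gamma_+)$ real eigenvalues, which must equal $2$ by the $\Delta$ hypothesis. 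Combined with the positivity condition $o_-(\Gamma_+)+e_-(\Gamma_+)\equiv 0\pmod 2$ (where $o_-(\Gamma_+)$ is the number of negative odd cycles of $\Gamma_+$) and the parity $o(\Gamma_+)\equiv n\pmod 2$, a case analysis for $n$ even eliminates $(o,e_-)=(0,1)$ (which fails the positivity parity check) and leaves $(o,e_-)=(2,0)$ with $o_-$ even, so $\Gamma_+$ contains exactly two vertex-disjoint odd cycles sharing a common sign, proving (4); for $n$ odd the claim is vacuous since $\Delta$ is empty whenever $n$ is odd. The principal obstacle throughout is the combinatorial-analytic interplay in (3) and (4): one must ensure, under the perturbation $B_\Gamma(\epsilon)$, that no pair $\lambda,-\lambda$ of real eigenvalues merges into a conjugate pair and no conjugate pair splits into real eigenvalues, a fact guaranteed by the algebraic simplicity and spacing of the eigenvalues of $B_\Gamma(0)$.
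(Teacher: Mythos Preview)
Your proof is correct and, for parts (1), (2), and (4), essentially matches the paper: (1) and (2) follow from the behaviour of the coefficients $E_n,E_{n-1},E_{n-2}$ exactly as you describe, and for (4) the paper likewise selects a \emph{positive} composite $n$-cycle from (1) and reads off its structure via Lemmas~\ref{l2} and~\ref{l3} together with the fact that $n$ is even, which is your eigenvalue-count $o+2e_-=2$ and parity analysis in slightly more compressed form.

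For (3) the paper takes a marginally different route: rather than your global contradiction-by-parity argument, it directly picks a \emph{negative} composite $n$-cycle $\Gamma$ (guaranteed by (1)) and invokes Lemmas~\ref{l2} and~\ref{l3} to conclude that $\Gamma$ itself contains either a negative even cycle or two odd cycles of opposite sign. Your argument---showing that under the negated hypothesis every composite $n$-cycle has sign $\sigma_\Gamma^{o(\Gamma)}=+$ when $n$ is even, contradicting (1)---is a clean alternative that avoids any perturbation in the main case. Your separate handling of $n$ odd is in fact unnecessary, since $\mathcal{P}\in\Delta$ already forces $n-2$ to be even (a point you yourself note in part (4)).
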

\begin{proof}
	\begin{itemize}
		\item[1.] Since $\mathcal{P}$ allows singularity (but is not sign singular), there exist two oppositely signed composite cycles of length $n$.
		
		\item[2.] Suppose that $D$ contains no composite cycle of length $n-1$. Since $\mathcal{P}$ allows singularity, there exists a singular matrix $B\in \mathcal{Q}(\mathcal{P})$. Therefore, the characteristic polynomial of $B$ is  
$$
\Ch_B(x) = x^2\big(x^{\,n-2} + E_1(B)x^{\,n-3} + \cdots + E_{n-2}(B)\big).
$$
If $D$ contains no composite cycle of length $n-2$, then $E_{n-2}(B)=0$, which implies that $0$ is an eigenvalue of $B$ with algebraic multiplicity $3$. This contradicts the assumption that $\mathcal{P}\in \Delta$.

		\item[3.] Since $\mathcal{P}$ allows singularity, it follows from part~(1) that $D$ contains a negatively signed composite cycle $\Gamma$ of length $n$.
By Lemmas~\ref{l2} and~\ref{l3}, $\Gamma$ contains either exactly one negative even cycle or exactly two nonzero odd cycles of opposite sign.

\item[4.] Since $\mathcal{P}$ allows singularity, it follows from part~(1) that $D$ contains a positively signed composite cycle $\Gamma$ of length $n$.
By Lemma~\ref{l2} and~\ref{l3}, $\Gamma$ must contain a nonzero odd cycle. Moreover, since $n$ is even, $\Gamma$ necessarily contains two vertex-disjoint odd cycles of the same sign. 

	\end{itemize}
\end{proof}
However, the following example shows that the converse of Theorem \ref{4.8l} is not true.
\begin{eg}\rm
 Let $\mathcal{P}\in \mathcal{Q}_n$ be a sign pattern, whose underlying signed directed graph is $D$  given in Fig.\ref{figx5.10}.
		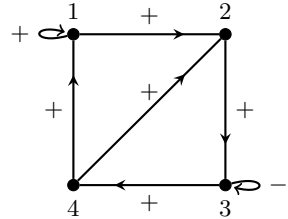
\begin{figure}[H]
\centering
\begin{minipage}{.45\textwidth}
\vspace{-1cm}
$$
\mathcal{P}=\begin{bmatrix}
		+&+&0&0\\0&0&+&0\\0&0&-&+\\+&+&0&0
	\end{bmatrix}
$$
\end{minipage}
\hspace{0.05\textwidth}
\begin{minipage}{.45\textwidth}
\centering
\usetikzlibrary{decorations.markings}

\tikzset{
  vertex/.style={
    draw, circle, fill=black,
    inner sep=0pt, minimum size=0.15cm
  },
  edge/.style={
    thick,
    postaction={
      decorate,
      decoration={
        markings,
        mark=at position 0.75 with {\arrow{stealth}}
      }
    }
  },
  loopedge/.style={thick, ->}
}

\begin{tikzpicture}[scale=1, every node/.style={font=\small}]

\node[vertex,label=above:$1$]  (v1) at (0,0)   {};
\node[vertex,label=above:$2$] (v2) at (2,0)   {};
\node[vertex,label=below:$3$] (v3) at (2,-2)  {};
\node[vertex,label=below:$4$] (v4) at (0,-2)  {};

\draw[edge] (v1) -- node[above] {$+$} (v2);
\draw[edge] (v2) -- node[right] {$+$} (v3);
\draw[edge] (v3) -- node[below] {$+$} (v4);
\draw[edge] (v4) -- node[left]  {$+$} (v1);
\draw[edge] (v4) -- node[above] {$+$} (v2);

\draw[loopedge] (v1) edge[loop left] node {$+$} (v1);
\draw[loopedge] (v3) edge[loop right] node {$-$} (v3);

\end{tikzpicture}
\caption{The signed directed graph $D$.}
\label{figx5.10}

\end{minipage}
\end{figure}
Then $(1,1)(2,3,4)$, $(1,2,3,4)$ are two oppositely signed composite cycles in $D$ of length $4$, so $\mathcal{P}$ allows singularity but is not sign singular. Also, $D$ contains the cycle $(2,3,4)$ of length $3$. Moreover, $(1,1)$, $(2,3,4)$ are two odd cycles in $D$ of the same sign, while $(1,1)$, $(3,3)$ are two odd cycles in $D$ of opposite sign. Therefore, $\mathcal{P}$ satisfies all the conditions stated in Theorem~\ref{4.8l}. However, $$B=\begin{bmatrix}
		7&6&0&0\\0&0&1&0\\0&0&-5&1\\2&4&0&0
	\end{bmatrix}\in \mathcal{Q}(\mathcal{P})$$ and $B\in \mathcal{Q}(\mathcal{P})$ has all real eigenvalues, hence $\mathcal{P}\notin \Delta$.
	
\end{eg}

Eschenbach \cite{1993a}, gives some necessary and sufficient conditions for sign nonsingular sign patterns in $\pi$. They use the following lemma to establish the same.

\begin{theorem}[Theorem 1.8, \cite{1993a}]\label{th4.5}
	If $\mathcal{A}$ is an $n\times n$ ($n$ odd) sign nonsingular matrix, then $\mathcal{A}\in \pi$ if and only if $\mathcal{A}$ satisfies the following: \begin{itemize}
		\item[1.] all even cycles in $\mathcal{A}$ are nonpositive;
		\item[2.] if $\mathcal{A}$ contains a nonzero odd cycle $\gamma=a_{i_1i_2}a_{i_2i_3}\cdots a_{i_ki_1}$, then $\mathcal{A}[\{i_1,i_2,...,i_k\}^c]$ contains no nonzero odd cycles;
		\item[3.] $\mathcal{A}$ contains a nonzero even principal matchings of length $l>k$, where $k$ is the length of the smallest nonzero odd cycle; and
		\item[4.] all nonzero odd cycles in $\mathcal{A}$ have the same sign.
	\end{itemize}
\end{theorem}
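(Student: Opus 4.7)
The plan is to analyze the characteristic polynomial $\Ch_B(x)=\sum_{k=0}^n (-1)^k E_k(B)\,x^{n-k}$ via Descartes' rule of signs. Because $\mathcal{A}$ is sign nonsingular, every $B\in\mathcal{Q}(\mathcal{A})$ has $E_n(B)\neq 0$, so zero is never an eigenvalue; with $n$ odd and nonreal eigenvalues appearing in conjugate pairs, the number $r(B)$ of real eigenvalues of $B$ is positive and odd. Consequently, if $V_+(x)+V_-(x)=1$ for every $B\in\mathcal{Q}(\mathcal{A})$ then $\mathcal{A}\in\pi$, while exhibiting a single $B\in\mathcal{Q}(\mathcal{A})$ with $r(B)\ge 3$ suffices to conclude $\mathcal{A}\notin\pi$. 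All cycle signs below follow the convention of \cite{1993a}. I would prove necessity by perturbation with the matrices $B_\gamma(\epsilon)$ and $B_\Gamma(\epsilon)$ of \eqref{e1}--\eqref{xe3}, and sufficiency by a sign analysis of the coefficient sequence $\{(-1)^kE_k(B)\}$.

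For necessity I would argue the contrapositive of each condition. If (1) fails via a positive even cycle $\gamma$ of length $k$, then $B_\gamma(0)$ has $+1$ and $-1$ as algebraically simple eigenvalues; $B_\gamma(\epsilon)\in\mathcal{Q}(\mathcal{A})$ keeps these two real, and the remaining $n-k$ small nonzero eigenvalues (nonzero by sign nonsingularity, odd in count) contribute at least one further real eigenvalue, yielding $r(B_\gamma(\epsilon))\ge 3$. If (2) fails via vertex-disjoint nonzero odd cycles $\gamma,\alpha$ of lengths $k,j$, set $\Gamma=\gamma\alpha$; then $B_\Gamma(0)$ has one real eigenvalue from each odd cycle, and the same parity argument (with $n-k-j$ odd) produces a third. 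If (4) fails via odd cycles $\gamma_1,\gamma_2$ of opposite sign, sign nonsingularity fixes $\sgn(\det B)$ on $\mathcal{Q}(\mathcal{A})$; if $\mathcal{A}\in\pi$, this common sign equals the sign of the unique real eigenvalue, contradicted by $B_{\gamma_1}(\epsilon)$ and $B_{\gamma_2}(\epsilon)$, whose principal real eigenvalues lie close to reals of opposite signs. If (3) fails, then $E_l(B)=0$ for every even $l\in(k,n]$, and combining this gap with the freedom left by (1), (2), (4) one constructs $B\in\mathcal{Q}(\mathcal{A})$ with $r(B)\ge 3$.

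For sufficiency, assume (1)--(4). By (2), every composite $l$-cycle decomposes into a matching of even cycles plus at most one nonzero odd cycle; by (1) and (4), each summand contributing to $E_l(B)$ then carries a determined sign, so $(-1)^l E_l(B)$ has a fixed sign (possibly zero) for every $l$. Condition (3) ensures that at least one even-length coefficient beyond position $k$ is nonzero, and sign nonsingularity pins down $\sgn\bigl((-1)^n E_n(B)\bigr)$. A direct tabulation of these signs shows that the nonzero entries of $\{(-1)^kE_k(B)\}_{k=0}^n$ admit exactly one sign change when read jointly against $x>0$ and $x<0$, so $V_+(x)+V_-(x)=1$, giving $\mathcal{A}\in\pi$. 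The main obstacle is this combinatorial bookkeeping: one must verify that the matching-plus-one-odd-cycle decomposition of composite cycles is exhaustive, rule out accidental cancellations within a single $E_l(B)$, and confirm that the unique sign transition in the coefficient sequence lands exactly where (3) and sign nonsingularity force it.
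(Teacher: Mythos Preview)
The paper does not prove this theorem at all: Theorem~\ref{th4.5} is quoted verbatim from Eschenbach~\cite{1993a} as background for the authors' own result (Theorem~\ref{4.9l}) on sign nonsingular patterns in $\Delta$, and no proof is supplied in the present paper. There is therefore nothing in the paper to compare your proposal against.

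As for your sketch on its own merits, the necessity arguments for (1), (2), and (4) are sound and follow the standard perturbation template used throughout the paper. Your treatment of (3), however, is not a proof: the sentence ``combining this gap with the freedom left by (1), (2), (4) one constructs $B\in\mathcal{Q}(\mathcal{A})$ with $r(B)\ge 3$'' does not indicate which $B$ or why three real eigenvalues must appear. For sufficiency, your Descartes' rule strategy is the natural one, and the decomposition claim (each composite cycle is a matching of even cycles plus at most one odd cycle) does follow from (2). But the assertion that ``a direct tabulation of these signs shows \ldots exactly one sign change'' hides the entire content of the argument; in particular you have not explained how condition (3) is used to guarantee that the single sign change actually occurs, nor have you addressed why coefficients $E_l(B)$ with contributions from both a pure matching and a matching-plus-odd-cycle cannot vanish or flip sign. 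These are exactly the points you flag as obstacles, and they remain unaddressed.
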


The following theorem gives some necessary conditions for a sign nonsingular sign pattern to be in $\Delta$.

\begin{theorem} \label{4.9l}
	Let $\mathcal{P}\in \mathcal{Q}_n$ be a sign nonsingular sign pattern, whose underlying signed directed graph is $D$. If $\mathcal{P}\in \Delta$, then the following conditions hold.
    \begin{itemize}
		\item[1.] All composite cycles of length $n$ in $D$ have the same sign.
		\item[2.] There is either a negative even cycle or at least two nonzero vertex disjoint odd cycles in $D$.
       
		\item[3.]  If $D$ has a negative even cycle, then $D$ cannot have two nonzero odd cycles of the same sign that are vertex disjoint.
        \item[4.] If $D$ has a composite cycle $\Gamma$ of length $n$ that contains an odd cycle $\gamma$, then every odd cycle in $D(V(\gamma))$ has the same sign as $\gamma$.

	\end{itemize} 
\end{theorem}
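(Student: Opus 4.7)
The plan is to verify the four necessary conditions one by one. Throughout, I note that $\mathcal{P}\in\Delta$ forces $n$ to be even, because in any $B\in\mathcal{Q}(\mathcal{P})$ the $n-2$ nonreal eigenvalues must pair up into complex conjugates. The main tools will be the combinatorial formula for $\det(B)$ as a sum over composite $n$-cycles, the eigenvalue placement provided by the matrices $B_\gamma(0),B_\Gamma(0)$ from \eqref{e1} and \eqref{xe3} together with their perturbations \eqref{e2}, and Lemmas \ref{l2} and \ref{l3}.

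The first condition is immediate from the definition of sign nonsingularity: the nonzero terms in the standard expansion of $\det(\mathcal{P})$ correspond exactly to the composite $n$-cycles in $D$, and these must all share a single sign. For the second condition, sign nonsingularity supplies at least one composite $n$-cycle $\Gamma$ in $D$, and by Lemma \ref{l3} $\Gamma$ cannot consist only of positive even cycles. If $\Gamma$ contains a negative even cycle the first alternative holds. Otherwise $\Gamma$ has an odd component, and since $n$ is even, $\Gamma$ must then contain an even number and hence at least two odd cycles, which are vertex disjoint by virtue of being distinct components of $\Gamma$.

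The third condition will be proved by contradiction. Assuming a negative even cycle $\gamma$ and two vertex-disjoint same-signed odd cycles $\alpha_1,\alpha_2$ both live in $D$, I would construct $B_\gamma(\epsilon)$ via \eqref{e1}-\eqref{e2} and $B_{\alpha_1\alpha_2}(\epsilon)$ via \eqref{xe3} (with the analogue of \eqref{e2}); both lie in $\mathcal{Q}(\mathcal{P})$ for $\epsilon>0$ small. Under $\mathcal{P}\in\Delta$, each of these matrices has exactly two real eigenvalues, and the remaining $n-2$ eigenvalues form complex conjugate pairs with positive product. The two real eigenvalues of $B_\gamma(\epsilon)$ are close to $+1$ and $-1$ (the real $k$-th roots of unity arising from the negative even $k$-cycle block), so $\det(B_\gamma(\epsilon))<0$; the two real eigenvalues of $B_{\alpha_1\alpha_2}(\epsilon)$ are close to $10^{p_1}$ and $10^{p_2}$ of matching sign (both positive or both negative, since $\alpha_1,\alpha_2$ share a sign and are odd), so $\det(B_{\alpha_1\alpha_2}(\epsilon))>0$. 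This sign disagreement contradicts sign nonsingularity of $\mathcal{P}$.

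For the fourth condition, let $\Gamma=\gamma\gamma_2\cdots\gamma_t$ be the given composite $n$-cycle, set $\Gamma_{\text{rest}}=\gamma_2\cdots\gamma_t$, and suppose $\gamma'$ is an odd cycle in $D(V(\gamma))$ with $\sgn(\gamma')\neq\sgn(\gamma)$. In the easy subcase $V(\gamma')=V(\gamma)$, the composite cycle $\Gamma'=\gamma'\Gamma_{\text{rest}}$ has length $n$ with $\sgn(\Gamma')=-\sgn(\Gamma)$, immediately violating the first condition and hence sign nonsingularity. The delicate subcase is $V(\gamma')\subsetneq V(\gamma)$, where swapping $\gamma$ for $\gamma'$ in $\Gamma$ requires first covering $V(\gamma)\setminus V(\gamma')$ by additional cycles from $D(V(\gamma))$. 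My plan here is to mimic the determinant-sign argument of the third condition: build a matrix in $\mathcal{Q}(\mathcal{P})$ in which $\gamma'$ is scaled up against perturbations of all remaining entries, so that the single real eigenvalue produced by the odd piece on $V(\gamma)$ flips sign between the $\gamma$-activated and $\gamma'$-activated matrices while the contribution of $\Gamma_{\text{rest}}$ is preserved, again forcing $\sgn(\det)$ to change. The main obstacle is precisely this subcase: one must control the eigenvalues contributed by the vertices $V(\gamma)\setminus V(\gamma')$, ensuring they do not inadvertently introduce extra real eigenvalues and that the count of exactly two real eigenvalues demanded by $\mathcal{P}\in\Delta$ is preserved, so that the $\sgn(\det)$ comparison remains meaningful.
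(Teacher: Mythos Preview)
Your arguments for parts 1--3 are essentially identical to the paper's, so those stand.

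For part 4, your split into subcases is unnecessary and your ``delicate subcase'' is where you lose the thread. The obstacle you flag --- controlling the eigenvalues coming from $V(\gamma)\setminus V(\gamma')$ so as not to spoil the count of real eigenvalues --- is not an obstacle at all: the hypothesis $\mathcal{P}\in\Delta$ \emph{guarantees} that every matrix in $\mathcal{Q}(\mathcal{P})$ has exactly two real eigenvalues, so once you have pinned down two real eigenvalues by perturbation from simple nonzero eigenvalues of some $B_\Gamma(0)$, you know automatically that all remaining eigenvalues (including those perturbed from zero) are nonreal. You are treating $\mathcal{P}\in\Delta$ as something to be verified rather than as a hypothesis to be exploited.

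The paper's route is much cleaner and avoids any need to cover $V(\gamma)\setminus V(\gamma')$. Since $l(\Gamma)=n$ is even and $\gamma$ is odd, the remaining components $\Gamma\setminus\gamma$ have odd total length, so $\Gamma$ contains another odd simple cycle $\gamma_2$, necessarily vertex disjoint from $\gamma$ and hence from $\gamma'\subseteq V(\gamma)$. Now just compare $B_{\gamma\gamma_2}(\epsilon)$ with $B_{\gamma'\gamma_2}(\epsilon)$, exactly as in part~3: each has precisely two real eigenvalues, one coming from each odd cycle, with sign matching the sign of that cycle. Since $\sgn(\gamma')=-\sgn(\gamma)$ while $\gamma_2$ is common to both, the two determinants have opposite signs, contradicting sign nonsingularity. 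No composite $n$-cycle, no coverage of leftover vertices, and no use of $\Gamma_{\text{rest}}$ beyond extracting $\gamma_2$ is needed.
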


\begin{proof}
	\begin{itemize}\item[1.] This is clearly true since $\mathcal{P}$ is sign nonsingular.
		
		\item[2.] Since $\mathcal{P}$ is sign nonsingular, there exists a composite cycle of length $n$, and since $\mathcal{P}\in \Delta$ by Lemma \ref{l3}, every composite cycle in $D$ of length $n$ must have a negative even cycle or two nonzero odd cycles which are vertex disjoint.

		\item[3.] Let $\gamma$ be a negative even cycle in $D$ and let $\gamma_1$, $\gamma_2$ be two nonzero vertex disjoint odd cycles of the same sign in $D$. Define the real matrix $B_\gamma(0)=[b_\gamma(0)_{ij}]$ as in \eqref{e1}, then the perturbed matrix $B_\gamma(\epsilon)=[b_\gamma(\epsilon)_{ij}]$, as defined in \eqref{e2}, is in $\mathcal{Q}(\mathcal{P})$ and has two nonzero real eigenvalues $\lambda$, $-\lambda$ close to $1,-1$ for some $\epsilon > 0$ sufficiently small. Since $\mathcal{P}\in \Delta$, so all the other eigenvalues of $B_\gamma(\epsilon)$ are nonreal and occur in conjugate pairs $\alpha_i$, $\bar{\alpha}_i$ for $i=1,2,...,\frac{n-2}{2}$. Hence, $$\det(B_\gamma(\epsilon))=\lambda  (-\lambda)  \alpha_1  \bar{\alpha}_1\cdots \alpha_{\frac{n-2}{2}}  \bar{\alpha}_{\frac{n-2}{2}}<0.$$
		
		Consider $\Gamma=\gamma_1\gamma_2$, and define the real matrix $B_{\Gamma}(0)=[b_{\Gamma}(0)_{ij}]$ as in \eqref{xe3}. 
         Then the perturbed matrix $B_{\Gamma}(\epsilon)$ as in \eqref{e2} with $\gamma$  replaced by $\Gamma$, has two nonzero real eigenvalues $\lambda_1$, $\lambda_2$ of the same sign close to $10$ or $-10$ and $10^2$ or $-10^2$ depending on the sign of $\gamma_1$, $\gamma_2$. Since $\mathcal{P}\in \Delta$ and $B_{\Gamma}(\epsilon)\in \mathcal{Q}(\mathcal{P})$, all the other eigenvalues of $B_{\Gamma}(\epsilon)$ are nonreal and occur in conjugate pairs. So,
		$$\det(B_{\Gamma}(\epsilon))=\lambda_1  \lambda_2  \alpha_1 \bar{\alpha}_1\cdots \alpha_{\frac{n-2}{2}} \bar{\alpha}_{\frac{n-2}{2}}>0.$$ 
		
		Since $\det(B_{\gamma}(\epsilon))<0$ and $\det(B_{\Gamma}(\epsilon))>0$, 
        which contradicts that $\mathcal{P}$ sign nonsingular.
		
	\item[4.] Assume that $D(V(\gamma))$ has an odd cycle $\gamma_1$ with sign opposite to $\gamma$. 
As $l(\Gamma)$ is even, there exists an odd cycle $\gamma_2$ in $D$ that is vertex disjoint with $\gamma$ and $\gamma_1$. Consider $\Gamma_1 = \gamma \gamma_2$ and $\Gamma_2 = \gamma_1 \gamma_2$. 
Since $\mathcal{P} \in \Delta$, by a similar argument as that in part~(3), there exist two real matrices $B_{\Gamma_1}(\epsilon), B_{\Gamma_2}(\epsilon) \in \mathcal{Q}(\mathcal{P})$ satisfying 
\[
\det(B_{\Gamma_1}(\epsilon)) = -\det(B_{\Gamma_2}(\epsilon)),
\]
which contradicts the assumption that $\mathcal{P}$ is sign nonsingular.

	\end{itemize}
\end{proof}	
However, the above conditions are not not sufficient for a sign pattern to be $2$-consistent.
\begin{eg}\rm
 Let $\mathcal{P}\in \mathcal{Q}_n$ be a sign pattern, whose underlying signed directed graph is $D$  given in Fig.\ref{figx5.17}.
		\begin{figure}[H]
\centering
\begin{minipage}{.45\textwidth}
\vspace{-1cm}
$$
\mathcal{P}=\begin{bmatrix}
		+&+&0&0\\0&0&+&0\\0&0&+&+\\-&+&0&0
	\end{bmatrix}
$$
\end{minipage}
\hspace{0.05\textwidth}
\begin{minipage}{.45\textwidth}
\centering 
\begin{tikzpicture}[scale=1, every node/.style={font=\small}]
\tikzset{
  vertex/.style={
    draw, circle, fill=black,
    inner sep=0pt, minimum size=0.15cm
  },
  edge/.style={
    thick,
    postaction={
      decorate,
      decoration={
        markings,
        mark=at position 0.75 with {\arrow{stealth}}
      }
    }
  },
  loopedge/.style={thick, ->} 
}

\node[vertex,label=above:$1$]  (v1) at (0,0)   {};
\node[vertex,label=above:$2$] (v2) at (2,0)   {};
\node[vertex,label=below:$3$] (v3) at (2,-2)  {};
\node[vertex,label=below:$4$] (v4) at (0,-2)  {};

\draw[edge] (v1) -- node[above] {$+$} (v2);
\draw[edge] (v2) -- node[right] {$+$} (v3);
\draw[edge] (v3) -- node[below] {$+$} (v4);
\draw[edge] (v4) -- node[left]  {$-$} (v1);
\draw[edge] (v4) -- node[above] {$+$} (v2);

\draw[loopedge] (v1) edge[loop left] node {$+$} (v1);
\draw[loopedge] (v3) edge[loop right] node {$+$} (v3);

\end{tikzpicture}
\caption{The signed directed graph $D$.}
\label{figx5.17}
\end{minipage}
\end{figure}
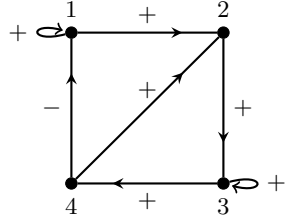 Clearly $\mathcal{P}$ is sign nonsingular. 
  Since all the composite $4$-cycles are positive. Note that $\mathcal{P}$ contains $2$ vertex disjoint odd cycles $(1,1), (2,3,4)$, and no negative even cycle. Also, $(1,1), (2,3,4)$ are only odd cycles which are contain a composite $4$ cycle, where $D((2,3,4))$ has a positive loop which is of the same sign as $(2,3,4)$.  So $\mathcal{P}$ satisfies all the conditions stated in Theorem \ref{4.9l}. However, the eigenvalues of 
	$$B=\begin{bmatrix}
		3&6&0&0\\0&0&1&0\\0&0&1&1\\-2&4&0&0
	\end{bmatrix}\in \mathcal{Q}(\mathcal{P})$$ are all purely imaginary, so $\mathcal{P}\notin\Delta$. 
\end{eg}

\section{Conclusion}\label{s51}

In this article, we investigated the consistency of sign pattern matrices. In particular, Theorem~\ref{th3.1} shows that if $\mathcal{P}$ is an irreducible, tridiagonal sign pattern with a $0$-diagonal of order at most $5$, then $\mathcal{P}$ is consistent if and only if it requires a unique inertia. This motivates the following conjecture.

\textbf{Conjecture.} If $\mathcal{P}$ is an irreducible, tridiagonal sign pattern with a $0$-diagonal, then $\mathcal{P}$ is consistent if and only if $\mathcal{P}$ requires a unique inertia.

In Theorems~\ref{th3.51n} and~\ref{th3.5n}, we show that if $\mathcal{P}$ is an irreducible, tridiagonal sign pattern with a $0$-diagonal such that the underlying signed undirected graph has a maximal signed path of length $1$ or $3$ not containing any leaf of $G$, then $\mathcal{P}$ is not consistent. We believe that the result can be extended to any maximal sign path not containing a leaf. Moreover, for such patterns if the only maximal signed odd length path includes a leaf then whether it is consistent remains an open question.

In Section~\ref{s5}, we obtain necessary conditions for irreducible sign patterns to require exactly two real eigenvalues. This naturally leads to the problem of characterizing this class $\Delta$, which provides scope for further research. 

\section{Acknowledgement} The research work of Partha Rana was supported by the Council of Scientific and Industrial Research (CSIR), India (File Number 09/731(0186)/2021-EMR-I).

	
\end{document}